\setlist[enumerate,1]{label = (\alph*),ref = (\alph*)}
\tikzset{every node/.style={draw, circle, inner sep=2pt},
every picture/.append style={thick,scale=0.8},
every label/.style={draw=none, rectangle}}
\newtheorem{proposition}{Proposition}[section]
\newtheorem{theorem}[proposition]{Theorem}
\newtheorem{corollary}[proposition]{Corollary}
\newtheorem{lemma}[proposition]{Lemma}
\theoremstyle{definition}
\newtheorem{example}[proposition]{Example}
\newtheorem{remark}[proposition]{Remark}
\newtheorem{definition}[proposition]{Definition}
\newcommand{\npmatrix}[1]{\left( \begin{matrix} #1 \end{matrix} \right)}
\newcommand{\R}{\mathbb{R}}
\newcommand{\bR}{\mathbb{R}}
\newcommand{\bN}{\mathbb{N}}
\let\C\relax
\newcommand{\+}{\phantom{+}}
\newcommand{\C}{\mathbb{C}}
\newcommand{\mc}{\mathcal}
\DeclareMathOperator{\iso}{iso}
\newcommand{\isa}[1]{\iota(#1)}
\DeclareMathOperator{\rk}{rank}
\DeclareMathOperator{\tr}{tr}  
\DeclareMathOperator{\mlt}{mult}
\DeclareMathOperator{\rank}{rank}
\DeclareMathOperator{\diag}{diag}
\DeclareMathOperator{\diam}{diam}
\newcommand{\1}{{\bf 1}}
\newcommand{\e}{{\bf e}}
\newcommand{\bbb}{{\bf b}}
\newcommand{\uu}{{\bf u}}
\newcommand{\ww}{{\bf w}}
\newcommand{\xx}{{\bf x}}
\newcommand{\yy}{{\bf y}}
\newcommand{\tuple}{\mathbf}
\newcommand{\m}{\tuple{m}}
\newcommand{\n}{\tuple{n}}
\newcommand{\p}{\tuple{p}}
\newcommand{\vv}{\tuple{v}}
\newcommand{\kx}[1]{K_{#1}}
\newcommand{\km}{\kx{\m}}
\newcommand{\kn}{\kx{\n}}
\newcommand{\trans}{^\top}
    \definecolor{helena}{rgb}{.2,.8,.4}
    \definecolor{polona}{rgb}{.2,.2,.8}
    \definecolor{rupert}{rgb}{0,.5,.5}
   \definecolor{todo}{rgb}{.8,.2,.2}
   \def\MR#1{}
\begin{document}
\title[Orthogonal symmetric matrices and joins of graphs]
{Orthogonal symmetric matrices \\
and joins of graphs}
\author{Rupert H. Levene, Polona Oblak, Helena \v Smigoc}
\address[R.~H.~Levene and H.~\v Smigoc]{School of Mathematics and Statistics, University College Dublin, Belfield, Dublin 4, Ireland}
\email{rupert.levene@ucd.ie}\email{helena.smigoc@ucd.ie}
\address[P.~Oblak]{Faculty of Computer and Information Science, University of Ljubljana, Ve\v cna pot 113, SI-1000 Ljubljana, Slovenia}
\email{polona.oblak@fri.uni-lj.si}

\subjclass[2010]{15B10,  15B57, 15A18, 05C50}
 \keywords{Orthogonal symmetric matrix; Join of graphs; Inverse eigenvalue problem; Minimal number of distinct eigenvalues}
\bigskip

\begin{abstract}
  We introduce a notion of compatibility for multiplicity matrices. This gives rise to a necessary condition for the join of two (possibly disconnected) graphs $G$ and $H$ to be the pattern of an orthogonal symmetric matrix, or equivalently, for the minimum number of distinct eigenvalues $q$ of $G\vee H$ to be equal to two. 
Under additional hypotheses, we show that this necessary condition is also sufficient.
  As an application, we prove that $q(G\vee H)$ is either two or three when $G$ and $H$ are unions of complete graphs, and we characterise when each case occurs.

\end{abstract}

\maketitle

\section{Introduction}

\subsection{Background and related work}
Orthogonal matrices, ubiquitous in  matrix theory and central in applications, have been widely studied. However, some basic questions on their combinatorial structure remain open \cite{MR2002914,MR4057549}. 
In this paper we advance our understanding of the zero-nonzero patterns of symmetric orthogonal matrices, drawing motivation from the Inverse Eigenvalue Problem for Graphs, a more general problem on the interplay between spectral and structural properties of a matrix. 


Let $G$ be a simple graph with vertex set $V(G)=\{1,\dots,n\}$ and edge set~$E(G)$, and consider $S(G)$, the set of all real symmetric $n \times n$ matrices $A=(a_{ij})$ such that, for $i \neq j$, $a_{ij} \neq 0$ if and only if $\{i,j\} \in E(G)$, with no restriction on the diagonal entries of $A$. The \emph{Inverse Eigenvalue Problem for Graphs (IEP-G)} seeks to characterise all possible sets of eigenvalues of matrices in $S(G)$. The IEP-G is unsolved for all except a few families of graphs, and it has motivated the study of several related parameters. One widely studied parameter is the \emph{minimum number of distinct eigenvalues of a graph},
namely $q(G) = \min\{q(A)\colon A \in S(G)\}$,
where $q(A)$ denotes the number of distinct eigenvalues of a square matrix~$A$.  For a graph $G$ the set $S(G)$ contains an orthogonal symmetric matrix (or $G$ is \emph{is realisable by} an orthogonal symmetric matrix) if and only if $q(G)=2$.

The study of the minimum number of distinct eigenvalues of a graph was initiated by Leal-Duarte and Johnson in \cite{MR1899084}, where they proved that if $T$ is a tree, then $q(T)$ cannot be smaller than $\diam(T)+1$; this bound was later improved for infinitely many trees (see e.g.,~\cite{MR2735867}).
Determining $q$ for many graphs with cycles seems to be a difficult problem. It is well known that $q(G)=n$ if and only if $G$ is a path \cite{MR0364288}, and graphs with $q(G)=n-1$ were characterised in \cite{MR3665573} using strong spectral properties. At the other extreme, while it is clear that $q(G)=1$ if and only if $E(G)=\emptyset$, there is no known characterisation of graphs with $q(G)=2$, i.e., the graphs that are realisable by an orthogonal symmetric matrix.

Several families of graphs are known to have a realisation with an orthogonal symmetric matrix 
(see, e.g.,~\cite{MR3904092,MR3118943,MR2470115,MR3891770}), 
 and several necessary conditions 
were determined by Adm et.~al.~\cite{MR3118943,corrigendum}. The same authors also proved that $q(G \vee G)=2$  for any connected graph $G$. Later, this result was generalized by Monfared and Shader \cite{MR3506498}, who proved that $q(G \vee H)=2$ for any connected graphs $G$ and $H$ with the same number of vertices. Recently, joins of disconnected graphs were investigated in \cite{MR4044603,corrigendum}, where particular attention was given to joins of unions of complete graphs.

\subsection{Overview}
Results in this paper contribute to the IEP-G, to the study of combinatorial structure of orthogonal matrices, and also shed light on certain completion problems for orthogonal matrices. 

The main framework is developed in Section~\ref{sec:compatibility}, where we present a necessary condition for the join of two (possibly disconnected) graphs, $G \vee H$, to have $q(G \vee H)=2$, as well as a closely related sufficient condition. Section~\ref{sec:examples} contains remarks, extensions of previous results and initial examples derived from results in Section~\ref{sec:compatibility}.  Finally, in Section~\ref{sec:complete} we apply our results to determine $q(G \vee H)$ whenever $G$ and $H$ are unions of complete graphs. As a preview of our results, we state the main theorem of Section~\ref{sec:complete} below. 
%
%

\begin{theorem}\label{thm:main}
  Let $G=G_1\cup\dots\cup G_k$ and $H=H_1\cup\dots\cup H_\ell$ where $G_1,\dots,G_k$ and $H_1,\dots,H_\ell$ are connected graphs 
  and $k\le \ell$. By $\iso(G)$ we denote the set of isolated vertices of $G$. If at least one of the following three conditions holds, then $G \vee H$ is not realisable by an orthogonal symmetric matrix (i.e.,~$q(G\vee H)\ne 2$).
\begin{enumerate}
\item $|G|<\ell$;
 \item $\iso(G)\ne \emptyset$  and $k+\ell>|G|+|\iso(G)|$;
\item $\iso(G)=\emptyset$ and $\iso(H)\ne \emptyset$, and $k<\ell<2k$ and $|H|<2k$ and $|G|<k+\ell$.
\end{enumerate}
Moreover, if the connected components of $G$ and $H$ are all complete graphs, then
\[q(G\vee H)=
  \begin{cases}
    2&\text{if none of conditions (a), (b), (c) is satisfied,}\\
    3&\text{if at least one of (a), (b), (c) is satisfied.}
  \end{cases}\]
\end{theorem}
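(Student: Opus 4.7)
The plan is to reduce the first part of the theorem to the necessary condition of Section~\ref{sec:compatibility}. If $q(G\vee H)=2$, then there exists $A\in S(G\vee H)$ with exactly two distinct eigenvalues; writing $A$ in block form with respect to the partition $V(G)\sqcup V(H)$ yields a matrix in $S(G)$ and one in $S(H)$ whose multiplicity data must be compatible in the sense of Section~\ref{sec:compatibility}. I would then verify, case by case, that each of (a), (b), (c) obstructs the existence of any such compatible pair on purely combinatorial grounds. Condition (a) is a straight pigeonhole: when $|G|<\ell$ there are too few vertices in $G$ to host the number of eigenvalue multiplicities forced by the $\ell$ components of $H$. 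Condition (b) refines this: isolated vertices of $G$ can only contribute to multiplicities in a restricted way, and the inequality $k+\ell>|G|+|\iso(G)|$ violates the resulting counting bound. Condition (c) is a joint constraint that combines the multiplicity bounds arising from isolated vertices of $H$ with those coming from the component structure of $G$.

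For the second part, assume every component of $G$ and of $H$ is a complete graph. If none of (a), (b), (c) holds, I would explicitly build a pair of compatible multiplicity matrices (of the shape permitted by unions of cliques) and apply the sufficiency criterion of Section~\ref{sec:compatibility} to conclude $q(G\vee H)=2$. The construction naturally splits into the cases $\iso(G)=\iso(H)=\emptyset$, $\iso(G)\ne\emptyset$, and $\iso(G)=\emptyset\ne\iso(H)$; in each case the negations of (a), (b), (c) give exactly the inequalities needed to distribute multiplicities among the components. When at least one of (a), (b), (c) holds, Part~1 gives $q(G\vee H)\ne 2$, and I would establish the matching upper bound $q(G\vee H)\le 3$ by exhibiting an explicit $A\in S(G\vee H)$ with three distinct eigenvalues. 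A natural template is to put $\alpha_i I+\beta_i J$ on each clique block (whose spectrum on $K_m$ is $\{\alpha_i,\alpha_i+m\beta_i\}$) and an all-nonzero rank-one block off the diagonal, then tune the parameters so that the resulting spectrum collapses to three values; the spectrum can be read off by reducing the pencil $A-\lambda I$ to a small matrix whose size depends only on $k+\ell$, and choosing parameters to align eigenvalues accordingly.

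The main obstacle is the sufficiency step for complete-graph components: the three failure conditions create a fairly intricate partition of the parameter space $(k,\ell,|G|,|H|,|\iso(G)|,|\iso(H)|)$, and one must produce, in each remaining region, an explicit compatible pair meeting every requirement of the Section~\ref{sec:compatibility} criterion. A secondary technical point is verifying that the three-eigenvalue construction is valid in every configuration where one of (a), (b), (c) is satisfied; I expect this to reduce, after block-diagonalising with respect to the constant-on-each-clique subspace, to a characteristic polynomial computation on a matrix whose order is $k+\ell$, so the bound $q(G\vee H)\le 3$ should follow uniformly from one family of constructions rather than a separate argument per case.
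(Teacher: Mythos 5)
Your overall strategy coincides with the paper's: necessity of $q=2$ is obtained from the compatibility obstruction of Theorem~\ref{thm:q=2} together with purely combinatorial counting on multiplicity matrices, and sufficiency (for unions of cliques) by exhibiting explicit compatible pairs and invoking Theorem~\ref{thm:compatible sane} via the generic realisability of complete graphs. Two points of comparison are worth making. First, the paper does not argue directly on $G$ and $H$ in the necessity step; it observes (Proposition~\ref{prop:mult-complete} and Corollary~\ref{cor:K-to-connected-q=3}) that any multiplicity matrix for $G$ is automatically one for $K_{\m}$ with $\m$ the vector of component orders, so the whole theorem reduces to the complete-graph classification of Theorem~\ref{thm:general}; your direct approach amounts to the same counting, since the only facts used are that columns sum to component orders and that components of order at least two contribute at least two nonzero entries. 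Second, and this is the genuinely different route: for the upper bound $q(K_{\m}\vee K_{\n})\le 3$ the paper applies the clique-substitution monotonicity of Lemma~\ref{lem:jdup} a total of $k+\ell$ times to reduce to $q(K_{k,\ell})\le 3$, whereas you propose a direct construction with blocks $\alpha_i I+\beta_i J$ and block-constant off-diagonal entries, compressed onto the clique-constant subspace. This works, but note that (i) after compression you still need a nowhere-zero realisation of $K_{k,\ell}$ with three eigenvalues (or an equivalent computation), so you are re-deriving rather than avoiding that input, and (ii) you must arrange that the leftover eigenvalues $\alpha_i$ and $\gamma_j$ coming from the orthogonal complements of the clique-constant vectors coincide with two of the three target eigenvalues of the compressed matrix; neither issue is fatal. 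The remaining caveat is that the bulk of the actual work --- the three counting obstructions for (a), (b), (c) and the region-by-region construction of compatible pairs (Propositions~\ref{prop:comp vs vertices}, \ref{prop:q2-compatible matrices}, \ref{prop:complete no iso}, \ref{prop:q2-iso}, \ref{prop:G yes H no} and~\ref{prop:G no H yes} in the paper) --- is deferred in your sketch; the plan is sound, but none of those cases is yet carried out.
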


\subsection{Notation}

 We denote by $\bN=\{1,2,\dots\}$ and $\bN_0=\{0\}\cup \bN$ the sets of positive and nonnegative integers. Moreover, let  $[n]=\{1,2,\dots,n\}$ and $m+[n]=\{m+1,m+2,\dots,m+n\}$.

 For any set $\mathbb{S}$, we denote by $\mathbb{S}^{m\times n}$ the set of $m\times n$ matrices with entries in $\mathbb{S}$, and let $\mathbb{S}^m$ denote $\mathbb{S}^{m\times 1}$ or $\mathbb{S}^{1\times m}$, depending on the context. For a matrix $X\in \R^{m \times n}$, the transpose of $X$ is denoted by $X\trans$, and we write $X>0$ 
 if every entry of $X$ is greater than~$0$.  Similarly, for matrices $X=(x_{ij})$ and $Y=(y_{ij})$ in $\R^{m \times n}$, we write $X \geq Y$ if $x_{ij} \geq y_{ij}$ for all $i \in [m]$, $j \in [n]$.
 We say that a matrix $X$ is nowhere-zero if no entry of $X$ is zero. We write $X\ne 0$ if at least one entry of $X$ is nonzero.

The following notation is used for special vectors and matrices: $\1_m$ is the column vector of ones in $\bR^{m\times 1}$, $\e_i:=\npmatrix{0,\dots,0,1,0,\dots,0}\trans $ is the vector with the $1$ in the $i$th entry and zeros elsewhere, $I_k$ is the $k \times k$ identity matrix, $0_{m \times n}$ is the zero $m \times n$ matrix, and we also write $0_m:=0_{m\times m}$ and $\mathbf{0}_m:=0_{m\times 1}$. (We allow any of $k,m,n$ to be zero, in which case the corresponding matrix is empty.)

If $X\in \R^{m \times n}$ and $\mc R\subseteq [m]$,  $\mc C\subseteq [n]$, then  $X[\mc R,\mc C]$ is the submatrix of $X$ with rows $\mc R$ and columns $\mc C$. Let $\bigoplus_{i \in [r]} A_i$ denote the direct sum of matrices $A_1,\ldots,A_r$, where, for technical reasons, we allow for the possibility that $A_i$ is an empty matrix. We denote the the diagonal matrix with diagonal entries $\Lambda=(\lambda_1,\dots,\lambda_r)\in \bR^r$ occurring with multiplicities $\vv=(v_1,\ldots,v_r)\trans\in \bN_0^r$ by 
$ D_{\Lambda,\vv}:=\bigoplus_{i\in [r]}\lambda_i I_{v_i}$. Since $\Lambda$ lists the eigenvalues of this matrix (ignoring multiplicities), we sometimes refer to $\Lambda$ as an eigenvalue list.
In the case $\vv=\1_r$ (i.e., all multiplicities are equal to one), we abbreviate this diagonal matrix as $\diag(\lambda_1,\dots,\lambda_r)$.

 Let $\sigma(A)$ denote the multiset of eigenvalues of a square matrix $A$, counted with algebraic multiplicities, and let  $q(A)$ denote  the number of distinct eigenvalues of $A$. By  $\mlt(\lambda,A)\in \bN_0$ we denote the multiplicity of a real number $\lambda$ in  $\sigma(A)$.

In this paper, all graphs are simple undirected graphs with a non-empty vertex sets. For a graph $G=(V(G),E(G))$, the \emph{order} of $G$ is denoted by $|G|:=|V(G)|$. A \emph{connected component} of $G$ is a maximal subgraph of $G$ in which any two vertices are connected via a path. The set of \emph{isolated vertices} of $G$, i.e.,~the set of vertices of degree zero, is denoted by  $\iso(G)$.
The \emph{join} $G \vee H$ of two graphs $G$ and $H$ is the disjoint graph union $G \cup H$ together with all the possible edges joining the vertices in $G$ to the vertices in $H$. We abbreviate the disjoint graph union of $k$ copies of the same graph $G$ by $kG:=G\cup\dots\cup G$. 
We write $P_n$, $C_n$ and $K_n$ for the \emph{path}, the \emph{cycle} and the \emph{complete graph} on $n$ vertices, respectively,
and we denote the \emph{complete bipartite graph} on two disjoint sets of cardinalities $m$ and $n$ by $K_{m,n}:=mK_1 \vee nK_1$.

Recall that for a graph $G$ of order $n$, with $V(G)$ identified with $[n]$, we write
$$S(G):=\{A=(a_{ij}) \in \R^{n \times n}\colon \; a_{ij}\ne 0 \iff \{i,j\} \in E(G) \text{ for } i \ne j \}$$
and
$q(G) := \min\{q(A)\colon A \in S(G)\}$.
For $A\in S(G)$, the matrix $A[H]$ is the principal submatrix of $A$ whose rows and columns are the vertices of a subgraph $H\subseteq G$.

\section{Compatible multiplicity matrices}\label{sec:compatibility}


Below we introduce the notion of compatible multiplicity matrices, which will give a necessary condition for $q(G \vee H)=2$.

\begin{definition}\label{def:mult-vector}
  Let $G$ be a connected graph, $n:=|G|$ and $r \in \bN$. We call a vector $\vv=\npmatrix{v_1 & v_2&\ldots &v_{r} }\trans  \in \bN_0^{r}$ 
  \emph{a multiplicity vector} for $G$ if $\vv$ is an ordered multiplicity list that can be realised by a matrix in $S(G)$. In other words,  $\sum_{i=1}^r v_i=n$ and there is an eigenvalue list $\Lambda=(\lambda_1,\dots,\lambda_r)\in \bR^r$ 
  with $\lambda_i < \lambda_{i+1}$ for $i\in [r-1]$, and an orthogonal matrix $U\in \R^{n \times n}$, so that $UD_{\Lambda,\vv} U\trans \in S(G)$.
%
\end{definition}


\begin{definition}
  Let $G$ be a graph and $r,k\in\bN$. We say that a matrix $V\in \bN_0^{r \times k}$ is a \emph{multiplicity matrix} for $G$ if $G$ has $k$ connected components $G_1,\dots,G_k$, and for $1\le i\le k$, the $i$th column of $V$ is a multiplicity vector for $G_i$.
\end{definition}

We note for future reference that $|G|=\1_r\trans V\1_k$ whenever $V$ is an $r\times k$ multiplicity matrix for a graph~$G$.





\begin{definition}
  For a matrix $X$ with at least $3$ rows, we write $\widetilde X$ for the matrix obtained by deleting the first row and the last row of $X$. Let $r,k,\ell\in \bN$ with $r\ge3$. Two matrices $V \in \bN_0^{r \times k}$ and $W \in \bN_0^{r \times \ell}$ are said to be \emph{compatible}
  if $\widetilde V\1_k=\widetilde W\1_\ell$ and $\widetilde V\trans \widetilde W>0$. We say that two graphs $G,H$ \emph{have compatible multiplicity matrices} if there exist compatible matrices $V,W$ where $V$ is a multiplicity matrix for $G$ and $W$ is a multiplicity matrix for $H$.
\end{definition}

In Theorem \ref{thm:q=2} we will show that compatibility of multiplicity matrices is a necessary condition for the join of two graphs to have $q=2$. For this, we use the following lemma on the Sylvester equation (see, e.g.,~\cite{MR2978290}), whose simple proof we omit. 

\begin{lemma}\label{lem:Sylv}
Suppose $A \in \R^{n \times n}$, $B \in \R^{n \times m}$ and $C \in \R^{m \times m}$ satisfy the equation $AB-BC=0$. If $\vv$ is an eigenvector of $C$ corresponding to the eigenvalue $\lambda$, then $B\vv$ is either equal to zero, or is an eigenvector of $A$ corresponding to the same eigenvalue, $\lambda$. In particular, if $B$ has trivial kernel, then $\sigma(C)\subseteq\sigma(A)$ and hence $m\le n$.
\end{lemma}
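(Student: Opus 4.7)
The approach is a direct computation. Rewrite the hypothesis $AB - BC = 0$ as $AB = BC$, and apply both sides to an eigenvector $\vv$ of $C$ with eigenvalue $\lambda$:
\[ A(B\vv) = (AB)\vv = (BC)\vv = B(\lambda \vv) = \lambda (B\vv). \]
Either $B\vv = 0$, or $B\vv$ is a nonzero vector satisfying $A(B\vv) = \lambda(B\vv)$, which is exactly the dichotomy in the first assertion.

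For the second assertion, assume $B$ has trivial kernel, so that $B$ viewed as a linear map $\R^m \to \R^n$ is injective; this alone forces $m \le n$. Moreover, for any $\lambda \in \sigma(C)$ with corresponding (by definition nonzero) eigenvector $\vv$, injectivity of $B$ gives $B\vv \ne 0$, and then the first part yields $\lambda \in \sigma(A)$. Hence $\sigma(C) \subseteq \sigma(A)$, at least as sets of eigenvalues.

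There is no genuine obstacle in any of this, which is precisely why the paper flags the proof as omitted. The one minor subtlety worth noting is how strictly to read $\sigma(C) \subseteq \sigma(A)$: if one insists on the containment as multisets (with algebraic multiplicities), the same idea extends by applying $B$ to a Jordan chain $\vv_0,\dots,\vv_k$ for $C$ at $\lambda$. An easy induction using
\[ (A - \lambda I)B\vv_j = B(C - \lambda I)\vv_j = B\vv_{j-1} \]
shows that $B$ sends this chain to a Jordan chain for $A$ at $\lambda$, and injectivity of $B$ preserves linear independence, upgrading the containment to a comparison of algebraic multiplicities. In the applications appearing later in the paper the matrices in question will be symmetric (hence diagonalisable), so even this mild refinement is unnecessary.
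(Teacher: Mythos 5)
Your proof is correct and is exactly the simple direct computation the authors had in mind when they wrote that the proof is omitted: apply $AB=BC$ to an eigenvector, then use injectivity of $B$ for the second assertion. Your remark on reading $\sigma(C)\subseteq\sigma(A)$ as a multiset containment is a sensible precaution, and as you note it is moot in the paper's applications, where the matrices involved are diagonal.
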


\begin{theorem}\label{thm:q=2}
Let $G$ and $H$ be two graphs. If $q(G\vee H)=2$, then $G$ and $H$ have compatible multiplicity matrices.
\end{theorem}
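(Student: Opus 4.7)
The plan is to fix any $M\in S(G\vee H)$ with $q(M)=2$, write it in block form adapted to the connected components of $G$ and $H$, and read off compatible multiplicity matrices from the spectral data of the diagonal blocks.

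Let $\alpha<\beta$ be the two eigenvalues of $M$, set $\gamma:=\alpha+\beta$, and write
\[
M=\begin{pmatrix} A & B \\ B\trans & C \end{pmatrix}
\]
with $A=A_1\oplus\cdots\oplus A_k\in S(G)$ (where $A_i\in S(G_i)$), $C=C_1\oplus\cdots\oplus C_\ell\in S(H)$ (where $C_j\in S(H_j)$), and $B$ nowhere-zero. Expanding $(M-\alpha I)(M-\beta I)=0$ blockwise, the $(1,1)$ block yields $BB\trans=(\beta I-A)(A-\alpha I)$, which forces $\sigma(A)\subseteq[\alpha,\beta]$; since this polynomial in $A$ vanishes on the $\alpha$- and $\beta$-eigenspaces of $A$, it also forces $B\trans$ to annihilate those eigenspaces. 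The $(1,2)$ block yields the Sylvester-type identity $A_iB_{ij}=B_{ij}(\gamma I-C_j)$ for each block $B_{ij}$ of $B$. Lemma~\ref{lem:Sylv} then shows that if $\vv$ is an eigenvector of $C_j$ with eigenvalue $\mu$, then $B_{ij}\vv$ is either zero or an eigenvector of $A_i$ with eigenvalue $\gamma-\mu$; equivalently, via spectral projections, the expansion of $B_{ij}$ has nonzero pieces only at eigenvalue pairs $(\lambda,\mu)\in\sigma(A_i)\times\sigma(C_j)$ with $\lambda+\mu=\gamma$ and, by the previous step, $\lambda,\mu\in(\alpha,\beta)$.

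Next, $B$ restricts to an injective map from the $\mu$-eigenspace of $C$ into the $(\gamma-\mu)$-eigenspace of $A$ for every $\mu\in(\alpha,\beta)$ (since $B\trans B$ acts there as the nonzero scalar $(\beta-\mu)(\mu-\alpha)$); this gives $\mlt(\mu,C)\le\mlt(\gamma-\mu,A)$, and the reverse inequality follows by symmetry. I would then introduce a common strictly increasing eigenvalue list $\Theta=(\theta_1,\dots,\theta_r)$ with $\theta_1=\alpha$, $\theta_r=\beta$, and middle entries enumerating $\sigma(A)\cap(\alpha,\beta)=\gamma-(\sigma(C)\cap(\alpha,\beta))$; note $r\geq 3$, since $B\ne 0$ forces $\sigma(A)\cap(\alpha,\beta)\ne\emptyset$. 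Define $V\in\bN_0^{r\times k}$ by $V_{ti}:=\mlt(\theta_t,A_i)$, realized by $A_i$ with list $\Theta$, and $W\in\bN_0^{r\times\ell}$ by $W_{tj}:=\mlt(\gamma-\theta_t,C_j)$, realized by $\gamma I-C_j\in S(H_j)$ with list $\Theta$; both are then valid multiplicity matrices.

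Finally, I would verify the two compatibility conditions. For the middle row sums, $(\widetilde V\1_k)_t=\mlt(\theta_t,A)$ equals $(\widetilde W\1_\ell)_t=\mlt(\gamma-\theta_t,C)$ for each $t\in\{2,\dots,r-1\}$ by the pairing identity above. For $\widetilde V\trans\widetilde W>0$, the block decomposition of $B_{ij}$ forces some pair $(\lambda,\gamma-\lambda)$ with $\lambda\in\sigma(A_i)\cap(\alpha,\beta)$ and $\gamma-\lambda\in\sigma(C_j)\cap(\alpha,\beta)$ whenever $B_{ij}\ne 0$; since $B$ is nowhere-zero, every $B_{ij}$ is nonzero and contributes a strictly positive $(i,j)$ entry. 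The main obstacle I anticipate is careful bookkeeping at the edges: checking that extreme-eigenspace contributions to $B$ must vanish, that $r\geq 3$, and that the strictly-increasing-list convention forces the ``reflected'' realizer $\gamma I-C_j$ in the construction of $W$.
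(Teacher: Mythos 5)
Your argument is correct and follows essentially the same route as the paper's proof: both extract the Sylvester intertwining relation from the off-diagonal block, use positive semidefiniteness of $BB\trans$ and $B\trans B$ (in the paper, the diagonalised and truncated form $F\trans F=I-D_H^2$) to annihilate the extreme eigenspaces and to see that $B$ is injective on the interior eigenspaces, and then exploit the nowhere-zero property of $B$ blockwise to conclude $\widetilde V\trans \widetilde W>0$. The only cosmetic difference is that the paper first normalises to an orthogonal symmetric matrix with diagonal blocks $A$ and $-C$, so that the eigenvalue pairing $\mu\mapsto\gamma-\mu$ you carry through explicitly becomes the identity after negating the lower-right block.
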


\begin{proof}
Decompose $G$ and $H$ into their connected components as $G=\bigcup_{i\in [k]} G_i$ and $H=\bigcup_{j\in [\ell]} H_j$. Since $q(G\vee H)=2$,  there is an orthogonal symmetric matrix $$X:=
\npmatrix{
A & B \\ B\trans  & -C
}\in S(G \vee H)$$ 
where $A:=\bigoplus_{i\in [k]} A_i$ with $A_i\in S(G_i)$, $C:=\bigoplus_{j\in [\ell]} C_j$ with $C_j\in S(H_j)$, and $B$ is a nowhere-zero matrix. Since $X$ is orthogonal and symmetric, we have $X^2=I$.

Let us have a closer look at eigenvalues of $A$ and $C$. First we note that if $Y\in \{A_1,\dots,A_k,C_1,\dots,C_\ell\}$, then $Y$ is a principal submatrix of the orthogonal symmetric matrix~$X$, so every eigenvalue of $Y$ is in the closed interval $[-1,1]$. Moreover, at least one eigenvalue of $Y$ is in the open interval $(-1,1)$; for otherwise, $Y$ is orthogonal and from $X^2=I$ it follows easily that some row or column of $B$ must be zero, contrary to hypothesis.

Therefore $A\oplus C$ has at least one eigenvalue in the interval $(-1,1)$. Write the distinct eigenvalues of $A\oplus C$ in $(-1,1)$ as $\lambda_2<\dots<\lambda_{r-1}$, where $r\in\bN$ with $r\ge3$, and define  $\lambda_1=-1$ and $\lambda_r=1$.  Let $V$ be the multiplicity matrix for $G$ realised by $A$, i.e., $V=(v_{si})_{s,i}\in \bN_0^{r \times k}$ where $v_{si}:=\mlt(\lambda_{s},A_i)$ for $s\in [r]$ and $i\in [k]$.
Similarly, let $W=(w_{sj})_{s,j}\in \bN_0^{r\times \ell}$  be the multiplicity matrix for $H$ realised by $C$, with $w_{sj}:=\mlt(\lambda_{s},C_j)$. We proceed to show that $V$ and $W$ are compatible.

Let $U_A:=\bigoplus_{i\in [k]}  U_{A_i}$ and $U_C:=\bigoplus_{j\in [\ell]}  U_{C_j}$ where $U_{A_i}$ and $U_{C_j}$ are orthogonal matrices which diagonalise $A_i$ and $C_j$, respectively.
Consider the orthogonal symmetric matrix $X':=(U_A\oplus U_C)\trans X(U_A\oplus U_C)$. Since $X'$ is orthogonal, each of its rows and columns is a unit vector. In particular,  any row or column of $X'$ with $\pm1$ on the diagonal has every other entry equal to zero. Deleting all such rows and columns, we obtain an orthogonal symmetric matrix $X''$ of the form
$
X'':=\npmatrix{
  D_G&F\\F\trans &-D_H
}$,
where $D_G$ 
and $D_H$ are diagonal.  Note that the diagonal entries of $D_G$  and $D_H$ are precisely the numbers $\lambda_s$ for $s=2,\dots,r-1$,  where $\mlt(\lambda_{s},D_G)=(\widetilde V \1_k)_{s-1}$, and $\mlt(\lambda_{s},D_H)=(\widetilde W \1_\ell)_{s-1}$. Since $-1<\lambda_s<1$ for $s=2,\ldots,r-1$, the matrix $F\trans F=I-D_H^2$ is invertible, and hence $F$ has trivial kernel. The identity $(X'')^2=I$ also yields $D_GF-FD_H=0$, so by Lemma~\ref{lem:Sylv} we have $\sigma(D_H)\subseteq\sigma(D_G)$. By symmetry, $F\trans $ also has trivial kernel, allowing us to conclude $\sigma(D_H)=\sigma(D_G)$. Hence, $\widetilde V\1_k=\widetilde W\1_\ell$.

It only remains to show that $\widetilde V\trans \widetilde W>0$. Let $D_{G_i}$ be the restriction of $D_G$ to the vertices in $G_i$ which survived the earlier deletion, and define $D_{H_j}$ similarly. By our earlier observations, these are all non-empty matrices.
Let $F=\left(F_{ij}\right)$ be the block partition of $F$ compatible with $\bigoplus_{i\in [k]}  D_{G_i}$ and $\bigoplus_{j\in [\ell]}  D_{H_j}$. From $(X'')^2=I$, we conclude that $D_{G_i}F_{ij}-F_{ij}D_{H_j}=0$. If $\sigma(D_{G_i})\cap \sigma(D_{H_j})=\emptyset$, then Lemma~\ref{lem:Sylv} shows that the Sylvester equation $D_{G_i}Y-YD_{H_j}=0$ has only the trivial solution $Y=0$, so $F_{ij}=0$. Tracing back, this implies that the $(i,j)$th block of $B$ is also zero, which is a contradiction. Therefore $D_{G_i}$ and $D_{H_j}$ have a common eigenvalue $\lambda_s$ for at least one $s \in \{2,\ldots,r-1\}$, for all pairs $i$ and $j$. By construction, $A_i$ and $C_j$ have the same property, so
 $(\widetilde V\trans \widetilde W)_{ij}=\sum_{s=2}^{r-1}\mlt(\lambda_s,A_i)\mlt(\lambda_s,C_j) >0$.
\end{proof}

\begin{example}
It is straightforward to see that $2K_2$ and $3 K_1$ do not have compatible multiplicity matrices, hence $q(2K_2 \vee 3K_1)\ne 2$.   
 This gives an explicit counterexample to the claim in \cite[Lemma~3.4]{MR4044603}, which was later retracted in~\cite{corrigendum}. 
 In contrast, we will see in Section~\ref{sec:complete} that $q(2K_2 \vee 2K_1)=q(2K_2 \vee 4K_1)=2$.
\end{example}

\begin{remark}
  The proof of Theorem~\ref{thm:q=2} shows that,  if $Y\in S(G\vee H)$ with $q(Y)=2$, then $Y=\alpha X+\beta I$ for some $\alpha,\beta\in\bR$, and an orthogonal symmetric matrix 
  $X \in S(G\vee H)$ where $A:=X[G]$ and $C:=-X[H]$ must have compatible multiplicity matrices.

  As an application, note that if $V,W\in \bN_0^{r\times m}$ are compatible multiplicity matrices for $G=mK_1$ and $H=mK_1$, respectively, then we must have $\widetilde V=\widetilde W=\e_i\1_m\trans$ for some $i\in\{2,\dots,r-1\}$. Hence, if $q\npmatrix{D_1&B\\B\trans&D_2}=2$ where $D_1,D_2,B$ are $m\times m$ matrices with $D_1,D_2$ diagonal and $B$ nowhere-zero, then we must have $D_1=aI_m$ and $D_2=bI_m$ for some $a,b\in \bR$.
\end{remark}

The authors have not been able to determine whether the converse to Theorem~\ref{thm:q=2} is valid in full generality. In Theorem~\ref{thm:compatible sane} below a partial converse is given, which requires additional hypotheses. For this purpose, we introduce the following terminology.

 \begin{definition}
   Let $\vv\in \bN_0^r$ be a multiplicity vector for a connected graph~$G$, and recall the notation of Definition~\ref{def:mult-vector}.
   We say that $\vv$ is \emph{sane} for $G$ (standing for \emph{\textbf{s}pectrally \textbf{a}rbitrary with \textbf{n}owhere-zero \textbf{e}igenbases}), if, for any eigenvalue list $\Lambda=(\lambda_1,\dots,\lambda_r)\in \bR^r$ with $\lambda_i < \lambda_{i+1}$ for $i\in [r-1]$, there is a nowhere-zero orthogonal matrix $U$ so that $UD_{\Lambda,\vv} U\trans\in S(G)$. If, for any finite set $\mc Y\subseteq \bR^n\setminus\{0\}$  and any $\Lambda$, an orthogonal matrix $U$ can be chosen as above with the additional property that $U\yy$ is nowhere-zero for all $\yy \in \mc Y$, then we say that $\vv$ is \emph{generically realisable} for~$G$.
\end{definition}

\begin{definition}
  Let $V$ be a multiplicity matrix for a graph $G$. If every
  column of $V$ is sane/generically realisable for the
  corresponding connected component of $G$, then we say that $V$ is \emph{sane/generically realisable} for $G$.
\end{definition}

\begin{definition}
  If every multiplicity matrix for a graph~$G$ is sane for~$G$, then we say that $G$ is \emph{sane}.
  If every multiplicity matrix for a graph~$G$ is generically realisable for~$G$, then we say that $G$ is \emph{generically realisable}.
\end{definition}

\begin{remark}\label{0-1wb}
  Clearly, for a given graph~$G$, any generically realisable multiplicity vector is sane, and any sane multiplicity vector is spectrally arbitrary. In fact, in some situations those concepts agree, but as we will see in Section~\ref{sec:examples}, there are cases where they are different.

Sane multiplicity vectors were considered in the literature before, although not under that name. For example,  it is well known that if $\vv\in \bN_0^r$ and $n:=\1_r\trans\vv\ge2$, then $\vv$ is spectrally arbitrary for $K_n$ if it has at least two positive coordinates, and that any such $\vv$ is sane for $K_n$ (see, e.g.,~\cite[Theorem~4.5]{MR3208410}). Furthermore, any multiplicity vector $\vv\in \{0,1\}^{r}$ (i.e., with all its elements equal to $0$ or $1$) is sane for every connected graph $G$ of order $\1_r\trans \vv$, by~\cite[Theorem~4.2]{MR3034535}.

In fact, $K_n$ turns out to be generically realisable, as we will see in Proposition~\ref{complete-generic} below. In~\cite{LOS21}, we will prove that $P_n$ is generically realisable as well.
\end{remark}

 To prove Theorem~\ref{thm:compatible sane} we will need the following technical lemma. For $q\in \bN$, we write $SO(q)$ for the set of $q\times q$ special orthogonal matrices, i.e.,~all $q \times q$ orthogonal matrices with determinant~$1$, considered as an algebraic subvariety of $\bR^{q\times q}$.  For $\p=(p_1,\dots,p_r)\trans \in \bN^r$, we write
 $ SO(\p):=SO(p_1)\times \dots\times SO(p_r)$.
We will use the elementary fact from algebraic geometry that the algebraic variety $SO(q)$ is irreducible for any $q\in \bN$ (see, e.g.,~\cite{MR3331229}). Since $SO(\p)$ is a product of such varieties, it is also an irreducible algebraic variety.

 \begin{lemma}
   \label{lem:orthogonal3}
   Let $r\in \bN$, $\p=(p_1,\dots, p_r)\trans \in \bN^r$ and for $s \in [r]$, let $A_s\in  \bR^{p_s\times p_s}$ with $\rk A_s=1$. If $\sum_{s=1}^r\tr(A_sX_s)=0$ for all $(X_1,\dots,X_r)\in SO(\p)$, then $r \geq 2$ and $p_s=1$ for each $s\in [r]$.
 \end{lemma}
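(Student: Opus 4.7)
The plan is to decouple the $r$ summands by exploiting the product structure $SO(\p)=\prod_{s\in[r]}SO(p_s)$. First I would fix an index $s\in[r]$ and any choice $(X_t)_{t\ne s}\in \prod_{t\ne s}SO(p_t)$ (this is nonempty because every $SO(p_t)$ contains the identity). Applying the hypothesis to two tuples that agree outside the $s$th coordinate and subtracting yields $\tr(A_sX_s)=\tr(A_sX_s')$ for all $X_s,X_s'\in SO(p_s)$; so the map $X_s\mapsto \tr(A_sX_s)$ is constant on $SO(p_s)$.

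Next I would use the rank-one condition to turn this constancy into a concrete restriction on $p_s$. Writing $A_s=\uu_s\vv_s\trans$ with $\uu_s,\vv_s\in\bR^{p_s}\setminus\{0\}$, we have $\tr(A_sX_s)=\vv_s\trans X_s\uu_s$. If $p_s\ge 2$, then any unit vector $\yy\in\bR^{p_s}$ appears as the first column of some matrix in $SO(p_s)$: extend $\yy$ to an orthonormal basis and, if necessary, flip the sign of any column other than the first to force determinant $+1$ (possible because $p_s\ge 2$). Hence $X\uu_s$ ranges over the entire sphere of radius $\|\uu_s\|$ as $X$ runs through $SO(p_s)$, so $\vv_s\trans X\uu_s$ attains every value in the nondegenerate interval $[-\|\uu_s\|\|\vv_s\|,\|\uu_s\|\|\vv_s\|]$, contradicting constancy. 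Therefore $p_s=1$ for each $s\in[r]$.

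Finally, with every $p_s=1$, the case $r=1$ would give $SO(p_1)=\{1\}$ and collapse the hypothesis to $A_1=0$, contradicting $\rk A_1=1$; hence $r\ge 2$. The only step requiring any real thought is the non-constancy of $X\mapsto \vv_s\trans X\uu_s$ on $SO(p_s)$ for $p_s\ge 2$, and this is handled by the explicit first-column construction above. I do not expect to need the irreducibility of $SO(\p)$ mentioned just before the lemma; that observation is presumably used in a later argument.
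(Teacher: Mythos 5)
Your proof is correct and follows essentially the same route as the paper's: both decouple the sum using the product structure of $SO(\p)$ and then use the transitivity of $SO(p_s)$ on spheres (for $p_s\ge 2$) applied to a rank-one factorisation of $A_s$ to contradict constancy, finishing with the same $SO(1)=\{1\}$ observation for $r\ge2$. The paper simply picks two specific rotations sending $\vv_s$ to $\pm\ww_s$ rather than invoking the full range of values, but this is a cosmetic difference.
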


 \begin{proof}
   For $s\in [r]$, write $A_s=a_s\vv_s\ww_s\trans $ where $\vv_s,\ww_s\in\bR^{p_s}$ are unit vectors and $a_s\in\bR\setminus\{0\}$. Suppose for a contradiction that $p_s>1$ for some $s\in[r]$. Then $SO(p_s)$  acts transitively on the unit vectors in $\bR^{p_s}$, so there exist $X_s,X_s'\in SO(p_s)$ with $X_s\vv_s= \ww_s=-X_s'\vv_s$. For $t\in [r]\setminus\{s\}$, choose any $X_t\in SO(p_t)$ and set $X_t'=X_t$. We have $\sum_{t=1}^r\tr(A_tX_t)=0=\sum_{t=1}^r(A_tX_t')$, and cancelling common terms gives $\tr(A_sX_s)=\tr(A_sX_s')$. This implies that $a_s=0$, a contradiction. We conclude that $p_s=1$ for each $s\in [r]$. In particular, $SO(p_1)=\{1\}$, hence $\tr(A_11)=\pm a_1\ne0$, so $r\ne1$.
 \end{proof}

In the next result we identify a particular situation that Lemma \ref{lem:orthogonal3} will eventually be applied to.

 \begin{lemma}\label{lem:rankones}
  Let $k,m,n \in \bN$. For $s\in [k]$, let $p_s \in \bN$ and $\emptyset\ne \mc R_s,\mc C_s\subseteq [p_s]$ where $m':=\sum_{s\in [k]}|\mc R_s|\le m$ and $n':=\sum_{s\in [k]}|\mc C_s|\le n$.
  Let 
  $F: \times_{s=1}^k \bR^{p_s\times p_s}\to \bR^{n\times m}$ be given by
   \[F(X_1,\dots,X_k):=\npmatrix{\bigoplus_{s\in [k]}X_s[\mc R_s,\mc C_s]&0_{m'\times (n-n')}\\0_{(m-m')\times n'}&0_{(m-m')\times (n-n')}}.\]
   Fix $a\in [m]$, $b\in [n]$ and invertible nowhere-zero matrices $S\in \bR^{n\times n}$ and $T\in \bR^{m\times m}$. Then there exist matrices $A_s\in \bR^{p_s\times p_s}$ with $\rk A_s=1$ for $s\in [k]$ so that
$ \left(S\trans F(X_1,\dots,X_k)T\right)_{ab}=\sum_{s\in [k]}\tr(A_sX_s)$
   for any $X_1\in \bR^{p_1\times p_1},\dots,X_k\in \bR^{p_k\times p_k}$.
 \end{lemma}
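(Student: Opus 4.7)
The plan is to compute $(S\trans F T)_{ab}$ entry-wise and read off the desired rank-$1$ matrices $A_s$ from the coefficients of the $X_s$. I would start by unravelling the block structure of $F$: let $\mc I_s\subseteq [m]$ and $\mc J_s\subseteq [n]$ be the global row and column index sets occupied by the block $X_s[\mc R_s,\mc C_s]$ inside $F$, and let $\pi_s:\mc I_s\to\mc R_s$ and $\tau_s:\mc J_s\to\mc C_s$ be the corresponding order-preserving bijections. Since $F$ is identically zero outside these blocks, expanding the matrix product yields
\[(S\trans F T)_{ab}=\sum_{i,j} S_{ia}F_{ij}T_{jb}=\sum_{s\in[k]}\ \sum_{\substack{i\in\mc I_s\\ j\in\mc J_s}} S_{ia}\,T_{jb}\,(X_s)_{\pi_s(i),\tau_s(j)}.\]

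Next, for each $s\in[k]$, I would package the relevant entries of $S$ and $T$ into two column vectors in $\bR^{p_s}$: define $\uu_s$ by $(\uu_s)_{\pi_s(i)}=S_{ia}$ for $i\in\mc I_s$ and $(\uu_s)_\alpha=0$ for $\alpha\in[p_s]\setminus\mc R_s$, and similarly define $\ww_s$ by $(\ww_s)_{\tau_s(j)}=T_{jb}$ for $j\in\mc J_s$ and $(\ww_s)_\beta=0$ for $\beta\in[p_s]\setminus\mc C_s$. A change of summation variable ($\alpha=\pi_s(i)$, $\beta=\tau_s(j)$) converts the $s$th inner sum into $\uu_s\trans X_s\ww_s = \tr(\ww_s\uu_s\trans X_s)$. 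Setting $A_s:=\ww_s\uu_s\trans$ then gives the required identity $(S\trans F T)_{ab}=\sum_{s\in[k]}\tr(A_s X_s)$.

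Finally, I would verify the rank claim. Each $A_s=\ww_s\uu_s\trans$ is an outer product, hence of rank at most one, with $\rk A_s=1$ precisely when $\uu_s\ne0$ and $\ww_s\ne0$. Because $S$ and $T$ are nowhere-zero and $\mc R_s,\mc C_s$ are non-empty, each of $\uu_s,\ww_s$ has at least one non-zero coordinate, so $\rk A_s=1$ as required. The only real obstacle here is the index bookkeeping—in particular, setting up the bijections $\pi_s,\tau_s$ that match global row/column positions inside $F$ with the restricted index sets $\mc R_s,\mc C_s$ of $X_s$—but once these are in place the argument is pure computation, and the padding by zero blocks contributes nothing.
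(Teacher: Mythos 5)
Your proof is correct and follows essentially the same route as the paper's: the paper packages the entrywise expansion via the rank-one matrix $Y=(T\e_a)(S\e_b)\trans$ and Hadamard products with block indicators, but its matrices $A_s$ (defined by $A_s[\mc C_s,\mc R_s]:=Y[\mc F_s,\mc E_s]$, zero elsewhere) are exactly your outer products $\ww_s\uu_s\trans$. Your verification that $\rk A_s=1$ from the non-emptiness of $\mc R_s,\mc C_s$ and the nowhere-zero hypothesis on $S,T$ matches the paper's argument as well.
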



 \begin{proof}
   Let us write $F(\mc X)=F(X_1,\dots,X_k)$.
   Define $\mc E_s:=(\sum_{1\le t<s}|\mc R_t|) +[|\mc R_s|]\subseteq [m]$ and $\mc F_s:=(\sum_{1\le t<s}|\mc C_t|)+[|\mc C_s|]\subseteq [n]$, so that for $s\in [k]$ we have
$X_s[\mc R_s,\mc C_s]=F(\mc X)[\mc E_s,\mc F_s]$. Let $\chi_{\mc E_s\times \mc F_s}\in \bR^{m\times n}$ denote the matrix with ones on $\mc E_s\times \mc F_s$ and zeros everywhere else, and let  $\circ$ denote the Hadamard product. The block structure in the definition of $F(\mc X)$ implies that
$F(\mc X)=F(\mc X)\circ \sum_{s\in [k]}\chi_{\mc E_s\times \mc F_s}$.
   Observe that the matrix $Y:=(T\e_a)(S\e_b)\trans\in \bR^{n\times m}$ is rank one and nowhere-zero, so any 
   submatrix of $Y$ shares these properties. Define $A_s\in \bR^{p_s\times p_s}$ by setting $A_s[\mc C_s,\mc R_s]:=Y[\mc F_s,\mc E_s]$ and defining all other entries to be zero (i.e., $(A_s)_{\alpha,\beta}=0$ if $(\alpha,\beta)\not\in \mc C_s\times \mc R_s$). Then $\rk A_s=1$, and we have
   $  \tr(A_sX_s)=\tr(A_s[\mc C_s,\mc R_s]X_s[\mc R_s,\mc C_s])
                =\tr(Y[\mc F_s,\mc E_s]F(\mc X)[\mc E_s,\mc F_s])
                =\tr(Y(F(\mc X)\circ \chi_{\mc E_s\times \mc F_s}))$,
   so
   \begin{align*}
     \sum_{s\in [k]}\tr(A_sX_s)&=\tr\left(Y\Big(F(\mc X)\circ \sum_{s\in[k]}\chi_{\mc E_s\times \mc F_s}\Big)\right)
                               =\tr(YF(\mc X)) \\
                               &= \tr(T\e_a(S\e_b)\trans F(\mc X))
                               =\e_b\trans S\trans F(\mc X)T\e_a = (S\trans F(\mc X)T)_{ab}.\qedhere
   \end{align*}
 \end{proof}

\begin{theorem}\label{thm:compatible sane}
  Suppose that $G$ and $H$ have compatible sane multiplicity matrices $V=(v_{si})\in \bN_0^{r\times k}$ and $W=(w_{sj})\in \bN_0^{r\times \ell}$. Let $\p=(p_2,\dots,p_{r-1})\trans :=\widetilde V\1_k=\widetilde W\1_\ell$.
Then $q(G\vee H)\le 2$ if at least one of the following conditions is satisfied:
\begin{enumerate}
\item\label{c1} Whenever $(\widetilde V\trans \widetilde W)_{ij} \geq 2$, there exists $s \in \{2,\ldots,r-1\}$ with $ v_{s i} w_{s j} \neq 0$ and $p_s \geq 2$. 
\item\label{c2} $V$ is a generically realisable multiplicity matrix for $G$.
\item\label{c3} $W$ is a generically realisable multiplicity matrix for $H$.
\end{enumerate}
\end{theorem}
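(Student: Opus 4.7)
The plan is to construct an orthogonal symmetric matrix $X=\npmatrix{A&B\\B\trans&-C}\in S(G\vee H)$ with $A\in S(G)$, $C\in S(H)$ and $B$ nowhere-zero; since $X$ is orthogonal symmetric with $X\ne\pm I$, this gives $q(X)=2$ and hence $q(G\vee H)\le 2$. The identity $X^2=I$ unpacks into $A^2+BB\trans=I$, $C^2+B\trans B=I$ and $AB=BC$.

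To set up, I fix any eigenvalue list $\Lambda=(-1,\lambda_2,\dots,\lambda_{r-1},1)$ with $-1<\lambda_2<\dots<\lambda_{r-1}<1$. Using sanity of $V$ and $W$, I choose nowhere-zero orthogonal matrices $U_{A_i}$ and $U_{C_j}$ with $A_i:=U_{A_i}D_{\Lambda,V_i}U_{A_i}\trans\in S(G_i)$ and $C_j$ analogously in $S(H_j)$, and put $A:=\bigoplus_i A_i$, $C:=\bigoplus_j C_j$. Let $\hat U_A$ (resp.\ $\hat U_C$) be obtained from $\bigoplus_i U_{A_i}$ (resp.\ $\bigoplus_j U_{C_j}$) by permuting columns so they are grouped by eigenvalue, and let $\hat U_A^{(s)},\hat U_C^{(s)}$ be the blocks at $\lambda_s$. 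I parametrize
\[B:=\sum_{s=2}^{r-1}\sqrt{1-\lambda_s^2}\,\hat U_A^{(s)} O_s(\hat U_C^{(s)})\trans\]
with $O_s\in SO(p_s)$ still to be chosen. The identities $(\hat U_A^{(s)})\trans\hat U_A^{(s')}=\delta_{ss'}I$, $(\hat U_C^{(s)})\trans\hat U_C^{(s')}=\delta_{ss'}I$ and $O_sO_s\trans=I$ imply $X^2=I$ regardless of the $O_s$'s, reducing the problem to choosing the $O_s$'s so that $B$ is nowhere-zero.

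For $a\in V(G_i)$ and $b\in V(H_j)$, expanding the formula for $B$ and invoking the structure of Lemma~\ref{lem:rankones} yields
\[B_{ab}=\sum_{s\in \mc S_{ij}}\sqrt{1-\lambda_s^2}\,\tr\bigl(A_s^{(a,b)}O_s\bigr),\]
where $\mc S_{ij}:=\{s\in\{2,\dots,r-1\}:v_{si}w_{sj}>0\}$ and each $A_s^{(a,b)}\in\bR^{p_s\times p_s}$ is rank-one, built from the nowhere-zero $a$-th row of $U_{A_i}$ and $b$-th row of $U_{C_j}$. Under condition~(a), I claim that $O\mapsto B_{ab}$ is not identically zero on $SO(\p)$: if $(\widetilde V\trans\widetilde W)_{ij}=1$ then $|\mc S_{ij}|=1$ and a single nonzero trace form cannot vanish identically; if $(\widetilde V\trans\widetilde W)_{ij}\ge 2$, condition~(a) supplies $s\in \mc S_{ij}$ with $p_s\ge 2$ and Lemma~\ref{lem:orthogonal3} precludes identical vanishing. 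Since $SO(\p)$ is an irreducible variety, the common non-vanishing locus of the finitely many polynomials $(B_{ab})$ is non-empty open dense, and any point there gives a nowhere-zero~$B$.

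For (b) (condition (c) is symmetric), I fix the $U_{C_j}$'s first, pick $O_s\in SO(p_s)$ generically, and choose the $U_{A_i}$'s last. Writing $B=U_AQ$ where $U_A:=\bigoplus_i U_{A_i}$ and $Q:=U_A\trans B$ depends on $\hat U_C$ and the $O_s$'s but not on the $U_{A_i}$'s, I first verify that for generic $O$ every column of each subblock $Q[V(G_i),V(H_j)]$ is a nonzero vector in $\bR^{|G_i|}$---a transparent generic condition on the $O_s$'s. Invoking generic realisability of each column $V_i$ of $V$ for $G_i$, I then select a nowhere-zero $U_{A_i}$ such that $U_{A_i}y$ is nowhere-zero for every such column $y$ (a finite set of nonzero vectors); this forces $B[V(G_i),V(H_j)]=U_{A_i}Q[V(G_i),V(H_j)]$ to be nowhere-zero for every $(i,j)$. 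The main obstacle throughout is the careful block-and-permutation bookkeeping needed to cast $B_{ab}$ in the form demanded by Lemmas~\ref{lem:rankones} and~\ref{lem:orthogonal3}, and, in (b)/(c), to identify the correct auxiliary vectors for generic realisability.
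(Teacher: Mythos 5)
Your construction is essentially the paper's: you build the off-diagonal block in the eigenvector frame as $B=\sum_s\sqrt{1-\lambda_s^2}\,\hat U_A^{(s)}O_s(\hat U_C^{(s)})\trans$, whereas the paper first assembles the orthogonal matrix in the diagonal frame and then conjugates by the block-diagonal matrix of sane eigenbases; the two are related by a permutation and the same conjugation, and the entries of $B$ become the same rank-one trace forms $\sum_{s}\tr(A_sO_s)$ on the irreducible variety $SO(\p)$. Your treatment of condition (a) is correct and complete: non-identical vanishing of each entry follows from Lemma~\ref{lem:orthogonal3} exactly as in the paper (including the $|\mc S_{ij}|=1$ case, which is the $r\ne 1$ conclusion of that lemma), and irreducibility then yields a common non-vanishing point.

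The gap is in condition (b), at the step you describe as verifying that ``for generic $O$ every column of each subblock $Q[V(G_i),V(H_j)]$ is a nonzero vector --- a transparent generic condition on the $O_s$'s.'' This is not a generic condition in the problematic case, and that case is precisely where the work lies. If $p_s=1$ for every $s\in Q(i,j)$ (which Lemma~\ref{lem:orthogonal3} shows is the only way identical vanishing can occur), then $O_s\in SO(1)=\{1\}$ for all relevant $s$, so there is \emph{no} freedom in the $O_s$'s at all: the subblock is a fixed matrix, permutation-equivalent to $\npmatrix{D&0\\0&0}$ with $D=\diag\bigl((1-\lambda_s^2)^{1/2}:s\in Q(i,j)\bigr)$ invertible. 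Its columns applied to the rows of $U_{C_j}$ are nonzero not by genericity but because the kernel of such a matrix contains no nowhere-zero vector while the rows of $U_{C_j}$ are nowhere-zero (this is where sanity of $W$ is used, and why hypothesis (b) still needs $W$ sane even though only $V$ is assumed generically realisable). The paper devotes a separate argument to exactly this point: it first uses irreducibility to reduce to a single fixed column vanishing identically, then Lemma~\ref{lem:orthogonal3} to force $p_s=1$ on $Q(i_0,j_0)$, and then the kernel argument above to reach a contradiction. Without this, your claim that a generic choice of $\mc O$ makes all the columns nonzero is unsupported, and the subsequent appeal to generic realisability of $V$ (which is correct as you state it) has nothing to stand on.
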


\begin{proof}
Let $G_1,\dots,G_k$ and $H_1,\dots,H_\ell$ be the connected components of $G$ and
$H$, 
%
and let $\Lambda=(-1,\lambda_2,\dots,\lambda_{r-1},1)$ where $-1<\lambda_2<\ldots< \lambda_{r-1}<1$.
%
Write $\vv_i$ for $i$th column of $V$ and let $D_G\in \bR^{|G|\times |G|}$ be the diagonal matrix $D_G=\bigoplus_{i \in [k]} D_{G_i}$, where
$D_{G_i}=D_{\Lambda,\vv_i}$.
Similarly, let $\ww_j$ be the $j$th column of $W$ and let  $D_H\in \bR^{|H|\times |H|}$ be the diagonal matrix $D_H=\bigoplus_{j=1}^\ell D_{H_j}$, where
$D_{H_j}=D_{\Lambda,\ww_j}$.
Note that for $s\in \{2,\ldots,r-1\}$, we have $p_s=\mlt(\lambda_s,D_G)=\mlt(\lambda_s,D_H)$.

%

Let $\mc Z=(Z_1,\dots,Z_r)\in SO(\p)$. The freedom in the choice of $\mc Z$ will be needed at a later stage.  
For each $s\in \{2,\ldots,r-1\}$, let $U_s\in \bR^{2p_s\times 2p_s}$ be the orthogonal symmetric matrix
$$
U_s:=\npmatrix{
  \lambda_sI_{p_s}&(1-\lambda_s^2)^{1/2}Z_s\\(1-\lambda_s^2)^{1/2}Z_s\trans &-\lambda_s I_{p_s}
}.$$
Note that $U_s$ depends on our choice of $\mc Z$.
Set $n_{1}:=\e_1\trans V\1_k+\e_{r}\trans W\1_\ell$ to be the sum of the entries in the first row of $V$ and the last row of $W$, and $n_{r}:=\e_{r}\trans V\1_k+\e_{1}\trans W\1_\ell$ to be the sum of the entries in the first row of $W$ and the last row of $V$. Consider the orthogonal matrix
 $ U_0:=\left(\bigoplus_{s=2}^{r-1} U_s\right)\oplus  \left(-I_{n_{1}}\right)\oplus I_{n_{r}}$.
By construction, the diagonal of $U_0$ is a permutation of the diagonal of $D_G\oplus (-D_H)$. It follows that $U_0$ is permutation similar to an orthogonal matrix
$U=  \npmatrix{   D_G&B(\mc Z)\\B(\mc Z)\trans &-D_H
  }$
under a permutation which maps the top left block of each $U_s$ to a submatrix of $D_G$, and the bottom right block to a submatrix of $-D_H$, where $B(\mc Z)$ is a $|G|\times |H|$ matrix which depends on $\mc Z$.  
We partition $B(\mc Z)$  as a $k\times \ell$ block matrix $B(\mc Z)=(B_{ij}(\mc Z))_{i\in [k],j\in [\ell]}$  with block-partition compatible with those of $D_G=\bigoplus_{i=1}^k D_{G_i}$ and $D_H=\bigoplus_{j=1}^\ell D_{H_j}$, so that each $B_{ij}(\mc Z)$ is an $|G_i| \times |H_j|$ matrix. For $(i,j)\in [k]\times [\ell]$, let $Q(i,j)=\{s\colon 2\leq s \leq r-1,  v_{si} w_{sj}\ne 0\}$. Since $V$ and $W$ are compatible, $(\widetilde V\trans \widetilde W)_{ij}\ne 0$ and so $Q(i,j)\ne \emptyset$ for every~$i,j$.
For $2\le s\le r-1$, note that $\lambda_s$ is a diagonal value of both $D_{G_i}$ and $D_{H_j}$ if and only if $s\in Q(i,j)$. By construction, the rows and columns of $B_{ij}(\mc Z)$ may be permuted to obtain the $|G_i|\times|H_j|$  matrix \begin{equation}\label{eq:Bprime}B_{ij}'(\mc Z):=\npmatrix{\bigoplus_{s \in Q(i,j)} (1-\lambda_s^2)^{1/2} Z_s[\mc R_{si},\mc C_{sj}]& 0\\0&0}
\end{equation}
where for each $s\in Q(i,j)$ both $\{\mc R_{si}\colon i\in [k]\}$ and $\{\mc C_{sj}\colon j\in [\ell]\}$ are partitions of $[p_s]$, with $|\mc R_{si}|=\mlt(\lambda_s,D_{G_i})=v_{si}$ and $|\mc C_{sj}|=\mlt(-\lambda_s,-D_{H_j})=w_{sj}$,  and where the zeros in the matrix $B_{ij}'(\mc Z)$ above represent the (possibly empty) zero matrices which pad the direct sum out to $|G_i|$ rows and $|H_j|$ columns.

Since $V$ and $W$ are sane multiplicity matrices for~$G$ and~$H$, respectively, there exist nowhere-zero orthogonal matrices $S_i,T_j$ for $i\in [k]$, $j\in [\ell]$, such that $S_iD_{G_i}S_i\trans \in S(G_i)$ and
$T_jD_{H_j}T_j\trans \in S(H_j)$. Let  $\mc S:=(S_1,\ldots,S_k)$ and $\mc T:=(T_1,\ldots,T_\ell)$, and note that $\mc S$ and $\mc T$ can typically be chosen in several different ways. 
The matrix $R:=(\bigoplus_{i=1}^k S_i) \oplus (\bigoplus_{j=1}^\ell T_j)$ is orthogonal and thus
$R U R\trans =\npmatrix{A_G & C(\mc Z) \\ C(\mc Z)\trans  & A_H}$ is an orthogonal symmetric matrix with $A_G \in S(G)$ and $A_H \in S(H)$. Hence, provided the matrix $C(\mc Z)=(\bigoplus_{i=1}^k S_i)B(\mc Z)(\bigoplus_{j=1}^\ell T_j)\trans $ has no zero elements we will have  $RUR\trans  \in S(G \vee H)$ and so $q(G\vee H)\le 2$, as required. To analyse when this happens, we will now refer back to the freedom we have in choosing $\mc Z$, $\mc S$ and $\mc T$.

Note that the matrix $C(\mc Z)$ inherits the block-partition of $B(\mc Z)$, i.e., $C(\mc Z)$ is a $k \times \ell$ block matrix with $|G_i|\times |H_j|$ blocks $C_{ij}(\mc Z)=S_i B_{ij}(\mc Z) T_j\trans $. 
We will first fix $\mc S$ and $\mc T$ arbitrarily and show that hypothesis~\ref{c1} of the theorem guarantees that $C(\mc Z)$ 
is nowhere-zero for some appropriate choice of $\mc Z$.

\newcommand{\bb}{B_{i_0,j_0}(\mc Z)}%
\newcommand{\bbp}{B_{i_0,j_0}'(\mc Z)}%
For $i\in [k]$, $j\in [\ell]$, $a\in [|G_i|]$ and $b\in [|H_j|]$, consider the the linear functionals
 $L_{ab}(i,j):SO(\p)\to \bR $ given by
 $L_{ab}(i,j)(\mc Z):=(C_{ij}(\mc Z))_{ab}$.
If $q(G\vee H)>2$, then $C(\mc Z)$ has at least one entry equal to zero for any choice of $\mc Z \in SO(\p)$, so $SO(\p)\subseteq \bigcup_{i,j,a,b}L_{ab}(i,j)^{-1}(0)$.
Since $SO(\p)$ is an irreducible algebraic variety, there exist $i_0,j_0,a_0,b_0$ so that for $L_0:=L_{a_0,b_0}(i_0,j_0)$ we have $SO(\p) \subseteq L_{a_0,b_0}(i_0,j_0)^{-1}(0)$, i.e., $L_0(\mc Z)=0$ for $\mc Z \in SO(\p)$.
On the other hand, $L_0(\mc Z)=(S_{i_0}\Pi_1\bbp\Pi_2T_{j_0}\trans )_{a_0b_0}$ where $\Pi_1,\Pi_2$ are permutation matrices with $\Pi_1\bbp\Pi_2=\bb$. By equation~(\ref{eq:Bprime}) and Lemma~\ref{lem:rankones}, there exist matrices $A_s\in \bR^{p_s\times p_s}$ with $\rk A_s=1$ for $s\in Q(i_0,j_0)$   so that $0=L_0(\mc Z)=\sum_{s\in Q(i_0,j_0)}\tr(A_sZ_s)$ for every $\mc Z\in SO(\p)$. By Lemma~\ref{lem:orthogonal3}, we have $(\widetilde V\trans\widetilde W)_{i_0j_0}\ge |Q(i_0,j_0)|\geq 2$, and $p_s=1$ for every $s\in Q(i_0,j_0)$. On the other hand, for $s\in \{2,\dots,r-1\}\setminus Q(i_0,j_0)$, we have $(\widetilde V\trans\widetilde W)_{i_0j_0}=0$.
This is inconsistent with hypothesis~\ref{c1}. Hence, when~\ref{c1} holds, we can find a suitable $\mc Z$ for which the orthogonal matrix $RUR\trans$ lies in $S(G\vee H)$, so $q(G\vee H)=2$ in this case.

Now suppose that hypothesis~\ref{c2} holds. Fix any 
  invertible nowhere-zero $|G_i|\times |G_i|$ matrices~$\widetilde S_i$ for $i\in [k]$, and
  consider the linear functionals
 $\widetilde L_{ab}(i,j):SO(\p)\to \bR$ given by $\widetilde L_{ab}(i,j)(\mc Z):=(\widetilde S_i B_{ij}(\mc Z)T_j\trans )_{ab}$.
Suppose that for any $\mc Z\in SO(\p)$, there exist $i,j,b$ so that the $b$th column of $B_{ij}(\mc Z)T_j\trans $ is zero. Then $SO(\p)\subseteq \bigcup_{i,j,b}\bigcap_a \widetilde L_{ab}(i,j)^{-1}(0)$,
and by the irreducibility of the algebraic variety $SO(\p)$ there exist some fixed $i_0,j_0,b_0$ so that $SO(\p)\subseteq \bigcap_a \widetilde L_{ab_0}(i_0,j_0)^{-1}(0)$. In other words, for every $\mc Z\in SO(\p)$, the $b_0$th column of $\widetilde S_{i_0} B_{i_0j_0}(\mc Z)T_{j_0}\trans $ is zero, so by the invertibility of $\widetilde S_{i_0}$, the $b_0$th column of $B_{i_0j_0}(\mc Z)T_{j_0}\trans $ is zero. Now (taking $a=1$) observe that $\widetilde L:=\widetilde L_{1b_0}(i_0,j_0)$ vanishes at every $\mc Z\in SO(\p)$. Since $\widetilde S_{i_0}$ is nowhere-zero, Lemma~\ref{lem:rankones} implies that $\widetilde L$ may be written in the form $\widetilde L(\mc Z)=\sum_{s\in Q}\tr(A_sZ_s)$, where $\emptyset\ne Q:=Q(i_0,j_0)$ and $\rk A_s=1$ for each $s\in Q$. By Lemma~\ref{lem:orthogonal3}, for each $s\in Q$ we have $p_s=1$. So for every $\mc Z\in SO(\p)$ and $s\in Q$, we have $Z_s\in SO(1)$, i.e., $Z_s=1$. This implies that $B_{i_0j_0}'(\mc Z)=\npmatrix{D& 0\\0&0}$ where $D=\diag((1-\lambda_s^2)^{1/2}:s\in Q)$ is an invertible diagonal matrix and the $0$s represent (possibly empty) zero matrices. (In particular, $B'_{i_0j_0}(\mc Z)$ is independent of $\mc Z$.) This implies that the kernel of $B'_{i_0j_0}(\mc Z)$ does not contain a nowhere-zero vector, and the same is therefore true of the permutation equivalent matrix $B_{i_0j_0}(\mc Z)$. Since $T_{j_0}$ is nowhere-zero, it follows that the $b_0$th column of $B_{i_0j_0}(\mc Z)T_{j_0}\trans $ is not zero, a contradiction.

Therefore it is possible to choose $\mc Z\in SO(\p)$ so that $B_{ij}(\mc Z) T_j\trans $ has no column equal to zero for all pairs $i$ and $j$. Since $V$ is generically realisable for $G$, we can find a $k$-tuple  $\mc S=(S_1,\dots,S_k)$ of nowhere-zero orthogonal matrices such that for every $i\in [k]$, we have $S_i D_{G_i}S_i\trans \in S(G_i)$ and for every $j\in [\ell]$, the matrix $C_{ij}(\mc Z)=S_i B_{ij}(\mc Z) T_j\trans $ is nowhere-zero. Hence, $q(G\vee H)\le 2$ in this case. The argument for hypothesis~\ref{c3} is of course symmetric.  
\end{proof}

Theorems~\ref{thm:q=2} and~\ref{thm:compatible sane} yield:
\begin{corollary}\label{compat-iff}
  If $G$ is generically realisable and $H$ is sane,
  then $q(G\vee H)= 2$ if and only if $G$ and $H$ have a pair of compatible multiplicity matrices.
\end{corollary}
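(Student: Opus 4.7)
The proof is essentially an immediate combination of the two preceding theorems, but the direction of each implication relies on a different result, so I would structure it as two paragraphs.

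For the forward direction, I would simply cite Theorem~\ref{thm:q=2}, which gives no conditions on $G$ and $H$: if $q(G\vee H)=2$, then $G$ and $H$ automatically have a pair of compatible multiplicity matrices $V$ and $W$. The hypotheses that $G$ is generically realisable and $H$ is sane are not needed here.

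For the converse, suppose $V$ is a multiplicity matrix for $G$ and $W$ is a multiplicity matrix for $H$, and that $V,W$ are compatible. Since $G$ is generically realisable, the matrix $V$ is generically realisable for $G$ by the definition of that property for graphs. By Remark~\ref{0-1wb}, a generically realisable multiplicity matrix is in particular sane, so $V$ is sane for~$G$. Since $H$ is sane, the matrix $W$ is sane for $H$. Thus $V$ and $W$ are compatible sane multiplicity matrices for $G$ and $H$, and condition~\ref{c2} of Theorem~\ref{thm:compatible sane} (namely, that $V$ is generically realisable) is satisfied. Applying Theorem~\ref{thm:compatible sane} yields $q(G\vee H)\le 2$.

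To complete the argument I would note that $G\vee H$ contains at least one edge (since every vertex set is assumed non-empty, so $G\vee H$ has at least one edge joining a vertex of $G$ to a vertex of $H$); consequently no matrix in $S(G\vee H)$ is a scalar multiple of the identity, so $q(G\vee H)\ge 2$. Combining the two inequalities gives $q(G\vee H)=2$, as required. There is no real obstacle in this proof, since all the technical work has already been done in Theorems~\ref{thm:q=2} and~\ref{thm:compatible sane}; the only mildly delicate point is remembering that ``generically realisable'' is strictly stronger than ``sane'' (Remark~\ref{0-1wb}), which is what allows the asymmetric hypotheses on $G$ and $H$ to feed correctly into condition~\ref{c2} of Theorem~\ref{thm:compatible sane}.
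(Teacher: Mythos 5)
Your proposal is correct and takes essentially the same route as the paper, which derives the corollary directly from Theorem~\ref{thm:q=2} (forward direction) and Theorem~\ref{thm:compatible sane} under hypothesis~\ref{c2} (converse). Your additional remarks --- that generic realisability implies sanity so that the hypotheses of Theorem~\ref{thm:compatible sane} are met, and that $q(G\vee H)\ge 2$ since the join always has an edge --- are accurate and merely make explicit what the paper leaves implicit.
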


\section{Remarks, applications, examples}\label{sec:examples}


Our principal application of Theorem~\ref{thm:compatible sane} in this paper is to determine the minimum number of distinct eigenvalues of the join of two graphs, each of which is a union of complete graphs. Before turning to that in Section~\ref{sec:complete}, we give some other consequences and related discussion.

\subsection{Construction of orthogonal symmetric matrices}
 As is apparent from the proof of Theorem \ref{thm:compatible sane}, several things would have to align for two graphs $G$ and $H$ with compatible multiplicity matrices to have $q(G \vee H) > 2$, and this won't happen in any sufficiently generic situation. In other words, once two graphs have compatible multiplicity matrices, they will typically have $q(G \vee H)$ equal to $2$. 
 This potentially gives us a wealth of examples, including situations in which we cannot verify the technical hypotheses of Theorem~\ref{thm:compatible sane}.

 In Pseudo-algorithm~\ref{alg:pseudo}, we concisely summarise the construction used repeatedly in the previous section. This is illustrated by the example involving cycles below.

\begin{algorithm}
  Suppose $G$ and $H$ are graphs with $k$ and $\ell$ components, respectively, and
 $A_G\in S(G)$ and $A_H\in S(H)$ have compatible multiplicity matrices $V\in \bN_0^{r\times k}$ and $W\in \bN_0^{r\times \ell}$, with respect to $\Lambda=(-1,\lambda_2,\dots,\lambda_{r-1},1)$ where $\lambda_i\in (-1,1)$ for $i=2,\dots,r-1$.
  \begin{enumerate}[itemsep=1ex,topsep=1ex]\renewcommand{\labelenumi}{Step~\arabic{enumi}.}
  \item Choose orthogonal matrices $S,T$ with $S\trans A_G S=D_{\Lambda,V\1_k}$ and $T\trans A_H T = D_{\Lambda,W\1_\ell}$.
    Choose $Z_i\in O(p_i)$ for $i=2,\ldots,r-1$, where $p_i=\e_i\trans V\1_k=\e_i\trans W\1_\ell$.
\item Let $C:=S\left(0_{\e_1\trans V\1_k\times \e_1\trans W\1_\ell}\oplus \left(\bigoplus_{s=2}^{r-1} (1-\lambda_s^2)^{1/2}Z_s\right)\oplus 0_{\e_r\trans V\1_k\times \e_r\trans W\1_\ell}\right)T\trans$.
\item If $C$ is nowhere-zero, return 
  $X:=\npmatrix{A_G & C\\C\trans & -A_H}$.
Otherwise, go to Step~1, making a different choice of orthogonal matrices $S$, $T$ and $Z_i$, if possible.
\end{enumerate}
Of course, we have no guarantee that this procedure will terminate; this is why the technical hypotheses of Theorem~\ref{thm:compatible sane} were imposed.
\caption{
  Finding an orthogonal symmetric matrix in $S(G\vee H)$}\label{alg:pseudo}
\end{algorithm}


\begin{example}\label{eg:cycles}
Let $G=C_8$ and $H=C_4$.
To construct an orthogonal symmetric matrix $X \in S(G \vee H)$, let $\vv=(2,2,2,2)\trans$ and $\ww=(0,2,2,0)\trans$. These are compatible multiplicity vectors  for $G$ and $H$, respectively; for example, 
\[ A_G:=\tfrac1{\sqrt{2+\sqrt2}}
    \left(
      \begin{smallmatrix}
        0 & 1 & 0 & 0 & 0 & 0 & 0 & -1 \\
        1 & 0 & 1 & 0 & 0 & 0 & 0 & 0 \\
        0 & 1 & 0 & 1 & 0 & 0 & 0 & 0 \\
        0 & 0 & 1 & 0 & 1 & 0 & 0 & 0 \\
        0 & 0 & 0 & 1 & 0 & 1 & 0 & 0 \\
        0 & 0 & 0 & 0 & 1 & 0 & 1 & 0 \\
        0 & 0 & 0 & 0 & 0 & 1 & 0 & 1 \\
        -1 & 0 & 0 & 0 & 0 & 0 & 1 & 0 \\
      \end{smallmatrix}
    \right)\in S(G),\quad
    A_H:=\tfrac1{2+\sqrt2}\left(
\begin{smallmatrix}
 0 & 1 & 0 & -1 \\
 1 & 0 & 1 & 0 \\
 0 & 1 & 0 & 1 \\
 -1 & 0 & 1 & 0 \\
\end{smallmatrix}
\right)\in S(H)\]
have these multiplicities with respect to the eigenvalue list $\Lambda=(-1,-\lambda,\lambda,1)$, where $\lambda=\sqrt{2}-1$.
We 
execute Pseudo-algorithm~\ref{alg:pseudo} using a computer algebra system. With a little trial and error, it turns out that we can do this in exact arithmetic with $Z_2=I_2$ and $Z_3=\frac15\left(\begin{smallmatrix}4&-3\\3&4\end{smallmatrix}\right)$, to obtain the orthogonal symmetric matrix $X=\npmatrix{A_G&C\\C\trans &-A_H}\in S(G\vee H)$, where for $\alpha(r):=\sqrt{5+\tfrac{r}{\sqrt{2}}}$, we have 
\[C:=\frac{\sqrt\lambda}{10}
\left(
\begin{smallmatrix}
  3 \sqrt{2} & -1            & 6 \sqrt{2}  & 3             \\[2pt]
 -\alpha(1)  & 3 \alpha(-7)  & -\alpha(-1) & 3 \alpha(7)   \\[2pt]
 -9          & -\sqrt{2}     & -3          & -2 \sqrt{2}   \\[2pt]
 \alpha(7)   & -3 \alpha(1)  & -\alpha(-7) & -3 \alpha(-1) \\[2pt]
 6 \sqrt{2}  & 3             & -3 \sqrt{2} & 1             \\[2pt]
 -\alpha(-1) & 3 \alpha(7)   & \alpha(1)   & -3 \alpha(-7) \\[2pt]
 -3          & -2 \sqrt{2}   & 9           & \sqrt{2}      \\[2pt]
 -\alpha(-7) & -3 \alpha(-1) & -\alpha(7)  & 3 \alpha(1)   \\
\end{smallmatrix}
\right)\approx
\left(
\begin{smallmatrix}
\+0.27 & -0.064 &\+0.55 &\+0.19 \\[2pt]
 -0.15 &\+0.043 & -0.13 &\+0.61 \\[2pt]
 -0.58 & -0.091 & -0.19 & -0.18 \\[2pt]
\+0.20 & -0.46 & -0.014 & -0.40 \\[2pt]
\+0.55 &\+0.19 & -0.27 &\+0.064 \\[2pt]
 -0.13 &\+0.61 &\+0.15 & -0.043 \\[2pt]
 -0.19 & -0.18 &\+0.58 &\+0.091 \\[2pt]
 -0.014 & -0.40 & -0.20 &\+0.46 \\
\end{smallmatrix}
\right).\]
%
\end{example}

\subsection{Some families of graphs realisable by an orthogonal symmetric matrix}
In \cite[Theorem~5.2]{MR3506498} the authors proved that $q(G\vee H)=2$ if $G$ and $H$ are connected graphs with $|G|=|H|$. We can use Theorem \ref{thm:compatible sane}\ref{c1} to extend their result to more than one connected component. (In fact, the statement below is also valid for $k=1$~\cite{LOS21}.)
\begin{theorem}
  If $G$ and $H$ are graphs each having $k\ge 2$ connected components, and there is $n\in \bN$ so that the order of every one of these connected components is either $n$, $n+1$ or $n+2$,
  then 
  $q(G\vee H)=2$.
\end{theorem}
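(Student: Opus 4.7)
The plan is to construct compatible sane multiplicity matrices for $G$ and $H$ and then apply Theorem~\ref{thm:compatible sane}\ref{c1}. Set $r := n+2$ (so $r \ge 3$, as compatibility requires). Writing $G = G_1 \cup \dots \cup G_k$ and $H = H_1 \cup \dots \cup H_k$, for each connected component I would choose a $\{0,1\}$-valued multiplicity vector in $\bN_0^r$ that sums to its order, with the zeros (if any) confined to the first and/or last coordinate: namely, $(0, 1, \ldots, 1, 0)\trans$ for a component of order $n$, $(1, 1, \ldots, 1, 0)\trans$ for order $n+1$, and $\1_{n+2}$ for order $n+2$. By Remark~\ref{0-1wb}, every such $0$-$1$ vector is sane for any connected graph of the matching order, so the assembled matrices $V, W \in \{0,1\}^{r \times k}$ are sane multiplicity matrices for $G$ and $H$, respectively.

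The purpose of confining the zeros to the endpoints is that deleting the first and last rows of any column produces $\1_n$, regardless of whether that component has order $n$, $n+1$ or $n+2$. Hence $\widetilde V \1_k = k\1_n = \widetilde W \1_k$, and $\widetilde V\trans \widetilde W$ is the $k\times k$ matrix with every entry equal to $n$, which is strictly positive since $n \ge 1$. So $V$ and $W$ are compatible. Furthermore $\p := \widetilde V\1_k = k\1_n$ satisfies $p_s = k \ge 2$ for every $s \in \{2, \ldots, r-1\}$, and $v_{si} w_{sj} = 1$ for every such $s$ and every pair $(i,j)$. Thus hypothesis~\ref{c1} of Theorem~\ref{thm:compatible sane} holds trivially, yielding $q(G\vee H) \le 2$. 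Since $G\vee H$ contains edges (every vertex of $G$ is joined to every vertex of $H$), we conclude $q(G\vee H) = 2$.

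There is no serious technical obstacle here, because Remark~\ref{0-1wb} absorbs the entire graph-theoretic input (sparing us from analysing each individual component) and Theorem~\ref{thm:compatible sane} absorbs the orthogonal-matrix construction. The only insight required is that, precisely because the three possible component orders differ by at most $2$, one can choose multiplicity vectors whose middle parts all coincide with $\1_n$, and this alignment automatically delivers both compatibility and the trivial verification of hypothesis~\ref{c1}.
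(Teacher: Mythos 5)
Your proof is correct and follows essentially the same route as the paper: both build multiplicity matrices whose middle $n$ rows are all ones (so $\widetilde V=\widetilde W$ and $\widetilde V\trans\widetilde W=n\1_k\1_k\trans>0$), invoke Remark~\ref{0-1wb} for sanity of $0$-$1$ columns, and apply Theorem~\ref{thm:compatible sane}\ref{c1} using $p_s=k\ge 2$. The only difference is that you fix a specific placement of the extra ones for components of order $n+1$, whereas the paper leaves that choice free; this is immaterial.
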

\begin{proof}
  Let $G_1,\dots,G_k$ and $H_1,\dots,H_k$ be the connected components of $G$ and $H$, respectively. Consider the matrices
 $$V:=\npmatrix{{\vv}\trans\\E\\{\vv'}\trans}\in \bN_0^{(n+2)\times k}\text{ and }
 W:=\npmatrix{{\ww}\trans\\E\\{\ww'}\trans}\in \bN_0^{(n+2)\times k},$$
 where $E:=\1_n\1_k\trans\in \R^{n \times k}$ is the matrix full of ones and $\vv=(v_i)_{i\in [k]}$, $\vv'=(v'_i)_{i\in [k]}$, $\ww=(w_i)_{i\in [k]}$ and $\ww'=(w'_i)_{i\in [k]}$ are all $0$-$1$ vectors in $\{0,1\}^{k}$, such that $v_i+v'_i=|G_i|-n$ and $w_i+w'_i=|H_i|-n$ for $i\in [k]$.
 Note that $V$ and $W$ are  sane  multiplicity matrices for $G$ and $H$, respectively, by Remark \ref{0-1wb}. Moreover, $V$ and $W$ are compatible since $\widetilde V=\widetilde W$ 
 and $\widetilde V\trans \widetilde W =n \1_k\1_k\trans>0$.
 Since $k\geq 2$, Theorem \ref{thm:compatible sane}\ref{c1} implies that $q(G \vee H)=2$.
\end{proof}

%
%

It is easy to see that a complete graph is generically realisable (see Proposition~\ref{complete-generic} below for details). This fact together with Theorem \ref{thm:compatible sane}\ref{c3} allows us to extend
 \cite[Lemmas 3.13 and 3.14]{MR4044603}, where the authors proved that if $G$ is a connected graph of order $\ell$ or $\ell+1$, then $q(G\vee \ell K_1)=2$.

\begin{theorem}\label{cor:G Kn}
 If $G$ is a connected graph of order $m \in \{\ell,\ell+1,\ell+2\}$  and $n_1,\ldots,n_\ell\in \bN$, 
 then $q(G \vee \bigcup_{j\in [\ell]} K_{n_j})=2$.
 \end{theorem}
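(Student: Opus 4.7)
The plan is to apply Theorem~\ref{thm:compatible sane}\ref{c3}, which reduces the problem to producing a pair of compatible multiplicity matrices $V$ for $G$ and $W$ for $H$ with $V$ sane and $W$ generically realisable. The second requirement is automatic: every $K_n$ is generically realisable (trivially when $n=1$, and by Proposition~\ref{complete-generic} when $n\ge2$), so every multiplicity matrix for $H=\bigcup_{j\in[\ell]}K_{n_j}$ is generically realisable. The only real work is to exhibit compatible $V$ and $W$ with $V$ sane.

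I will take $r:=\ell+2$ and construct $V\in\{0,1\}^r$ by setting $V_s=1$ for $s\in\{2,\dots,\ell+1\}$ and choosing $V_1,V_r\in\{0,1\}$ with $V_1+V_r=m-\ell$; this is possible precisely because $m-\ell\in\{0,1,2\}$. Then $V$ is a $0$-$1$ vector with exactly $m$ ones, so by Remark~\ref{0-1wb} it is sane for the connected graph $G$. I will define $W\in\bN_0^{r\times\ell}$ by $W_{1,j}:=n_j-1$ and $W_{j+1,j}:=1$, with all other entries zero. Each column of $W$ sums to $n_j$ and has at least two positive entries when $n_j\ge 2$ (respectively exactly one when $n_j=1$), so each column is a valid multiplicity vector for the corresponding component $K_{n_j}$; thus $W$ is a multiplicity matrix for $H$.

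Compatibility is then immediate: $\widetilde V=\1_\ell$ while $\widetilde W=I_\ell$, so $\widetilde W\1_\ell=\1_\ell=\widetilde V$ and $\widetilde V\trans\widetilde W=\1_\ell\trans>0$. Theorem~\ref{thm:compatible sane}\ref{c3} then yields $q(G\vee H)\le 2$, and the lower bound $q(G\vee H)\ge 2$ holds because $G\vee H$ has edges. I do not anticipate a serious obstacle: the hypothesis $m\in\{\ell,\ell+1,\ell+2\}$ is used exactly to absorb the surplus $m-\ell$ ones of $V$ into the two extreme positions, while the remaining bookkeeping is simply verifying that each column of $W$ is a genuine multiplicity vector for its component (in particular handling the $n_j=1$ case, where the column collapses to a single unit vector).
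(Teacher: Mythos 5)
Your proposal is correct and follows essentially the same route as the paper: the same $0$--$1$ vector $V$ (with the surplus $m-\ell$ absorbed into the first and last rows), the same matrix $W$ with first row $(n_1-1,\dots,n_\ell-1)$ and middle block $I_\ell$, sanity of $V$ via Remark~\ref{0-1wb}, generic realisability of $W$ via Proposition~\ref{complete-generic}, and the conclusion via Theorem~\ref{thm:compatible sane}\ref{c3}. No gaps.
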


 \begin{proof}
Let us define
 $$V:=\npmatrix{\varepsilon \\ \1_{\ell}\\ \varepsilon'}\in \bN_0^{\ell+2} \text{ and } W:=\npmatrix{{\n\trans}\\
I_{\ell}\\
{{\bf 0}_\ell}\trans} \in \bN_0^{(\ell+2) \times \ell},$$
where $\varepsilon',\varepsilon''\in \{0,1\}$ and ${\n}=\npmatrix{n_1-1 & \ldots & n_\ell-1}\trans\in \bN_0^{\ell}$. By Remark \ref{0-1wb}, $V$ is a sane multiplicity matrix for $G$, and by Proposition \ref{complete-generic}, $W$ is a generically realisable multiplicity matrix for $\bigcup_{j\in [\ell]} K_{n_j}$.
 Since $\widetilde V =\widetilde W \1_{\ell}=\1_{\ell}$ and $\widetilde V\trans \widetilde W =\1_{\ell}\trans>0$, $V$ and $W$ are compatible. It follows by Theorem \ref{thm:compatible sane}\ref{c3} that $q(G \vee \bigcup_{j\in [\ell]} K_{n_j})=2$.
 \end{proof}
%
%
%



\subsection{Generic realisability and sanity}

In the remainder of this section, we explore the relationship between the sane and the generically realisable multiplicity vectors. First, we show that provided its minimum multiplicity is greater than one, any sane multiplicity vector is automatically generically realisable.

\begin{proposition}\label{prop:gen}
If $\vv \in  \bN_0^r$ is a sane multiplicity vector for $G$ with no entry equal to $1$, then $\vv$ is also generically realisable for $G$.
\end{proposition}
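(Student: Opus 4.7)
The plan is to exploit the freedom of rotation within each eigenspace of $D_{\Lambda,\vv}$, which has dimension either $0$ or at least $2$ by the hypothesis that no entry of $\vv$ equals~$1$.

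Fix $\Lambda$ and a finite $\mc Y\subseteq \bR^n\setminus\{0\}$. By sanity, choose a nowhere-zero orthogonal matrix $U_0$ with $U_0 D_{\Lambda,\vv} U_0\trans\in S(G)$. Let $\mc S:=\{s\in [r]\colon v_s>0\}$, and let $\mc Q:=\prod_{s\in \mc S} SO(v_s)$. This is an irreducible real algebraic variety by the irreducibility of each factor $SO(v_s)$ (as cited just before Lemma~\ref{lem:orthogonal3}). For $Q=(Q_s)_{s\in \mc S}\in \mc Q$, let $\widehat Q:=\bigoplus_{s\in\mc S} Q_s$; since $\widehat Q$ commutes with $D_{\Lambda,\vv}$, we have $(U_0\widehat Q)D_{\Lambda,\vv}(U_0\widehat Q)\trans\in S(G)$ for every $Q\in\mc Q$. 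The task thus reduces to finding $Q\in\mc Q$ such that both $U_0\widehat Q$ and each $U_0\widehat Q\yy$ (for $\yy\in\mc Y$) are nowhere-zero.

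The crucial step is to show that for each fixed $i\in [n]$ and $\yy\in \mc Y\cup\{\e_1,\dots,\e_n\}$, the polynomial map $L_{i,\yy}\colon Q\mapsto (U_0\widehat Q\yy)_i$ does not vanish identically on $\mc Q$. Decomposing $\yy=(\yy_s)_{s\in\mc S}$ along the eigenspace blocks, some $\yy_{s_0}\ne 0$, and by hypothesis $v_{s_0}\ge 2$. Setting $Q_s=I$ for $s\ne s_0$ reduces $L_{i,\yy}(Q)$ to $C+\aaa\trans(Q_{s_0}\yy_{s_0})$ for a constant $C\in\bR$ and a nowhere-zero vector $\aaa\in\bR^{v_{s_0}}$ equal to the block-$s_0$ restriction of the $i$th row of~$U_0$. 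Since $SO(v_{s_0})$ acts transitively on the sphere of radius $\|\yy_{s_0}\|$ in $\bR^{v_{s_0}}$ (using $v_{s_0}\ge 2$), the linear function $Q_{s_0}\mapsto \aaa\trans(Q_{s_0}\yy_{s_0})$ is non-constant, which forces $L_{i,\yy}$ to be non-constant and hence not identically zero.

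Each zero locus $L_{i,\yy}^{-1}(0)$ is thus a proper Zariski-closed subvariety of $\mc Q$, and since $\mc Q$ is irreducible, the finite union of all these subvarieties (over $i\in[n]$ and $\yy\in\mc Y\cup\{\e_1,\dots,\e_n\}$) does not cover $\mc Q$. Any $Q$ in the complement yields $U:=U_0\widehat Q$ with $U\e_j$ nowhere-zero for each~$j$ (so $U$ is nowhere-zero) and $U\yy$ nowhere-zero for each $\yy\in \mc Y$, completing the proof that $\vv$ is generically realisable. The main obstacle is recognising the precise role of the hypothesis ``no entry equals~$1$'': it is what forces $v_{s_0}\ge 2$ and thus enables the sphere-transitivity argument. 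The method breaks down as soon as any $v_s=1$, since then the factor $SO(1)=\{1\}$ provides no rotational freedom.
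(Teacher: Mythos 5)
Your proof is correct and follows essentially the same route as the paper: rotate within each eigenspace by an element of $\prod_s SO(v_s)$, observe this preserves membership in $S(G)$, and use irreducibility of this variety to show the finitely many zero loci cannot cover it. The only differences are cosmetic — the paper phrases the non-vanishing step as an application of its Lemma~\ref{lem:orthogonal3} (whose proof is exactly your sphere-transitivity argument), and you are slightly more careful in adding $\e_1,\dots,\e_n$ to $\mc Y$ to guarantee the perturbed $U$ remains nowhere-zero.
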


\begin{proof}Generic realisability for $G$ is not changed by inserting or deleting zeros from $\vv=\npmatrix{v_1 & v_2&\ldots &v_{r} }\trans$, so we may assume that  $v_i\ge 2$ for all $i\in [r]$.
Let $\Lambda=(\lambda_1,\dots,\lambda_r)\in \bR^r$ with $\lambda_i < \lambda_{i+1}$ for $i\in [r-1]$. Since $\vv$ is sane for $G$, there is a nowhere-zero orthogonal matrix $U\in \R^{n \times n}$ where $n=|G|$ with $UD_{\Lambda,\vv} U\trans \in S(G)$.
Note that for $\mc X=(X_1,\dots,X_r)\in SO(\vv)$, we have $(\oplus_{i=1}^r X_i)D_{\Lambda,\vv} (\oplus_{i=1}^r X_i\trans)=D_{\Lambda,\vv}$, hence $U$ can be replaced by $U(\oplus_{i=1}^r X_i)$.

Let $\mc Y \subset \R^n\setminus\{0\}$ be finite. For every $\yy \in \mc Y$, and $j \in [n]$, we define linear functionals:
 \begin{align*}
 &L(j,\yy):SO(\vv)\to \bR \\
 &L(j,\yy)(\mc X):=(U(\oplus_{i=1}^r X_i)\yy )_{j}.
 \end{align*}
 If $\vv$ is not generically realisable, then there exists $U$ as above, and some finite set of nonzero vectors $\mc Y$ so that:
  \[SO(\vv)\subseteq \bigcup_{j \in [n], \yy \in \mc Y}L(j,\yy)^{-1}(0).\]
Irreducibility of the algebraic variety $SO(\vv)$ guarantees the existence of a  fixed $j_0 \in [n]$ and $\yy_0 \in \mc Y$ so that $SO(\vv)\subseteq L(j_0,\yy_0)^{-1}(0)$. For $\mc X=(X_1,\dots,X_r)\in SO(\vv)$, we can write:
\[L(j_0,\yy_0)(\mc X)=\tr \left((\yy_0\e_{j_0}\trans U) (\oplus_{i=1}^r X_i)\right).\]
Let $R_i:=[0_{v_i\times (v_1+\dots+v_i-1)}\;I_{v_i}\; 0_{v_i\times (v_{i+1}+\dots+v_r)}]$, $\xx_i:=R_i\yy_0$, $\uu_i:=R_iU\trans \e_{j_0}$, and $A_i:=\xx_i\uu_i\trans$. 
Since $U$ is nowhere-zero, we have $\uu_i\ne 0$ for every $i\in [r]$, and $\yy_0\ne 0$, so the set $Q=\{i\in [r]\colon \xx_i\ne 0\}$ is non-empty, and $L(j_0,\yy_0)(\mc X)=\sum_{i\in Q}\tr(A_iX_i)$ with $\rk A_i=1$ for all $i\in Q$. Since $v_i\ge 2$ for all $i$, this contradicts Lemma~\ref{lem:orthogonal3}.
\end{proof}

Our next task is to find an example of a sane multiplicity vector which is not generically realisable. First, a simple lemma.

\begin{lemma}\label{lem:bases}
  If ${\mathcal V}$ is a subspace of $\bR^n$ which contains a nowhere-zero vector,
  then there is a nowhere-zero orthonormal basis for ${\mathcal V}$.
\end{lemma}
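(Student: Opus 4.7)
The strategy is to fix an arbitrary orthonormal basis $u_1,\dots,u_d$ of $\mc V$ (with $d:=\dim\mc V\ge 1$, since $\mc V$ contains a nowhere-zero vector), assemble it into the $n\times d$ matrix $U:=[u_1\mid\dots\mid u_d]$, and then right-multiply by a suitable $R\in SO(d)$ to obtain a matrix $UR$ with no zero entries whose columns form the desired basis.

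The key observation is that no row of $U$ is identically zero: if the $i$th row of $U$ vanished, then every vector in $\mc V=U\bR^d$ would have a zero in its $i$th entry, contradicting the presence of a nowhere-zero vector in $\mc V$. When $d=1$ this already shows that $U$ itself is nowhere-zero, so we may take $R=1\in SO(1)$. For $d\ge 2$, for each $i\in [n]$ and $j\in [d]$ consider the linear functional $\phi_{ij}\colon SO(d)\to\bR$ defined by $\phi_{ij}(R):=(UR)_{ij}=\rho_i\trans R\e_j$, where $\rho_i\in\bR^d$ denotes the transpose of the $i$th row of $U$. As $R$ ranges over $SO(d)$, the column $R\e_j$ ranges over all unit vectors in $\bR^d$; combined with $\rho_i\ne 0$, this shows that $\phi_{ij}$ does not vanish identically on $SO(d)$, so $\phi_{ij}^{-1}(0)$ is a proper algebraic subvariety of the irreducible variety $SO(d)$. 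The finite union $\bigcup_{i,j}\phi_{ij}^{-1}(0)$ is therefore also a proper subvariety, and any $R\in SO(d)$ outside it produces a nowhere-zero matrix $UR$ whose columns form an orthonormal basis of $\mc V$.

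No substantial obstacle is anticipated; the argument is a short and direct application of the irreducibility technique for $SO(d)$ already employed in the proof of Theorem~\ref{thm:compatible sane}, with the only mildly delicate point being the separate (but trivial) handling of $d=1$, where $SO(1)$ is a single point and the generic-position argument degenerates.
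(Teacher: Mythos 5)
Your proof is correct, but it takes a genuinely different route from the paper. The paper argues by induction on the number of basis vectors already known to be nowhere-zero: starting from a normalised nowhere-zero vector extended to an orthonormal basis, it rotates each successive pair $(\bbb_{r-1},\bbb_r)$ through a small angle $t$, using only elementary continuity (for small $t>0$ each coordinate of the rotated vectors is nonzero because either the corresponding coordinate of $\bbb_r$ is already nonzero or the contribution $t(\bbb_{r-1})_i$ is). Your argument instead fixes one orthonormal basis as the columns of $U$, observes that no row of $U$ vanishes, and then applies the irreducibility of $SO(d)$ in one shot: each functional $\phi_{ij}(R)=\rho_i\trans R\e_j$ is nonzero somewhere on $SO(d)$ (transitivity of $SO(d)$ on unit vectors for $d\ge2$), so the finite union of the proper closed subvarieties $\phi_{ij}^{-1}(0)$ cannot exhaust the irreducible variety $SO(d)$, and any $R$ outside it works; the degenerate case $d=1$ is handled directly. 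Both proofs are complete. The paper's version is more elementary and self-contained, needing no algebraic geometry; yours is shorter, reuses the irreducibility machinery already deployed for Theorem~\ref{thm:compatible sane} and Proposition~\ref{prop:gen}, and has the incidental advantage of avoiding the delicate bookkeeping of keeping earlier vectors nowhere-zero while perturbing later ones (indeed, the paper's displayed rotation $\bbb_r'=t\bbb_{r-1}+\sqrt{1-t^2}\bbb_r$ is not actually orthogonal to $\bbb_{r-1}'$ as written and needs a sign fix, a pitfall your approach sidesteps entirely).
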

\begin{proof}
  The case $\dim {\mathcal V}=1$ is trivial. Assume inductively that $r=\dim {\mathcal V} \ge2$ and $\bbb_1,\dots,\bbb_r$  is an orthonormal basis of ${\mathcal V}$ for which $\bbb_1,\dots,\bbb_{r-1}$ are nowhere-zero. For $0<t<1$, replace $\bbb_{r-1}$ with $\bbb_{r-1}'=\sqrt{1-t^2}\bbb_{r-1}+ t\bbb_r$ and replace $\bbb_r$ with $\bbb_r'=t\bbb_{r-1}+\sqrt{1-t^2}\bbb_r$. Since $\bbb_{r-1}$ is nowhere-zero, for sufficiently small $t>0$ the vectors $\bbb_{r-1}'$ and $\bbb_r'$ are nowhere-zero.
\end{proof}

\begin{example}
We claim that $\vv=\npmatrix{1 & m & 1}\trans  \in \bN_0^{3}$, $m \geq 3$,  is sane, but not generically realisable for $K_2 \vee mK_1$, showing that the two concepts are indeed different.

In fact, $\vv$ is not generically realisable for $K_2\vee mK_1$ with respect to any list $\Lambda$ of three distinct eigenvalues. To see this, it suffices by translation and scaling to take $\Lambda=(-1,0,\lambda)$ where $\lambda>0$. 
So, let $X=\npmatrix{A & B\\ B\trans & D}\in S(K_2\vee mK_1)$
be a rank two matrix with simple eigenvalues $-1$ and $\lambda$, where $A\in S(K_2)$ and $D$ is diagonal. Since $\rank X=2$ and $m>2$, at least one diagonal entry of $D$ is zero. Further, $B$ is nowhere-zero and $\rank X=2$, and this in turn implies that $D=0$.  Considering the spectral decomposition $X=\lambda \xx\xx\trans- \ww\ww\trans$ where $\xx,\ww$ are orthogonal unit eigenvectors with eigenvalues $\lambda,-1$, we write
$$\xx=\npmatrix{\sqrt{1-s^2}\xx_1\\s\xx_2} \text{ and }\ww=\npmatrix{\sqrt{1-t^2}\ww_1\\t\ww_2}$$ where $0\le s,t\le 1$ and $\xx_i,\ww_i$ are unit vectors with $\xx_1,\ww_1\in \bR^2$ and $\xx_2,\ww_2\in \bR^m$. Now $D=\lambda s^2\xx_2\xx_2\trans- t^2\ww_2\ww_2\trans=0$, so $t=\sqrt\lambda s\ne 0$, $s\le \lambda^{-1/2}$ and $\ww_2=c \xx_2$ for some $c\in \{-1,1\}$, so we have
 $$\ww=\npmatrix{\sqrt{1-\lambda s^2}\ww_1\\ c \sqrt\lambda s\xx_2}.$$
 To see that $\vv$ is not generically realisable for $K_2\vee mK_1$, consider the set $\mc Y:=\{\yy_{-1},\yy_{1}\}$, where $\yy_{k}:= \e_1+k\sqrt \lambda\e_{m+2}$ for $k\in \{-1,1\}$. Any orthogonal matrix $U$ with $U\trans X U=D_{\{-1,0,\lambda\},\vv}$ has first and last columns equal to $a \ww$ and $b \xx$, respectively, for some $a,b\in \{-1,1\}$. Removing the first two entries of the vector $U\yy_{k}=a\ww + bk\sqrt{\lambda}\xx$ leaves $(ac  + bk )\sqrt{\lambda}s \xx_2=\mathbf{0}_m$ for $k=-abc$, so $U\yy_{-abc}$ is not nowhere-zero. This proves that $\vv$ is not generically realisable for $K_2\vee mK_1$.

 To prove that $\vv$ is sane for $K_2\vee mK_1$, it suffices for each $\lambda>0$ to  construct a matrix in $S(K_2\vee mK_1)$ with a nowhere-zero orthonormal eigenbasis, and eigenvalues $\lambda$ and  $-1$ with multiplicity $1$, and $0$ with multiplicity $m$. To deal with the $\lambda \neq 1$ case, consider the unit vectors $\xx_1=\frac1{\sqrt2}\1_2$ and $\xx_2=\frac1{\sqrt m}\1_m$, and the orthonormal vectors
\[\xx:=\frac1{\sqrt{1+\lambda}}\npmatrix{\sqrt{\lambda} \xx_1\\\xx_2}, \quad
   \ww:=\frac1{\sqrt{1+\lambda}}\npmatrix{-\xx_1\\\sqrt{\lambda}\xx_2}.\]
The rank two matrix $X:=\lambda \xx\xx\trans-\ww\ww\trans$ is then given by
\[ X=\npmatrix{(\lambda-1)\xx_1\xx_1\trans & \sqrt{\lambda}\xx_1\xx_2\trans\\
    \sqrt{\lambda}\xx_2\xx_1\trans& 0_m}\in S(K_2\vee mK_1).\]
%
%
It is easy to find a nowhere-zero vector orthogonal to both $\xx$ and $\ww$, so by Lemma~\ref{lem:bases}, the kernel of $X$ has a nowhere-zero orthonormal basis. Appending $\ww$ and $\xx$ gives a nowhere-zero orthonormal eigenbasis for $X$, as required.

The final case, when $\lambda=1$, can be resolved by the matrix $X:=\npmatrix{A&B\\B\trans&0_m}$ where $A=\tfrac1{20}\npmatrix{4&3\\3&-4}$ and $B=\tfrac{\sqrt{3}}{4\sqrt{m}}\npmatrix{\1_m\trans\\2\cdot \1_m\trans}$.
We can check that $X$ has rank $2$, nonzero eigenvalues $1$ and $-1$, and a nowhere-zero orthonormal eigenbasis, by straightforward calculation.
\end{example}

\section{Joins of unions of complete graphs}\label{sec:complete}

Proposition \ref{complete-generic} together with the solution to the IEP-G for complete graphs, emphasises that the spectra of matrices in $S(K_n)$ are the least constrained. We exploit this fact to determine $q$ of joins of unions of complete graphs, and derive some conditions for $q \neq 2$ in a more general setting.

\begin{proposition}\label{complete-generic}
  For $n\in \bN$, the complete graph $K_n$ is generically realisable.
\end{proposition}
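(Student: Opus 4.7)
The plan is a genericity/irreducibility argument on $SO(n)$. The case $n=1$ is trivial ($S(K_1)$ imposes no off-diagonal constraints), so assume $n\ge 2$, and let $\vv\in\bN_0^r$ be an arbitrary multiplicity vector for $K_n$. Since $S(K_n)$ contains no scalar matrix, $\vv$ must have at least two positive entries, and by Remark~\ref{0-1wb} any such $\vv$ is in fact sane for $K_n$. Fix an eigenvalue list $\Lambda$ and a finite set $\mc Y\subseteq\bR^n\setminus\{0\}$, and set $D:=D_{\Lambda,\vv}$. I want to find $U\in SO(n)$ that is nowhere-zero, satisfies $UDU\trans\in S(K_n)$, and has $U\yy$ nowhere-zero for every $\yy\in\mc Y$.

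Each requirement is a conjunction of finitely many polynomial non-vanishing conditions on $U\in SO(n)$: the entries $U_{ij}$, the off-diagonal entries $(UDU\trans)_{ij}$ for $i\ne j$, and the entries $(U\yy)_j$ for $\yy\in\mc Y$ and $j\in [n]$. The set of $U$ failing any one such condition is a closed algebraic subvariety of $SO(n)$. Since $SO(n)$ is irreducible (as recalled in the excerpt) and a finite union of proper closed subvarieties is proper, it is enough to show that each individual vanishing set is a proper subvariety of $SO(n)$, i.e.,~that each non-vanishing condition is satisfied by at least one $U\in SO(n)$.

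Two of the three types of condition are witnessed simultaneously by the sane realisation from Remark~\ref{0-1wb}: it yields a nowhere-zero $U_0\in O(n)$ with $U_0DU_0\trans\in S(K_n)$. Negating one column of $U_0$, i.e.,~replacing $U_0$ by $U_0E$ for a suitable diagonal sign matrix $E$, moves $U_0$ into $SO(n)$ without changing either the nowhere-zero property or the conjugate, since $EDE=D$ and so $(U_0E)D(U_0E)\trans=U_0DU_0\trans$. For the third type, given $\yy\ne 0$ and $j\in [n]$, extend $\yy/\|\yy\|$ to an orthonormal basis of $\bR^n$, place it as the $j$th row of an orthogonal matrix, and flip a sign (in a different row) if needed to land in $SO(n)$; the resulting $V$ satisfies $(V\yy)_j=\pm\|\yy\|\ne 0$. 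Thus every individual vanishing set is a proper subvariety, and any $U$ outside their finite union witnesses generic realisability of $\vv$ for $K_n$.

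The main conceptual point is that the defining pattern condition $UDU\trans\in S(K_n)$ is already a Zariski-open condition on $SO(n)$, so sanity propagates to generic realisability directly via the irreducibility of $SO(n)$, without any stabiliser-of-$D$ trick of the sort used in Proposition~\ref{prop:gen}, and in particular without any hypothesis on the multiplicities. There is no real obstacle: once one has organised the conditions and observed that they are individually open, the only non-trivial input is the sanity of $\vv$ for $K_n$, and this is already on the books.
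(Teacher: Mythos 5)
Your proof is correct, but it reaches the conclusion by a different mechanism than the paper. The paper's own proof is a short metric perturbation argument: starting from any orthogonal $U$ with $UDU\trans\in S(K_n)$ (spectral arbitrariness suffices; sanity is not invoked), it chooses an orthogonal $V$ arbitrarily close to $I_n$ making every $VU\yy$ nowhere-zero, and uses the fact that membership in $S(K_n)$ is an open condition to conclude $VUD(VU)\trans\in S(K_n)$. You instead run the Zariski-irreducibility argument on $SO(n)$ that the paper deploys elsewhere (for $SO(\p)$ in Theorem~\ref{thm:compatible sane} and for $SO(\vv)$ in Proposition~\ref{prop:gen}): all the required non-vanishing conditions are Zariski-open, each is individually satisfiable, and irreducibility lets you satisfy them simultaneously. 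Your witnesses are all sound: the sane realisation from Remark~\ref{0-1wb} (adjusted into $SO(n)$ by a column sign flip, which commutes past the diagonal $D$) handles the nowhere-zero and pattern conditions, and the row-completion construction handles $(U\yy)_j\ne0$. One point in your favour: the definition of generic realisability requires the final $U$ itself to be nowhere-zero, and you verify this explicitly, whereas the paper's proof leaves it implicit (it is easily repaired there by adjoining the standard basis vectors to $\mc Y$). The paper's route is shorter; yours is more uniform with the rest of the paper's machinery and makes every requirement of the definition visible.
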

\begin{proof}
  This is trivial for $n=1$. Let $n\ge2$ and recall (Remark \ref{0-1wb}) that any multiplicity list with sum~$n$ and at least two nonzero entries is spectrally arbitrary for $K_n$. Hence, if $D$ is any $n\times n$ diagonal matrix with $D\ne aI_n$, $a\in \bR$, then there is an orthogonal matrix $U$ so that $A:=UDU\trans \in S(K_n)$. For any finite set $\mc Y\subseteq \bR^n\setminus\{0\}$ we can find an orthogonal $V$ arbitrarily close to~$I_n$ so that $VU\yy$ is nowhere-zero for every $\yy\in \mc Y$. Since $A\in S(K_n)$, 
  for $V$ sufficiently close to~$I_n$, we also have $VAV\trans \in S(K_n)$. Hence, the multiplicity list of $D$ is generically realisable for $K_n$.
\end{proof}

We now introduce some notation. Assume throughout this section that $k, \ell \geq 1$ are positive integers. Let $\m=(m_1,\ldots,m_k)\in\bN^k$ denote a $k$-tuple of natural numbers.
We abbreviate a tuple such as $(m_1,\dots,m_s,a,\dots,a)$, with $a$ appearing $t$ times, as $(m_1,\dots,m_s,a^t)$.
We define $\km:=\bigcup_{i\in [k]}K_{m_i}$,
and $|\m|:=\sum_{i\in [k]}m_i=|\km|$. Clearly, $k\le |\m|$. Recall that $\iso (\km)$ denotes the set of isolated vertices of the graph $\km$, and let $\isa{\m}=|\{i\in [k] \colon m_i=1\}|=|\iso(\km)|$.
  Note that
  $ |\m|+\isa{\m}\geq 2k$.
    In particular, if $\isa{\m}=0$, then $|\m|\geq 2k$.

Next, we wish to prove that the number of distinct eigenvalues of a join of two unions of complete graphs can only be equal to $2$ or to $3$. To see this, we will use the following simple lemma, which follows from \cite[Theorem~3]{2017arxiv170802438} or \cite[Lemma~2.9]{MR3891770}.

\begin{lemma}\label{lem:jdup}
  Let $G$ be a graph with at least one edge, and let $v\in V(G)$. Let $H$ be the graph obtained from $G$ by replacing $v$ with a complete graph~$K_n$ and adding edges from every neighbour of $v$ in $G$ to every vertex of~$K_n$. Then $q(H)\le q(G)$.
\end{lemma}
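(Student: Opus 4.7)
The strategy is to take $A \in S(G)$ with $q(A) = q(G)$, split off the row and column indexed by~$v$, and expand the $(v,v)$ entry into a cleverly chosen $n \times n$ block so that no new distinct eigenvalues appear. Explicitly, write
\[A = \npmatrix{a & b\trans \\ b & A'},\]
where $a := a_{vv}$, the vector $b \in \bR^{|G|-1}$ has support equal to the neighbourhood of~$v$ in~$G$, and $A'$ is the principal submatrix of $A$ on $V(G)\setminus\{v\}$. In $H$, the vertex $v$ is replaced by an $n$-clique whose vertices all inherit the neighbourhood of $v$. I will look for an element of $S(H)$ of the form
\[B := \npmatrix{M & \tfrac{1}{\sqrt{n}}\1_n b\trans \\ \tfrac{1}{\sqrt{n}} b\1_n\trans & A'},\]
for a suitable $M \in \bR^{n\times n}$; the off-diagonal block has entries $b_j/\sqrt{n}$, giving exactly the correct nonzero pattern for~$H$ (rows indexed by new vertices, columns by $V(G)\setminus\{v\}$, nonzero precisely at neighbours of $v$).

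The key trick is to choose $M := \alpha I_n + \tfrac{a-\alpha}{n}\1_n\1_n\trans$ for some $\alpha \in \bR$. This symmetric matrix has constant off-diagonal entry $(a-\alpha)/n$, so $M \in S(K_n)$ precisely when $\alpha \ne a$. Let $U$ be any orthogonal $n \times n$ matrix with first column $\1_n/\sqrt{n}$; a short computation gives $U\trans M U = \diag(a,\alpha,\dots,\alpha)$ and $U\trans \1_n = \sqrt{n}\,\e_1$. Conjugating $B$ by $U \oplus I$ therefore yields
\[\npmatrix{a & 0 & b\trans \\ 0 & \alpha I_{n-1} & 0 \\ b & 0 & A'},\]
which, after a permutation of rows and columns, is $A \oplus \alpha I_{n-1}$. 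Hence $\sigma(B)$ consists of $\sigma(A)$ together with $\alpha$ of multiplicity $n-1$, so choosing $\alpha$ to be any eigenvalue of $A$ forces $q(B) \le q(A) = q(G)$, yielding $q(H) \le q(G)$.

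The only point requiring care is confirming that a suitable $\alpha$ exists, i.e.~an eigenvalue of $A$ distinct from $a$. This is where the hypothesis that $G$ has at least one edge enters: otherwise $A = aI$ would have all off-diagonal entries zero, contradicting the presence of an edge. So $A$ has at least two distinct eigenvalues, at most one equal to $a$, and a suitable $\alpha$ is always available. (The degenerate case $n=1$ is trivial, since then $H=G$.) I~do not foresee any substantive obstacle: the whole argument reduces to the observation that, in the new $K_n$-block of $B$, the symmetric direction $\1_n/\sqrt{n}$ can be made to reconstruct the original $A$ exactly, while its orthogonal complement contributes only additional copies of the single scalar eigenvalue~$\alpha$.
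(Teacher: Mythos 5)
Your proof is correct, and every step checks out: the off\-/diagonal block $\tfrac{1}{\sqrt n}\1_n b\trans$ has exactly the right zero pattern, $M=\alpha I_n+\tfrac{a-\alpha}{n}\1_n\1_n\trans$ lies in $S(K_n)$ precisely when $\alpha\ne a$, the conjugation by $U\oplus I$ with $U\e_1=\1_n/\sqrt n$ does reduce $B$ to a matrix permutation-similar to $A\oplus\alpha I_{n-1}$, and the hypothesis that $G$ has an edge is used exactly where it is needed, to guarantee $q(A)\ge 2$ and hence the existence of an eigenvalue $\alpha$ of $A$ with $\alpha\ne a$. The one point of comparison to make is that the paper does not prove this lemma at all: it simply records that the statement ``follows from'' results in two cited references (a vertex-duplication theorem and a lemma on clique substitution). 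Your argument is essentially the construction that underlies those cited results --- blow up the row and column of $v$ along the symmetric direction $\1_n/\sqrt n$, so that this direction reconstructs $A$ exactly while its orthogonal complement contributes only $n-1$ extra copies of a single already-present eigenvalue --- so you have supplied a short, self-contained proof of a fact the paper outsources. In particular your version makes visible the small but genuine role of the ``at least one edge'' hypothesis, which is invisible when the lemma is used as a black box.
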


 \begin{corollary}\label{cor:complete-jdup}
   Let $\m, \tuple m' \in \bN^k$, $\n, \tuple n' \in \bN^\ell$, where 
   $\tuple n'\ge \tuple n$ and $\tuple m'\ge \tuple m$.
If $q (\km \vee \kn)=2$, then $q (\kx{\m'} \vee \kx{\n'})=2$.
\end{corollary}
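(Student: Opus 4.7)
The plan is to deduce this directly from Lemma~\ref{lem:jdup} by iteratively enlarging the complete-graph components one at a time. The key claim I want to establish is: \emph{if we increase a single $m_i$ to any strictly larger value (keeping everything else fixed), then $q$ of the resulting graph remains at most $2$.} Once this claim is in hand, I can iterate it over $i\in[k]$ to pass from $\m$ to $\m'$, and then run the symmetric argument over $j\in[\ell]$ to pass from $\n$ to $\n'$, arriving at $q(\kx{\m'}\vee\kx{\n'}) \le 2$.

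To prove the claim, I would fix $i\in[k]$ and a target size $m_i'' > m_i$, choose any vertex $v$ in the $K_{m_i}$ component of $\km\vee\kn$, and apply Lemma~\ref{lem:jdup} with parameter $s:=m_i''-m_i+1\ge 2$. The neighbours of $v$ in $\km\vee\kn$ are precisely the remaining $m_i-1$ vertices of the same $K_{m_i}$ component (vacuous if $m_i=1$), together with every vertex of $\kn$. Replacing $v$ with a copy of $K_s$ and joining every neighbour of $v$ to every new vertex therefore turns the $K_{m_i}$ component into a complete graph on $m_i-1+s=m_i''$ vertices, each of which is still joined to every vertex of $\kn$, while all other components of $\km$ and the whole of $\kn$ are unchanged. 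So the new graph is exactly $\kx{(m_1,\dots,m_{i-1},m_i'',m_{i+1},\dots,m_k)}\vee\kn$, and Lemma~\ref{lem:jdup} yields $q\le q(\km\vee\kn)=2$.

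Iterating over all $i\in[k]$ with $m_i'>m_i$ gives $q(\kx{\m'}\vee\kn)\le 2$; the same procedure on the $\kn$-side then yields $q(\kx{\m'}\vee\kx{\n'})\le 2$. Since $\kx{\m'}\vee\kx{\n'}$ has at least one edge (any vertex of $\kx{\m'}$ is joined to any vertex of $\kx{\n'}$), we also have $q\ge 2$, so equality holds. I do not anticipate any real obstacle: the only things to check are that Lemma~\ref{lem:jdup}'s hypothesis ``at least one edge'' is maintained throughout the iteration (immediate, since joins with at least one vertex on each side always carry edges), and that the arithmetic $m_i-1+s=m_i''$ correctly accounts for the vertex replacement.
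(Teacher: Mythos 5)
Your proof is correct and follows essentially the same route as the paper: the paper likewise obtains $\kx{\m'}\vee\kx{\n'}$ from $\km\vee\kn$ by applying Lemma~\ref{lem:jdup} successively to one chosen vertex per component, and your version merely spells out the arithmetic of the replacement and the final observation that $q\ge 2$.
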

\begin{proof}
  The graph $\kx{\m'} \vee \kx{\n'}$ may be obtained from $\km\vee\kn$ by choosing a set of $k+\ell$ vertices (one from each component of $\km$ and one from each component of $\kn$) and applying the procedure described in Lemma~\ref{lem:jdup} successively to each one.
\end{proof}

\begin{corollary}\label{cor:complete}
  If $\m\in\bN^k$ and $\n\in\bN^\ell$, then
  \[ q(\km\vee \kn)=
    \begin{cases}
      2&\text{if $\km$ and $\kn$ have compatible multiplicity matrices,}\\
      3&\text{otherwise}.
    \end{cases}\]
\end{corollary}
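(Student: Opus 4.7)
The plan is to combine the characterization from Theorems~\ref{thm:q=2} and~\ref{thm:compatible sane} (giving an ``iff'' for $q=2$) with a uniform upper bound $q(\km\vee\kn)\le 3$ that comes from a blow-up argument. The observation that makes the complete-graph-union case tractable is Proposition~\ref{complete-generic}: every $K_n$ is generically realisable, and hence (by Remark~\ref{0-1wb}) sane. Consequently, \emph{every} multiplicity matrix for $\km$ is automatically generically realisable and sane, and likewise for $\kn$.

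First I would deal with the characterization of $q=2$. If $\km$ and $\kn$ admit a pair of compatible multiplicity matrices $V\in\bN_0^{r\times k}$ and $W\in\bN_0^{r\times \ell}$, then by the preceding paragraph $V$ is generically realisable for $\km$ and $W$ is sane for $\kn$. Hypothesis~\ref{c2} of Theorem~\ref{thm:compatible sane} is therefore satisfied, giving $q(\km\vee\kn)\le 2$. Since $k,\ell\ge 1$ the graph $\km\vee\kn$ has at least one edge, so $q(\km\vee\kn)\ge 2$ and thus equals $2$. Conversely, if no such compatible pair exists, then Theorem~\ref{thm:q=2} immediately yields $q(\km\vee\kn)\ne 2$, and hence $q(\km\vee\kn)\ge 3$.

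Next I would establish the unconditional upper bound $q(\km\vee\kn)\le 3$ by a blow-up argument. The graph $\km\vee\kn$ is obtained from the complete bipartite graph $K_{k,\ell}=kK_1\vee \ell K_1$ by successively applying the vertex-blow-up operation of Lemma~\ref{lem:jdup}: one blows the $i$-th vertex on the left side up to $K_{m_i}$ and the $j$-th vertex on the right side up to $K_{n_j}$. Each such step preserves or decreases $q$ by Lemma~\ref{lem:jdup}. The adjacency matrix of $K_{k,\ell}$ has rank two with spectrum $\{\sqrt{k\ell},-\sqrt{k\ell},0\}$, so $q(K_{k,\ell})\le 3$. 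Iterating Lemma~\ref{lem:jdup} gives $q(\km\vee\kn)\le q(K_{k,\ell})\le 3$.

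Putting everything together, $q(\km\vee\kn)\in\{2,3\}$, with the value $2$ occurring precisely when $\km$ and $\kn$ have compatible multiplicity matrices, and the value $3$ in all other cases. I would not expect a substantial obstacle: both ingredients are already in place, and the main conceptual point is that Proposition~\ref{complete-generic} makes the generic-realisability hypothesis of Theorem~\ref{thm:compatible sane}\ref{c2} automatic here, thereby bypassing the more technical condition~\ref{c1} which would otherwise need to be verified pairwise.
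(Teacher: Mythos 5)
Your proposal is correct and follows essentially the same route as the paper: the equivalence for $q=2$ via Proposition~\ref{complete-generic} feeding into Theorems~\ref{thm:q=2} and~\ref{thm:compatible sane} (the paper packages this as Corollary~\ref{compat-iff}), and the bound $q\le 3$ by blowing up $K_{k,\ell}$ with Lemma~\ref{lem:jdup}. The only cosmetic difference is that you verify $q(K_{k,\ell})\le 3$ directly from the rank-two adjacency matrix, whereas the paper cites an external reference for this fact.
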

\begin{proof}
  Since $\km\vee\kn$ has at least one edge, $q(\km\vee\kn)\ge 2$. Moreover, $q(k K_1  \vee \ell K_1)=q(K_{k,l})\in \{2,3\}$,         by \cite[Corollary~6.5]{MR3118943}. Applying Lemma~\ref{lem:jdup} a total of $k+\ell$ times, we obtain
  $2\le q(\km \vee \kn)\leq q(k K_1  \vee \ell K_1)\le 3$,
  so $q(\km\vee\kn)\in \{2,3\}$.

  By Proposition \ref{complete-generic}, $\km$ and $\kn$ are generically realisable (and, hence, they are also sane). By Corollary~\ref{compat-iff}, $q(\km\vee\kn)=2$ if and only if $\km$ and $\kn$ have compatible multiplicity matrices.
  %
\end{proof}

\begin{remark}
 Corollary \ref{cor:complete-jdup} can also be deduced easily from Corollary~\ref{cor:complete} by considering multiplicity matrices.
Given $\m,\m',\n,\n'$ as in Corollary \ref{cor:complete-jdup}, suppose
 $V \in \bN_0^{r \times k}$ and $W \in \bN_0^{r \times \ell}$ are compatible multiplicity matrices for $\km$ and $\kn$,  respectively. Then $V':=V+\e_1(\m'-\m)\trans$ and $W':=W+\e_1(\n'-\n)\trans$ are  multiplicity matrices for $\kx{\m'}$ and $\kx{\n'}$, respectively, and  $\widetilde V=\widetilde{V'}$ and $\widetilde W=\widetilde{W'}$, so $V'$ and $W'$ are compatible. Hence, if $q(\km\vee\kn)=2$, then $q(\kx{\m'}\vee\kx{\n'})=2$.
%
%
 \end{remark}


%
%
%

The following easy observations about multiplicity matrices are worth recording at this point.

\begin{proposition}\label{prop:mult-complete}
  Let $k,r\in \bN$ with $r\ge3$.
  \begin{enumerate}
  \item   For $\m\in \bN^k$, a matrix $V\in \bN_0^{r\times k}$ is a multiplicity matrix for $\km$ if and only if, for every $i\in [k]$, we have $\1_r\trans V\e_i=m_i$, and if $m_i\ge 2$, then $V\e_i$ has at least two nonzero entries.
  \item If $V$ is a multiplicity matrix for a graph $G=\bigcup_{i=1}^kG_i$, where each $G_i$ is connected, 
    then $V$ is a multiplicity matrix for $\km$ where $\m:=(|G_1|,\dots,|G_k|)$.
  \item If $V$ and $W$ are compatible matrices, then $\widetilde V$ and $\widetilde W$ have at least one nonzero entry in each column.
  \end{enumerate}
\end{proposition}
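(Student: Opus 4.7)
My plan is to handle the three parts independently, with part (b) reducing to part (a) and part (c) being a direct calculation. The only substantive ingredient is the well-known fact, already recorded in Remark~\ref{0-1wb}, that any nonnegative integer vector of sum $n\ge 2$ with at least two nonzero entries is spectrally arbitrary (hence in particular realisable) for $K_n$.

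For part~(a), the forward implication is essentially by unpacking definitions. If $V$ is a multiplicity matrix for $\km$, then each column $V\e_i$ is a multiplicity vector for $K_{m_i}$, so $\1_r\trans V\e_i = m_i$; moreover, for $m_i\ge 2$, any matrix $A\in S(K_{m_i})$ realising the column has a nonzero off-diagonal entry, hence is not a scalar multiple of the identity, and therefore has at least two distinct eigenvalues, giving at least two nonzero entries in $V\e_i$. For the converse, when $m_i=1$ the column is forced to be a standard basis vector, which is trivially a multiplicity vector for $K_1$; when $m_i\ge 2$, the cited result from \cite[Theorem~4.5]{MR3208410} produces a matrix in $S(K_{m_i})$ with precisely the prescribed ordered multiplicity list, so $V\e_i$ is a multiplicity vector for $K_{m_i}$. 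Hence $V$ is a multiplicity matrix for $\km$.

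For part~(b), write $\vv_i := V\e_i$; by assumption $\vv_i$ is a multiplicity vector for the connected graph $G_i$, so $\1_r\trans \vv_i = |G_i| = m_i$. If $m_i\ge 2$, then $G_i$ being connected forces it to contain an edge, so any matrix in $S(G_i)$ realising $\vv_i$ is non-scalar and hence has at least two distinct eigenvalues; so $\vv_i$ has at least two nonzero entries. The criterion from part~(a) then immediately shows $V$ is a multiplicity matrix for $\km$.

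For part~(c), suppose to the contrary that some column $i$ of $\widetilde V$ is entirely zero. Then for every $j\in [\ell]$ we have $(\widetilde V\trans \widetilde W)_{ij} = \sum_s (\widetilde V)_{si}(\widetilde W)_{sj} = 0$, contradicting $\widetilde V\trans \widetilde W > 0$; the argument for $\widetilde W$ is symmetric. I do not anticipate any genuine obstacle: the main step is invoking the spectral flexibility of $S(K_n)$ recorded in Remark~\ref{0-1wb}, and everything else is bookkeeping.
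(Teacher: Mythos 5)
Your proof is correct, and since the paper records this proposition as an ``easy observation'' without supplying a proof, your argument is exactly the intended one: part (a) follows from the spectral arbitrariness of $K_n$ recorded in Remark~\ref{0-1wb} together with the observation that a non-scalar symmetric matrix has at least two distinct eigenvalues, part (b) reduces to (a), and part (c) is immediate from $\widetilde V\trans\widetilde W>0$. No issues.
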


\begin{corollary}\label{cor:K-to-connected-q=3}
  Let $G=\bigcup_{i=1}^k G_i$, $H=\bigcup_{j=1}^\ell H_j$, $\m=(m_1,\ldots,m_k)$ and $\n=(n_1,\ldots,n_\ell)$, where $G_i$ and $H_j$ are connected graphs with  $|G_i|=m_i$ and $|H_j|=n_j$. 
  If $q(\km \vee   \kn)= 3$, then $q(G\vee H)\geq 3$.
\end{corollary}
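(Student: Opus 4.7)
The plan is to prove the contrapositive: assuming $q(G\vee H)\le 2$, show that $q(\km\vee\kn)\le 2$. Since $G\vee H$ contains at least one edge (any edge between $G$ and $H$), we automatically have $q(G\vee H)\ge 2$, so the assumption reduces to $q(G\vee H)=2$.

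First, I would apply Theorem~\ref{thm:q=2} to conclude that $G$ and $H$ have compatible multiplicity matrices $V\in\bN_0^{r\times k}$ and $W\in\bN_0^{r\times \ell}$ for some $r\ge 3$. The key observation is then Proposition~\ref{prop:mult-complete}(b): since each connected component $G_i$ of $G$ has order $m_i$ and each $H_j$ has order $n_j$, the same matrices $V$ and $W$ are automatically multiplicity matrices for $\km=\bigcup_{i\in[k]}K_{m_i}$ and $\kn=\bigcup_{j\in[\ell]}K_{n_j}$, respectively. Compatibility is a property of the matrices $V$ and $W$ themselves (namely $\widetilde V\1_k=\widetilde W\1_\ell$ and $\widetilde V\trans\widetilde W>0$), so it is preserved when we reinterpret $V,W$ as multiplicity matrices for the unions of complete graphs.

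Finally, Corollary~\ref{cor:complete} tells us that, since $\km$ and $\kn$ have compatible multiplicity matrices, $q(\km\vee\kn)=2$, contradicting the hypothesis $q(\km\vee\kn)=3$.

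There is no real obstacle here; the result is essentially a transfer principle based on the fact that the multiplicity matrices of a graph only see the sizes of its connected components together with the constraints imposed by each individual component, and a component $G_i$ of order $m_i$ imposes no more restrictions on its list of ordered multiplicities than $K_{m_i}$ does (indeed, strictly fewer). The novelty is in assembling the three earlier results (Theorem~\ref{thm:q=2}, Proposition~\ref{prop:mult-complete}(b) and Corollary~\ref{cor:complete}) into one short argument.
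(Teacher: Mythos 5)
Your proposal is correct and follows exactly the paper's own argument: assume $q(G\vee H)=2$, invoke Theorem~\ref{thm:q=2} to get compatible multiplicity matrices, transfer them to $\km$ and $\kn$ via Proposition~\ref{prop:mult-complete}, and conclude $q(\km\vee\kn)=2$ by Corollary~\ref{cor:complete}, a contradiction. Nothing to add.
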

\begin{proof}
  Since $G\vee H$ has at least one edge, $q(G\vee H)\ge2$. If $q(G\vee H)=2$, then by Theorem~\ref{thm:q=2}, there is a pair $V,W$ of compatible multiplicity matrices for $G$ and $H$. By Proposition~\ref{prop:mult-complete}, these are multiplicity matrices for $\km$ and $\kn$, respectively. Hence, by 
Corollary~\ref{cor:complete},
we have $q(\km\vee\kn)= 2$, a contradiction.
\end{proof}

\begin{remark}
 In contrast with complete graphs, paths $P_n$ are realisable by matrices $A \in S(P_n)$ with the most constrained spectrum.  We will explore this in our follow up paper \cite{LOS21}, where it will be shown that $q(\bigcup_{i\in [k]}P_{m_i} \vee  \bigcup_{j\in [\ell]}P_{n_j})=2$ implies $q(G\vee H)=2$,
where $G$ and $H$ are as in Corollary~\ref{cor:K-to-connected-q=3}.
\end{remark}

 The following straightforward lemma is used several times in our arguments below.  The proof is left to the reader.

  \begin{lemma}\label{lem:major}
    If $s,t \in \bN$ and $\tuple{c} \in \bN^s$, then there exists $\tuple{b} \in \bN^s$ with $|\tuple{b}|=t$ and $\tuple{b} \leq \tuple{c}$ if and only if $s \leq t\leq |\tuple{c}|$.
\end{lemma}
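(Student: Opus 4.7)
The lemma is elementary but the plan is straightforward. For the forward implication (necessity), the plan is to unpack the two conditions on $\tuple{b}$ directly. Since $\tuple{b} \in \bN^s$ means every entry $b_i \geq 1$, we immediately get $t = |\tuple{b}| = \sum_{i=1}^s b_i \geq s$. Since $\tuple{b} \leq \tuple{c}$ componentwise, summing yields $t = |\tuple{b}| \leq |\tuple{c}|$. These two inequalities together give $s \leq t \leq |\tuple{c}|$.

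For the reverse implication (sufficiency), the plan is to construct $\tuple{b}$ explicitly by a simple greedy procedure. Start with $\tuple{b}_{(0)} := \1_s$, which lies in $\bN^s$ and satisfies $\tuple{b}_{(0)} \leq \tuple{c}$ (since each $c_i \geq 1$) and has sum $s \leq t$. We then need to add $t - s \geq 0$ to the total. The total room available is $\sum_{i=1}^s (c_i - 1) = |\tuple{c}| - s \geq t - s$, so there is enough slack. Concretely, process the indices $i = 1, 2, \dots, s$ in order and at step $i$ increase $b_i$ from $1$ to $\min(c_i, 1 + (t - \text{current sum so far from earlier increases plus remaining ones}))$; equivalently, greedily raise $b_i$ to the largest value $\leq c_i$ that keeps the running sum from exceeding $t$. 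Since the total cap $|\tuple{c}|$ is at least $t$, the running sum reaches exactly $t$ before or at index $s$, and then subsequent entries remain at $1$. The resulting vector satisfies $\tuple{b} \in \bN^s$, $\tuple{b} \leq \tuple{c}$, and $|\tuple{b}| = t$.

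There is no real obstacle here — the only thing to be a bit careful about is that the greedy step produces positive integers at each coordinate (which is automatic since we only ever raise entries above $1$, never below), and that the total can exactly hit $t$ rather than overshoot (which is ensured by the $\min$ capping). This is why the author leaves the proof to the reader.
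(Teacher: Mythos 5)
Your proof is correct and complete: the necessity direction follows by summing the componentwise bounds $1\le b_i\le c_i$, and the sufficiency direction by the greedy distribution of the surplus $t-s$ subject to the per-coordinate caps $c_i-1$, whose total $|\tuple{c}|-s$ is large enough. The paper explicitly leaves this proof to the reader, and your argument is exactly the elementary one the authors intend.
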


 Now that we have established the underlying framework, we are left with the combinatorial question of deciding under what conditions on $\m$ and $\n$ compatible multiplicity matrices for $\km$ and $\kn$ exist. Our first result in this direction gives a sufficient condition. 


\begin{proposition}\label{prop:q2-compatible matrices}
   For $\m\in\bN^k$, $\n\in \bN^{\ell}$,  the following conditions are equivalent:
  \begin{enumerate}
  \item \label{a1} $k+\ell\le \min\{|\m|+\isa{\m},|\n|+\isa{\n}\}$.
  \item \label{a2} $\km$ and $\kn$ have a compatible pair of multiplicity matrices in $\bN_0^{3\times k}$ and $\bN_0^{3\times \ell}$, respectively;
  \end{enumerate}
  Moreover, if either \ref{a1} or \ref{a2} is valid, then $q(\km\vee \kn)=2$.
\end{proposition}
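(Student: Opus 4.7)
The plan is to exploit the structural simplification that comes from restricting to $r=3$, where $\widetilde V$ and $\widetilde W$ collapse to single row vectors. Compatibility then amounts to: (i) the scalars $\widetilde V\1_k$ and $\widetilde W\1_\ell$ agree, and (ii) every entry of the middle rows of both $V$ and $W$ is strictly positive (this is what forces the rank-one outer product $\widetilde V\trans\widetilde W$ to be entrywise positive).

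First I would translate the structural conditions on a multiplicity matrix $V\in \bN_0^{3\times k}$ for $\km$, via Proposition~\ref{prop:mult-complete}, into constraints on its middle entries $v_{2i}$. Each column sums to $m_i$ and, if $m_i\ge 2$, has at least two nonzero entries. Combined with the compatibility requirement $v_{2i}\ge 1$, this forces $v_{2i}=1$ whenever $m_i=1$, and $1\le v_{2i}\le m_i-1$ whenever $m_i\ge 2$. In particular, setting $u_i:=m_i-1$ when $m_i\ge 2$ and $u_i:=1$ when $m_i=1$, we get $\sum_i u_i=|\m|+\isa{\m}-k$.

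Next I would carry out the two implications. For \ref{a2}$\Rightarrow$\ref{a1}: if compatible $V,W$ exist, set $t:=\widetilde V\1_k=\widetilde W\1_\ell$. From the constraints above, $k\le t\le |\m|+\isa{\m}-k$ and $\ell\le t\le |\n|+\isa{\n}-\ell$, so in particular $t\ge\ell$ gives $k+\ell\le |\m|+\isa{\m}$ and $t\ge k$ gives $k+\ell\le |\n|+\isa{\n}$, which is \ref{a1}. For \ref{a1}$\Rightarrow$\ref{a2}: condition \ref{a1} together with the automatic inequalities $2k\le |\m|+\isa{\m}$ and $2\ell\le |\n|+\isa{\n}$ ensures the interval $[\max(k,\ell),\min(|\m|+\isa{\m}-k,|\n|+\isa{\n}-\ell)]$ is nonempty; pick any integer $t$ in it. Applying Lemma~\ref{lem:major} with $s=k$, $t$, and $\tuple c = (u_i)_{i\in [k]}$ produces middle entries $(v_{2i})\in \bN^k$ summing to $t$ with $v_{2i}\le u_i$; similarly one obtains $(w_{2j})$ for $\n$. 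The first and third rows of $V$ can then be filled in freely so that column $i$ sums to $m_i$ and (for $m_i\ge 2$) has at least one further nonzero entry, since $v_{2i}\le m_i-1$. This produces compatible multiplicity matrices.

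For the ``Moreover'' clause, once \ref{a2} is established it suffices to invoke Corollary~\ref{compat-iff} together with Proposition~\ref{complete-generic}, which guarantees that both $\km$ and $\kn$ are generically realisable (hence in particular sane), giving $q(\km\vee\kn)=2$. The only genuinely combinatorial obstacle is verifying that the interval for $t$ is nonempty under \ref{a1}; this is where one must use the standing inequality $|\m|+\isa{\m}\ge 2k$ from the notation paragraph, rather than deriving it from \ref{a1} alone.
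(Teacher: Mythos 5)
Your proposal is correct and follows essentially the same route as the paper: for \ref{a2}$\Rightarrow$\ref{a1} you bound the common middle-row sum $t$ between $\max\{k,\ell\}$ and $\min\{|\m|+\isa{\m}-k,\,|\n|+\isa{\n}-\ell\}$ exactly as the paper does, and for \ref{a1}$\Rightarrow$\ref{a2} you build $3$-row matrices via Lemma~\ref{lem:major}, differing only in that you apply the lemma symmetrically to both $\m$ and $\n$ for an arbitrary admissible $t$, whereas the paper assumes $k\le\ell$, takes $t=\ell$, and uses an all-ones middle row for $W$. Your observation that the nonemptiness of the interval for $t$ needs the standing inequality $|\m|+\isa{\m}\ge 2k$ (not just \ref{a1}) is accurate, and the ``moreover'' clause via Proposition~\ref{complete-generic} and Corollary~\ref{compat-iff} matches the paper's appeal to Corollary~\ref{cor:complete}.
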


\begin{proof}
  Suppose \ref{a2} holds and $V\in \bN_0^{3\times k}$, $W\in \bN_0^{3\times \ell}$ are compatible multiplicity matrices for $\km$, $\kn$, respectively. Let $p:=\widetilde V\1_k=\widetilde W\1_\ell\in \bN$. Since $\widetilde V\trans\widetilde W>0$, every entry of the row vectors $\widetilde V$ and $\widetilde W$ is at least $1$, so $p\ge\max\{ k,\ell\}$. Every column of $V$ corresponds to a connected component $K_{m_i}$ of $\km$ and is of the form $(a_i,b_i,c_i)\trans$ where $b_i\ge1$ and $a_i+b_i+c_i=m_i$. If $m_i=1$ for some $i$, 
  then the corresponding column is necesarily equal to $(0,1,0)\trans$. For columns with $m_i>1$ we have $a_i+c_i\ge1$ by Proposition~\ref{prop:mult-complete}, so $1 \leq b_i\le m_i-1$. Summing up, we obtain
  $  p=\sum_{i\in [k]}b_i\le \isa{\m}+\sum_{i:m_i>1}(m_i-1)=|\m|+\isa{\m}-k$,
and similarly $p\le |\n|+\isa{\n}-\ell$. Since $\max\{k,\ell\} \le p$, we get~\ref{a1}.

  Now suppose the inequality \ref{a1} holds, and without loss of generality assume $k \leq \ell$.
  Then $k \leq \ell\leq |\m|-k +\isa{\m}=\sum_{i\in [k]}\max\{1,m_i-1\}$, and there exist $t_1,\dots,t_k\in \bN$ with $t_i\le \max\{1,m_i-1\}$ for $i\in [k]$ and $\sum_{i\in [k]} t_i=\ell$, by Lemma \ref{lem:major}.
Now
\[ V:=\npmatrix{m_1-t_1&\dots&m_k-t_k\\t_1&\dots&t_k\\0&\dots&0}\in \bN_0^{3\times k}\text{ and } W:=\npmatrix{n_1-1&\dots&n_\ell-1\\1&\dots&1\\0&\dots&0}\in \bN_0^{3\times \ell}\]
 are compatible multiplicity matrices for $\km$ and $\kn$, respectively. Hence \ref{a2} holds.

The final claim is immediate from Corollary~\ref{cor:complete}.
\end{proof}

The next result follows from \cite[Theorem~4.4]{MR3118943}. We present an alternative proof using our methods.

\begin{proposition}\label{prop:comp vs vertices}
Let $\m\in\bN^k$, $\n\in \bN^{\ell}$. If $|\m| < \ell$, then
$q(\km\vee\kn)= 3$.
 \end{proposition}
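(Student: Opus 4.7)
The plan is to argue by contradiction, using Corollary~\ref{cor:complete} which already tells us that $q(\km \vee \kn) \in \{2,3\}$ and that $q(\km \vee \kn) = 2$ holds if and only if $\km$ and $\kn$ admit a pair of compatible multiplicity matrices. Thus it suffices to show that, under the hypothesis $|\m| < \ell$, no such pair exists.

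Suppose for contradiction that $V \in \bN_0^{r\times k}$ and $W \in \bN_0^{r\times \ell}$ were compatible (necessarily $r \ge 3$). The plan is to exploit the defining conditions $\widetilde V\1_k = \widetilde W\1_\ell$ and $\widetilde V\trans \widetilde W > 0$ to count total entries in two different ways. The positivity of $\widetilde V\trans \widetilde W$ forces every column of $\widetilde W$ to contain at least one nonzero (hence $\ge 1$) entry, since otherwise the corresponding column of $\widetilde V\trans \widetilde W$ would vanish. This yields the lower bound $\1\trans \widetilde W \1_\ell \ge \ell$.

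On the other hand, $\widetilde V$ is obtained from $V$ by deleting two rows, so $\1\trans \widetilde V \1_k \le \1_r\trans V \1_k = |\m|$, where the last equality uses that $V$ is a multiplicity matrix for $\km$ (so its column sums are $m_1, \dots, m_k$). Combining with $\widetilde V\1_k = \widetilde W\1_\ell$, we obtain
\[ \ell \le \1\trans \widetilde W \1_\ell = \1\trans \widetilde V \1_k \le |\m|, \]
contradicting $|\m| < \ell$. Hence no compatible pair exists, and $q(\km \vee \kn) = 3$.

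I do not anticipate any real obstacle: the proof is a short double-counting on the column sums of $\widetilde V$ and $\widetilde W$, using only the two defining conditions of compatibility and the fact that a multiplicity matrix for $\km$ has column sums equal to the component orders.
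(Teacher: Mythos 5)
Your proof is correct and takes essentially the same route as the paper: the paper also argues that $\widetilde V\trans\widetilde W>0$ forces $\widetilde W$ to have no zero columns, hence $\1_{r-2}\trans\widetilde W\ge\1_\ell\trans$, and then combines $\widetilde V\1_k=\widetilde W\1_\ell$ with $\1_{r-2}\trans\widetilde V\1_k\le\1_r\trans V\1_k=|\m|$ to get $\ell\le|\m|$, a contradiction. No gaps.
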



 \begin{proof}
Otherwise, by Corollary \ref{cor:complete}, there exist compatible multiplicity matrices $V\in \bN_0^{r \times k}$ and $W\in \bN_0^{r \times \ell}$ for $\km$ and $\kn$, respectively. Then $\widetilde V\trans \widetilde W>0$, which implies that $\widetilde W$ has no zero columns, so $\1_{r-2}\trans \widetilde W \geq \1_{\ell}\trans$. Since $\widetilde V \1_k=\widetilde W \1_\ell$, it follows that
$\ell =\1_{\ell}\trans \1_\ell \leq  \1_{r-2}\trans \widetilde W \1_\ell=\1_{r-2}\trans \widetilde V \1_k \leq \1_r\trans V \1_k= |\km|=|\m|$,
a contradiction.
 \end{proof}

Note that there is a gap between the sufficient conditions for $q=2$ given in Proposition~\ref{prop:q2-compatible matrices} and the necessary conditions for $q=2$ that follow from Proposition \ref{prop:comp vs vertices}. It turns out that isolated vertices play an important role in the complete solution, and we consider different cases that can occur below.

 \subsection{No isolated vertices}\label{subsec:no iso}
First we examine the case when at least one of $\km$ and $\kn$ has no isolated vertices. In particular, when neither one of these graphs has an isolated vertex we will see in Proposition~\ref{prop:complete no iso} that the sufficient condition in Proposition~\ref{prop:comp vs vertices} for $q(\km\vee \kn)$ to be 3  
is also necessary. 

\begin{lemma}
  \label{lemma:H=2k}
Let $k\le \ell$, $\m\in\bN^k$ and $\n\in \bN^{\ell}$ with $\isa{\m}=0$. If $|\n|=2k$, then $q(\km\vee\kn)=2$.
\end{lemma}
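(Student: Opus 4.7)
The strategy is to construct compatible $4\times k$ and $4\times \ell$ multiplicity matrices for $\km$ and $\kn$, and then invoke Corollary~\ref{cor:complete} (together with the fact that $\km\vee\kn$ has at least one edge, so $q\geq 2$).

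Two numerical facts drive the construction. First, $\isa{\m}=0$ forces $m_i\geq 2$ for every $i$, so in particular $|\m|\geq 2k$. Second, since every non-isolated component of $\kn$ contributes at least two vertices, $|\n|\geq \isa{\n}+2(\ell-\isa{\n})$, which combined with $|\n|=2k$ yields
\[ \isa{\n}\geq 2(\ell-k). \]
In particular $\ell\leq 2k$, and because $\ell\geq k$ we also obtain $\isa{\n}\geq \ell-k$.

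For $\km$, I will take each column of $V\in\bN_0^{4\times k}$ to be $V\e_i=(a_i,1,1,c_i)\trans$ with $a_i+c_i=m_i-2$; the two central ones supply the two nonzero entries required by Proposition~\ref{prop:mult-complete} since $m_i\geq 2$. For $\kn$, I will take $W\e_j=(0,w_{1j},w_{2j},0)\trans$ with $w_{1j}+w_{2j}=n_j$, additionally demanding $w_{1j},w_{2j}\geq 1$ whenever $n_j\geq 2$ (so these columns again carry two nonzero entries, as needed for $K_{n_j}$). The combinatorial heart is to arrange $\sum_j w_{1j}=k$, which automatically forces $\sum_j w_{2j}=k$. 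Without the sum constraint, each $w_{1j}$ ranges over $\{0,1\}$ when $n_j=1$ and over $\{1,\dots,n_j-1\}$ when $n_j\geq 2$; summing the endpoints gives
\[ \textstyle\sum_j w_{1j}\in[\ell-\isa{\n},\,2k+\isa{\n}-\ell]. \]
Since $\isa{\n}\geq \ell-k$, the target $k$ lies inside this interval, so a greedy assignment produces admissible $w_{1j}$'s.

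Once $V$ and $W$ are in hand, compatibility is immediate: $\widetilde V$ is the $2\times k$ all-ones matrix, so $\widetilde V\1_k=(k,k)\trans=\widetilde W\1_\ell$ by construction; and the $(i,j)$ entry of $\widetilde V\trans\widetilde W$ equals $w_{1j}+w_{2j}=n_j\geq 1$, hence $\widetilde V\trans\widetilde W>0$. Corollary~\ref{cor:complete} then yields $q(\km\vee\kn)=2$. The main obstacle is the counting step that places $k$ in the admissible interval for $\sum_j w_{1j}$; this is the single point at which the hypotheses $\isa{\m}=0$, $|\n|=2k$, and $k\leq\ell$ must all be used simultaneously.
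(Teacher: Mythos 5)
Your proof is correct and follows essentially the same route as the paper's: both construct compatible $4$-row multiplicity matrices in which $\widetilde V$ is the all-ones $2\times k$ matrix and the columns of $\widetilde W$ split each $n_j$ between the two middle rows so that both row sums equal $k$. The only difference is cosmetic: the paper establishes the existence of this splitting by invoking a theorem on nonnegative integer matrices with prescribed row and column sums, whereas you use a direct interval/greedy count showing $k$ lies in $[\ell-\isa{\n},\,2k+\isa{\n}-\ell]$; both are valid.
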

\begin{proof}
  By permuting the entries of $\n$ if necessary, we may assume that $\n=(n_1,\ldots,n_t,1^{\isa{\n}})$, where $t:=\ell-\isa{\n}$ and $n_i \geq 2$ for $i\in [t]$. Write $\isa{\n}=2a+b$ for $b \in \{0,1\}$. From $2k=|\n|=\iota(\n)+\sum_{i\in [t]} n_i$, we get
$k=a+\frac12(b+\sum_{i\in [t]}n_i)\ge a+\frac12b+t$, and hence $k\ge a+b+t$.
  Denote $\tuple{R}:=(r_1,r_2)=(k-t-a-b,k-t-a)$, $\tuple{C}:=(c_1,\ldots,c_t)=(n_1-2,\ldots,n_t-2)$, and note that $r_1+r_2=\sum_{j\in[t]}c_j$. Hence, by \cite[Theorem~2.1.2]{MR2266203}, there exists a matrix $Y=[y_{ij}]\in \bN_0^{2\times t}$ with row sums equal to $\tuple{R}$ and column sums equal to $\tuple C$.  Since $\isa{\m}=0$, we have $m_i\ge2$ for $i\in [k]$. Define compatible matrices $V \in \bN_0^{4\times k}$ and $W\in \bN_0^{4\times \ell}$ as follows: \[ V:=
  \npmatrix{
    m_1-2&\dots& m_k-2\\
    1 &\dots & 1\\
    1& \dots & 1\\
    0& \dots & 0
  },\quad
  W:=
  \npmatrix{
    0    & \dots & 0    & 0             & 0           \\
    y_{11}+1 & \dots & y_{1t}+1 & \1_{a+b}\trans & 0           \\
    y_{21}+1 & \dots & y_{2t}+1 & 0             & \1_{a}\trans \\
    0        & \dots & 0        & 0             & 0
  }.\]
%
By Corollary~\ref{prop:mult-complete}, $V$ and $W$ are multiplicity matrices for $\km$ and $\kn$, respectively, so $q(\km\vee\kn)=2$ by Corollary~\ref{cor:complete}.
%
\end{proof}

\begin{proposition}
  \label{prop:no iso G}
Let $k\le \ell$, $\m\in\bN^k$ and $\n\in \bN^{\ell}$ with $\isa{\m}=0$.
If $2k\leq \ell \leq |\m|$ or  $\ell \leq 2k\leq |\n|$, then $q(\km \vee \kn)=2$.
\end{proposition}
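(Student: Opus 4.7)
The plan is to handle the two disjuncts separately. By Corollary~\ref{cor:complete} and the generic realisability of complete graphs (Proposition~\ref{complete-generic}), in both cases it will suffice to produce compatible multiplicity matrices for $\km$ and $\kn$.

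For the disjunct $\ell\le 2k\le|\n|$, my approach is a reduction to Lemma~\ref{lemma:H=2k}. Applying Lemma~\ref{lem:major} with $s=\ell$, $t=2k$, $\tuple c=\n$ yields some $\n'\in\bN^\ell$ with $|\n'|=2k$ and $\n'\le\n$. Lemma~\ref{lemma:H=2k} then gives $q(\km\vee\kx{\n'})=2$, and Corollary~\ref{cor:complete-jdup} promotes this to $q(\km\vee\kn)=2$.

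For the disjunct $2k\le\ell\le|\m|$, my plan is to exhibit explicit compatible $4$-row multiplicity matrices. Since $\isa{\m}=0$, every $m_i\ge 2$. Applying Lemma~\ref{lem:major} with $s=k$, $t=\ell-k$, $\tuple c=(m_1-1,\dots,m_k-1)\in\bN^k$ (noting that the required hypotheses $k\le \ell-k\le|\m|-k$ are precisely what is being assumed) yields $b_1,\dots,b_k\in\bN$ with $b_i\le m_i-1$ and $\sum_ib_i=\ell-k$. I would then set
\[V:=\npmatrix{m_1-1-b_1&\cdots&m_k-1-b_k\\1&\cdots&1\\b_1&\cdots&b_k\\0&\cdots&0}\in \bN_0^{4\times k},\]
and, partitioning $[\ell]$ into subsets $J_A$ of size $k$ and $J_B$ of size $\ell-k$, take $W\in\bN_0^{4\times\ell}$ to have $j$th column $(n_j-1,1,0,0)\trans$ for $j\in J_A$ and $(n_j-1,0,1,0)\trans$ for $j\in J_B$.

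The verification breaks into routine checks: that $V$ and $W$ are genuine multiplicity matrices (column sums equal to $m_i$ and $n_j$ respectively, with at least two nonzero entries whenever the corresponding component has $\ge 2$ vertices --- for $V$ this comes from rows 2 and 3, for $W$ from row 1 paired with row 2 or row 3), that $\widetilde V\1_k=(k,\ell-k)\trans=\widetilde W\1_\ell$, and that $(\widetilde V\trans\widetilde W)_{ij}=W_{2,j}+b_iW_{3,j}>0$ (either equal to $1$ or to $b_i\ge 1$, according to whether $j\in J_A$ or $j\in J_B$). No single step is hard; the main observation is that the numerical inequalities needed to apply Lemma~\ref{lem:major} to $(m_1-1,\dots,m_k-1)$ match the case hypothesis $2k\le\ell\le|\m|$ exactly.
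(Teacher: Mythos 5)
Your proof is correct and follows essentially the same route as the paper: the second disjunct is handled by the identical reduction via Lemma~\ref{lem:major}, Lemma~\ref{lemma:H=2k} and Corollary~\ref{cor:complete-jdup}, and for the first disjunct your matrices $V$ and $W$ coincide (up to renaming $b_i$ as $r_i$) with the paper's explicit $4$-row construction. All the verification steps you outline check out.
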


\begin{proof}
  Suppose first that $2k\leq \ell \leq |\m|$. Then $k\le \ell-k\le |\m|-k$, and since $\isa{\m}=0$, we have $m_i\ge 2$ for every $i\in [k]$. By Lemma~\ref{lem:major}, there exist $r_i\in [m_i-1]$ so that  $\ell-k =\sum_{i\in [k]} r_i$. 
  The matrices 
\[{\small\npmatrix{m_1-r_1-1&\dots&m_k-r_k-1\\1&\dots&1\\r_1&\dots&r_k\\0&\dots&0}}\text{\,and\,}
{\small\npmatrix{%
      n_1-1&\dots&n_k-1&n_{k+1}-1&\dots&n_\ell-1\\
      1&\dots&1&0&\dots&0\\
      0&\dots&0&1&\dots&1\\
      0&\dots&0&0&\dots&0}}\]
are compatible multiplicity matrices for $\km$ and $\kn$, respectively, hence $q(\km \vee \kn)=2$ by Corollary \ref{cor:complete}.

Suppose instead that $\ell \leq 2k\leq |\n|$. By Lemma~\ref{lem:major}, there is a vector $\p\in \bN^\ell$ with $\p\le \n$ and $|\p|=2k$.
By Lemma \ref{lemma:H=2k} we have $q(\km\vee\kx{\p})=2$
and it follows by Corollary~\ref{cor:complete-jdup} that $q(\km\vee\kn)=2$.
\end{proof}

Assuming $\isa{\m}=\isa{\n}=0$,  we get the first complete resolution of $q(\km \vee \kn)$.

\begin{proposition}
  \label{prop:complete no iso}
 Let $\m\in\bN^k$, $\n\in \bN^{\ell}$. If  $\isa{\m}=\isa{\n}=0$, then
 $$q(\km \vee \kn)=\begin{cases}
   2&\text{if $\max\{k,\ell \} \leq \min\{|\m|,|\n|\}$},\\
   3 & \text{otherwise.}
   \end{cases}$$
\end{proposition}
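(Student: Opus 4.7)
The plan is to assemble this from the preceding results, using the no-isolated-vertex hypothesis to simplify the condition $\max\{k,\ell\}\le\min\{|\m|,|\n|\}$.

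First, since $\isa{\m}=\isa{\n}=0$ forces $m_i\ge 2$ for all $i$ and $n_j\ge 2$ for all $j$, we automatically have $|\m|\ge 2k$ and $|\n|\ge 2\ell$. The statement is symmetric under swapping $(\m,k)\leftrightarrow(\n,\ell)$, so I would assume without loss of generality that $k\le\ell$, making $\max\{k,\ell\}=\ell$. Combined with $\ell\le|\n|/2\le|\n|$, the hypothesis $\max\{k,\ell\}\le\min\{|\m|,|\n|\}$ then becomes equivalent to the single inequality $\ell\le|\m|$.

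For the direction $\ell>|\m|$: this is immediate from Proposition~\ref{prop:comp vs vertices}, which directly gives $q(\km\vee\kn)=3$.

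For the direction $\ell\le|\m|$: I would split on the size of $\ell$ relative to $2k$. If $2k\le\ell$, then $2k\le\ell\le|\m|$ and the first clause of Proposition~\ref{prop:no iso G} applies to give $q(\km\vee\kn)=2$. Otherwise $\ell<2k$, so $\ell\le 2k$; and since $k\le\ell$ yields $2k\le 2\ell\le|\n|$, the second clause of Proposition~\ref{prop:no iso G} applies, again giving $q(\km\vee\kn)=2$.

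There is no real obstacle here: everything follows by bookkeeping from Propositions~\ref{prop:comp vs vertices} and~\ref{prop:no iso G} together with Corollary~\ref{cor:complete} (which ensures $q\in\{2,3\}$ to begin with). The only point requiring minor care is checking that the two clauses of Proposition~\ref{prop:no iso G} together cover the entire range $k\le\ell\le|\m|$, which is handled by the dichotomy $\ell\ge 2k$ versus $\ell<2k$ and the automatic bound $|\n|\ge 2\ell$ coming from $\isa{\n}=0$.
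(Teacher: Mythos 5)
Your proof is correct. You follow the paper's own reduction (assume $k\le\ell$ by symmetry; since $\isa{\n}=0$ gives $|\n|\ge 2\ell$, the condition $\max\{k,\ell\}\le\min\{|\m|,|\n|\}$ collapses to $\ell\le|\m|$), you handle the $q=3$ direction with Proposition~\ref{prop:comp vs vertices} exactly as the paper does, and you cover $2k\le\ell\le|\m|$ with the first clause of Proposition~\ref{prop:no iso G}, again as in the paper. The divergence is in the remaining case $k\le\ell<2k$: you note that $\ell\le 2k\le 2\ell\le|\n|$, so the \emph{second} clause of Proposition~\ref{prop:no iso G} already yields $q=2$, whereas the paper instead writes down explicit compatible $4\times k$ and $4\times\ell$ multiplicity matrices and invokes Corollary~\ref{cor:complete}. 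Your route is shorter and fully rigorous (the second clause traces back through Lemma~\ref{lemma:H=2k} and Corollary~\ref{cor:complete-jdup}, both of which apply here). What the paper's explicit construction buys is not needed for this proposition itself, but it is reused later: the concrete four-row matrices with middle-row sums equal to $2k$ feed into the remark that compatible multiplicity matrices with at most four rows always suffice, and into the upper bound $\mu(\m,\n)\le\max\{\ell,2k\}$ in~\eqref{eq:mu bound}. If one adopts your proof, those later claims would instead have to be justified by unwinding the constructions inside Lemma~\ref{lemma:H=2k}.
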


\begin{proof}
  By symmetry we may assume without loss of generality that $k \leq \ell$. Under this assumption, we must prove that $q(\km\vee\kn)=2$ if and only if $\ell \le |\m|$.
  If $|\m| < \ell$, then $q(\km \vee \kn)=3$ by Proposition \ref{prop:comp vs vertices}, and
  if $2k \leq \ell \leq |\m|$, then $q(\km \vee \kn)=2$ by Proposition \ref{prop:no iso G}.

  The only remaining case is when $\ell \leq |\m|$ and
  $k \leq \ell < 2k$. 
  Then $1\le 2k-\ell\leq k\leq \ell$ and  $\isa{\m}=\isa{\n}=0$ implies that $m_i,n_j\ge2$ for all $i\in [k]$ and $j\in [\ell]$. We define matrices $V\in \bN_0^{4\times k}$ and $W\in \bN_0^{4\times \ell}$ as follows:
  \[ V:=\npmatrix{m_1-2&\dots &m_k-2\\1&\dots&1\\1&\dots&1\\0&\dots&0}\]
  and
  \[ W:=\small\npmatrix{
      n_1-2&\dots&n_{2k-\ell}-2&n_{2k-\ell+1}-1&\dots&n_{k}-1&n_{k+1}-1&\dots&n_{\ell}-1\\
      1&\dots& 1&1 &\dots &1 &0 &\dots&0 \\
      1&\dots& 1&0 &\dots &0 &1 &\dots& 1\\
      0&\dots&0 &0 &\dots &0 &0 &\dots& 0
      }.
    \]
Since $V,W$ are compatible multiplicity matrices for $\km$ and $\kn$, respectively, we conclude that $q(\km\vee\kn)=2$ by Corollary~\ref{cor:complete}.
\end{proof}

\subsection{Unions of complete graphs with isolated vertices}\label{subsec:iso}


When both $\km$ and $\kn$ have isolated vertices, we have the following characterisation.

\begin{proposition}\label{prop:q2-iso}
   Let $\m\in\bN^k$, $\n\in \bN^{\ell}$. If $\isa{\m}>0$ and $\isa{\n}>0$, then the following conditions are equivalent:
  \begin{enumerate}
  \item \label{b1} $q(\km\vee \kn)=2$;
  \item \label{b2} $\km$ and $\kn$ have a compatible pair of multiplicity matrices;
  \item \label{b3} $\km$ and $\kn$ have a compatible pair of multiplicity matrices in $\bN_0^{3\times k}$ and $\bN_0^{3\times \ell}$, respectively;
  \item \label{b4} $k+\ell\le \min\{|\m|+\isa{\m},|\n|+\isa{\n}\}$.
  \end{enumerate}
\end{proposition}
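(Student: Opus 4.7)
The plan is to close the equivalences via (b1) $\Leftrightarrow$ (b2), (b3) $\Leftrightarrow$ (b4), (b3) $\Rightarrow$ (b2), and the sole nontrivial implication (b2) $\Rightarrow$ (b4). The first pair is immediate from Corollary~\ref{cor:complete}, the second from Proposition~\ref{prop:q2-compatible matrices}, and (b3) $\Rightarrow$ (b2) is trivial. Both hypotheses $\isa{\m}>0$ and $\isa{\n}>0$ will enter only at the final step.

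For (b2) $\Rightarrow$ (b4), suppose $V\in \bN_0^{r\times k}$ and $W\in \bN_0^{r\times\ell}$ are compatible multiplicity matrices for $\km$ and $\kn$. For each $i\in \iso(\km)$ the column $V\e_i$ has entries summing to $1$, and $\widetilde V\e_i\ne 0$ (since $\widetilde V\trans\widetilde W>0$), so $V\e_i=\e_{t_i}$ for a unique $t_i\in\{2,\ldots,r-1\}$. Let $T:=\{t_i:i\in\iso(\km)\}\subseteq\{2,\ldots,r-1\}$, and define $T'$ analogously from $W$ and $\iso(\kn)$. I will show that $T=T'$ is a singleton. Fix $t\in T$ and choose $i\in\iso(\km)$ with $V\e_i=\e_t$; then $w_{tj}=(\widetilde V\e_i)\trans(\widetilde W\e_j)>0$ for every $j\in[\ell]$. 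For $j\in\iso(\kn)$ we have $W\e_j=\e_{t'_j}$, so $w_{tj}$ is $1$ when $t=t'_j$ and $0$ otherwise; positivity forces $t'_j=t$ for every $j\in\iso(\kn)$. Hence $T'=\{t\}$, so $|T'|=1$, and the symmetric argument yields $|T|=1$ and $T=T'$.

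Write $T=T'=\{t_0\}$. Interchanging the roles above gives $v_{t_0,i}>0$ for every $i\in[k]$ and $w_{t_0,j}>0$ for every $j\in[\ell]$. For any $i$ with $m_i\ge 2$, Proposition~\ref{prop:mult-complete} forces $V\e_i$ to have at least two nonzero entries summing to $m_i$, so $v_{t_0,i}\le m_i-1$. Summing row $t_0$ of $V$ therefore yields
\[
p_{t_0}=\isa{\m}+\sum_{i:\,m_i\ge 2}v_{t_0,i}\le \isa{\m}+\sum_{i:\,m_i\ge 2}(m_i-1)=|\m|+\isa{\m}-k,
\]
while $p_{t_0}=\sum_j w_{t_0,j}\ge\ell$ since $w_{t_0,j}>0$ for all $j$. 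Hence $k+\ell\le|\m|+\isa{\m}$, and the symmetric argument gives $k+\ell\le|\n|+\isa{\n}$, completing (b4). The main obstacle is the coupled reasoning that forces $|T|=|T'|=1$ and $T=T'$; once this is established, the inequality (b4) drops out of simple row-sum bookkeeping on row $t_0$.
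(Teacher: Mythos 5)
Your proof is correct and follows essentially the same route as the paper: the crux in both is that the isolated vertices on each side, together with $\widetilde V\trans\widetilde W>0$, force a single common index $t_0$ at which both $\widetilde V$ and $\widetilde W$ are nowhere-zero. The only organisational difference is that you close the cycle via (b2)$\Rightarrow$(b4) by inlining the row-sum bound $\ell\le p_{t_0}\le|\m|+\isa{\m}-k$ (the same computation as in the proof of Proposition~\ref{prop:q2-compatible matrices}), whereas the paper instead proves (b2)$\Rightarrow$(b3) by compressing $V,W$ to three-row matrices supported on row $t_0$ and then cites that proposition.
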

\begin{proof}
  \ref{b1} and \ref{b2} are equivalent by Corollary~\ref{cor:complete}. \ref{b3} and \ref{b4} are equivalent by Proposition \ref{prop:q2-compatible matrices} and plainly imply \ref{b1} and \ref{b2}.

  Suppose now \ref{b2} holds, and $V \in \R^{r \times k}$ and $W \in \R^{r \times \ell}$ are compatible multiplicity matrices for $\km$ and $\kn$, respectively, for some $r\ge3$.   Since $\isa{\m}>0$ and $\isa{\n}>0$, the condition $\widetilde V\trans\widetilde W>0$ implies that for some $i\in [r-2]$, $i$th rows of both  $\widetilde V$ and $\widetilde W$ are nowhere-zero. Define $V'\in \bN_0^{3\times k}$ and $W'\in \bN_0^{3\times \ell}$ as follows:
\[ V':=\npmatrix{\1_r\trans V - \e_i\trans\widetilde V\\\e_i\trans\widetilde V\vphantom{\displaystyle\int}\\\mathbf{0}_k\trans},\quad
  W':=\npmatrix{\1_r\trans W - \e_i\trans\widetilde W\\\e_i\trans\widetilde W\vphantom{\displaystyle\int}\\\mathbf{0}_{\ell}\trans}.\]
Note that $V'$ has the same column sums as $V$, and if the $j$th column of $V$ has more than one non-zero entry, then the $j$th column of $V'$ has (precisely) two non-zero entries, so by Proposition~\ref{prop:mult-complete}, $\widetilde{V'}$ is a multiplicity matrix for $\km$; similarly, $W'$ is a multiplicity matrix for $\kn$. Since $\widetilde {V'}=\e_i\trans\widetilde V$ and $\widetilde{W'}=\e_i\trans\widetilde W$, it follows that $V'$ and $W'$ are compatible multiplicity matrices for $\km$ and $\kn$, so \ref{b2} implies \ref{b3}.
\end{proof}

Suppose now  $k \leq \ell$ and $\isa{\n}=0$.  Under this assumption we have $ |\n|\geq 2 \ell \geq k+\ell$. Hence the sufficient condition \ref{a1} in Proposition \ref{prop:q2-compatible matrices} for $q(\km \vee \kn)=2$  is equivalent to $k+\ell\leq |\m|+\isa{\m}$. We will show that this condition is also necessary for $q(\km \vee \kn)=2$ in the case $\isa{\m}>0$.

\begin{lemma}\label{lem:rowbound}
  If $r\ge3$ and $V\in\bN_0^{r\times k}$ is a multiplicity matrix for a graph~$G$ with $k$ connected components and $\m=\1_r\trans V$ is the vector of connected component sizes of $G$, then for any $s\in [r-2]$, we have $\e_s\trans \widetilde V\1_k\le \isa{\m}+|\m|-k$.
\end{lemma}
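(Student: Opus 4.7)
The plan is to bound each entry of the row $\e_s\trans\widetilde V$ individually, then sum. Let $G=G_1\cup\dots\cup G_k$ be the decomposition into connected components so that the $i$th column of $V$ is a multiplicity vector for $G_i$, with $|G_i|=m_i$. For $s\in[r-2]$ and $i\in[k]$, set $b_i:=(\e_s\trans\widetilde V)_i$, which is simply the $(s+1,i)$ entry of $V$, i.e.~the multiplicity of some eigenvalue $\lambda_{s+1}\in(\lambda_1,\lambda_r)$ of a realising matrix $A_i\in S(G_i)$.

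The first observation is that since column $i$ of $V$ sums to $m_i$, we trivially have $b_i\le m_i$. The crucial sharpening is: if $m_i\ge 2$, then $b_i\le m_i-1$. Indeed, if $m_i\ge 2$ then $G_i$ is connected of order $\ge 2$, so it has at least one edge and any $A_i\in S(G_i)$ must have a nonzero off-diagonal entry; in particular $A_i$ is not a scalar multiple of the identity and therefore possesses at least two distinct eigenvalues. Hence column $i$ of $V$ has at least two nonzero entries, forcing $b_i\le m_i-1$.

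The final step is simply to sum the estimates, splitting according to whether $m_i=1$ (there are $\isa{\m}$ such indices) or $m_i\ge 2$ (there are $k-\isa{\m}$ such indices contributing total size $|\m|-\isa{\m}$):
\[
\e_s\trans\widetilde V\1_k=\sum_{i\in[k]}b_i\le \isa{\m}\cdot 1+\sum_{i:\,m_i\ge 2}(m_i-1)=\isa{\m}+(|\m|-\isa{\m})-(k-\isa{\m})=|\m|+\isa{\m}-k.
\]

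There is no real obstacle here: the only non-trivial input is the structural remark that a connected graph of order at least two cannot be realised by a scalar matrix, which is immediate from the definition of $S(G_i)$. The lemma is essentially a bookkeeping exercise that records the cost in terms of column sums of $V$ of having each non-isolated component contribute to at least two distinct rows.
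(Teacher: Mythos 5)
Your proof is correct and follows essentially the same route as the paper: split the sum over columns according to whether $m_i=1$ or $m_i\ge 2$, bound the former by $\isa{\m}$ and the latter by $\sum_{i:m_i\ge2}(m_i-1)$. The paper's version is terser and leaves implicit the justification that a non-isolated component forces at least two nonzero entries in its column of $V$, which you correctly spell out via the observation that a connected graph with an edge cannot be realised by a scalar matrix.
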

\begin{proof}
  We have
  \[  \e_s\trans\widetilde V\1_k
    =\sum_{i:m_i=1}\e_s\trans \widetilde V\e_i+\sum_{i:m_i>1}\e_s\trans \widetilde V\e_i
    \leq \isa{\m}+\sum_{i:m_i>1}(m_i-1)=\isa{\m}+|\m|-k.\qedhere\]
\end{proof}

\begin{proposition}
  \label{prop:G yes H no}
 Let $k\le\ell$, $\m\in\bN^k$ and $\n\in \bN^{\ell}$. If $\isa{\m}>0$ and $\isa{\n}=0$, then
 $$q(\km \vee \kn)=\begin{cases}
   2 & \text{if $k+\ell\leq |\m| +\isa{\m}$},\\
   3 & \text{otherwise.}
   \end{cases}
   $$
\end{proposition}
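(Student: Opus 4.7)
The plan is to handle the two implications separately, using Corollary~\ref{cor:complete} (which gives both $q(\km\vee\kn)\in\{2,3\}$ and the characterisation $q=2$ iff compatible multiplicity matrices exist) as the entry point.

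For the sufficient direction, i.e.\ $k+\ell\le |\m|+\isa{\m}\Rightarrow q=2$, I will simply invoke Proposition~\ref{prop:q2-compatible matrices}. Because $\isa{\n}=0$ and $k\le \ell$, we automatically have $|\n|+\isa{\n}=|\n|\ge 2\ell\ge k+\ell$; hence the hypothesis $k+\ell\le \min\{|\m|+\isa{\m},\,|\n|+\isa{\n}\}$ of Proposition~\ref{prop:q2-compatible matrices} reduces to the assumed inequality, and the proposition delivers $q=2$.

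For the necessary direction, I will argue contrapositively: assuming $q=2$, extract compatible multiplicity matrices $V\in\bN_0^{r\times k}$ and $W\in\bN_0^{r\times \ell}$ via Corollary~\ref{cor:complete}, and then deduce $k+\ell\le |\m|+\isa{\m}$. Let $I=\{i:m_i=1\}$ index the isolated components. For each $i\in I$ the column $V\e_i$ sums to~$1$, so $V\e_i=\e_{s_i}$ for some $s_i$, and compatibility $\widetilde V\trans\widetilde W>0$ forces both $s_i\in\{2,\dots,r-1\}$ (otherwise $\widetilde V\e_i=0$) and row $s_i-1$ of $\widetilde W$ to be nowhere-zero. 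Setting $\mathcal{T}:=\{s_i-1:i\in I\}\subseteq [r-2]$, so that $1\le |\mathcal{T}|\le \isa{\m}$, I sum the identity $\widetilde V\1_k=\widetilde W\1_\ell$ over $s\in\mathcal{T}$; each such row contributes at least $\ell$ on the $\widetilde W$-side, while on the $V$-side the isolated columns contribute exactly $\isa{\m}$ and each non-isolated column contributes at most $m_i$. This yields $|\mathcal{T}|\ell\le |\m|$.

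I then split on $|\mathcal{T}|$. If $|\mathcal{T}|\ge 2$, then $\ell\le |\m|/2$, and combining this with the elementary bound $|\m|\ge 2k-\isa{\m}$ (because $m_i\ge 2$ whenever $i\notin I$) gives $\ell\le |\m|+\isa{\m}-k$. If instead $\mathcal{T}=\{s^*-1\}$ is a singleton, I need a sharper estimate: since $V\e_i$ has at least two nonzero entries for each $i\notin I$, we must have $V_{s^*,i}\le m_i-1$, so
\[\ell\le \e_{s^*-1}\trans \widetilde V\1_k=\isa{\m}+\sum_{i\notin I}V_{s^*,i}\le \isa{\m}+\sum_{i\notin I}(m_i-1)=|\m|+\isa{\m}-k.\]
Either way $k+\ell\le |\m|+\isa{\m}$, as required. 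The main obstacle is the singleton case $|\mathcal{T}|=1$: the cruder bound $\sum_{s\in\mathcal{T}}V_{si}\le m_i$ is too weak by one per non-isolated component, and recovering the missing $k-\isa{\m}$ via the two-nonzero-entries constraint on non-isolated multiplicity columns is the delicate combinatorial step.
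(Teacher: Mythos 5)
Your proof is correct and the necessity argument is essentially the paper's: the paper picks a single isolated column of $V$, notes that the corresponding row of $\widetilde W$ is nowhere-zero so that row of $\widetilde V$ sums to at least $\ell$, and bounds that row sum by $\isa{\m}+|\m|-k$ (its Lemma~\ref{lem:rowbound}), which is exactly your singleton-case estimate. The case split on $|\mathcal{T}|$ is unnecessary, because that ``sharper'' estimate holds for \emph{every} row $s$ (each isolated column contributes at most $1$ and each non-isolated column at most $m_i-1$ to any single row sum), so the step you flagged as delicate is in fact the whole argument and the $|\mathcal{T}|\ge 2$ branch never needs separate treatment.
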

\begin{proof}
 Since $k \leq \ell$ and $\isa{\n}=0$, we have $|\n|\geq 2\ell \geq k+\ell$ and thus the sufficiency of the condition  $k+\ell\leq |\m| +\isa{\m}$ follows by Proposition \ref{prop:q2-compatible matrices}.

 Conversely, if $q(\km\vee\kn)=2$, then by Corollary \ref{cor:complete} there exist compatible multiplicity matrices $V\in \bN_0^{r \times k}$ and $W\in \bN_0^{r \times \ell}$ for $\km$ and $\kn$, respectively. Since $\isa{\m}>0$, some column of $\widetilde V$ is equal to $\e_s$, where $s\in [r-2]$. Since $\widetilde V\trans \widetilde W>0$, this implies that $\e_s\trans\widetilde W>0$, so $\e_s\trans \widetilde W \ge \1_{\ell}\trans$. By Lemma~\ref{lem:rowbound},
  \[ \ell = \1_\ell\trans \1_\ell\le \e_s\trans\widetilde W\1_\ell=\e_s\trans\widetilde V\1_k\le \isa{\m}+|\m|-k,\]
  so $k+\ell\le \isa{\m}+|\m|$.
\end{proof}

\begin{proposition}
  \label{prop:G no H yes}
 Let $k\le\ell$, $\m\in\bN^k$ and $\n\in \bN^{\ell}$. If $\isa{\m}=0$ and $\isa{\n}>0$,
then
$$q(\km \vee \kn)=\begin{cases}
2& \text{if $2k \leq \ell \leq |\m|$, or $\ell \leq 2k \leq |\n|$, or $k+\ell\leq |\m|$},\\
3 & \text{otherwise.}
\end{cases}$$
\end{proposition}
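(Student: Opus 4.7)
The plan is to verify sufficiency of each of the three conditions using prior results, and then prove necessity by a structural analysis of any hypothetical compatible pair of multiplicity matrices, with the constraint $\widetilde V\trans\widetilde W>0$ as the decisive lever.

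For sufficiency, both $2k\le\ell\le|\m|$ and $\ell\le 2k\le|\n|$ fall directly under Proposition~\ref{prop:no iso G} (which uses $\isa{\m}=0$). For $k+\ell\le|\m|$, I will apply Proposition~\ref{prop:q2-compatible matrices}\ref{a1}, noting that $\isa{\n}>0$ and $k\le\ell$ together give $|\n|+\isa{\n}\ge 2\ell\ge k+\ell$. For necessity, assume no condition holds. If $\ell>|\m|$, then Proposition~\ref{prop:comp vs vertices} gives $q=3$ directly; otherwise $\ell\le|\m|$, and combining the three negations with the bound $|\m|\ge 2k$ (from $\isa{\m}=0$) forces $k<\ell<2k$, $|\n|<2k$, and $|\m|<k+\ell$.

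Now suppose for contradiction that there exists a compatible pair $V\in\bN_0^{r\times k}$ for $\km$ and $W\in\bN_0^{r\times\ell}$ for $\kn$, and set $\p:=\widetilde V\1_k=\widetilde W\1_\ell$. The key observation is that each isolated vertex of $\kn$ corresponds to a column of $W$ of the form $\e_s$ for some $s\in\{2,\ldots,r-1\}$ (otherwise that column of $\widetilde W$ would be zero, violating $\widetilde V\trans\widetilde W>0$). Let $T$ be the set of such indices~$s$; then $T\ne\emptyset$ as $\isa{\n}>0$. For each $s\in T$, row $s$ of $V$ must be entrywise positive, so every column of $\widetilde V$ has middle-sum at least $|T|$, and hence $|\p|\ge k|T|$. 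On the other hand, summing along columns of $\widetilde W$ gives $|\p|\le|\n|<2k$, so $|T|=1$; write $T=\{s\}$.

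It remains to split on whether the $s$-th row of $W$ is entrywise positive. If yes, then $p_s\ge\ell$, while each column of $V$ has at least two nonzero entries (since $m_i\ge 2$), giving $v_{s,i}\le m_i-1$ and $p_s\le|\m|-k$; combining yields $k+\ell\le|\m|$, contradicting $|\m|<k+\ell$. If instead some necessarily non-isolated column $j_0$ of $W$ has $w_{s,j_0}=0$, then compatibility at $(i,j_0)$ forces, for each $i\in[k]$, a second nonzero entry of column $i$ of $V$ in some middle row distinct from $s$, so every column of $\widetilde V$ has middle-sum $\ge 2$ and $|\p|\ge 2k>|\n|$, again a contradiction. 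Corollary~\ref{cor:complete} then delivers $q(\km\vee\kn)=3$. I expect the main obstacle to be setting up the correct case analysis; once the pigeonhole-type bound $|\p|\le|\n|<2k$ is in place, it handles both the reduction to $|T|=1$ and the second sub-case cleanly.
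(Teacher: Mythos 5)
Your proposal is correct and follows essentially the same route as the paper: sufficiency via Propositions~\ref{prop:q2-compatible matrices} and~\ref{prop:no iso G}, the $\ell>|\m|$ case via Proposition~\ref{prop:comp vs vertices}, and necessity by observing that an isolated-vertex column of $W$ forces an entrywise-positive row of $\widetilde V$, then playing the bound $\e_s\trans\widetilde V\1_k\le|\m|-k$ (Lemma~\ref{lem:rowbound}) against the count $2k\le\1_{r-2}\trans\widetilde V\1_k\le|\n|$. Your explicit two-case split (row $s$ of $W$ positive or not) and the reduction to $|T|=1$ are just a reorganisation of the paper's sequential derivation of the same contradictions.
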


\begin{proof}
 If $2k\leq \ell \leq |\m|$ or $\ell \leq 2k \leq |\n|$ or $k+\ell\leq |\m|$, then $q(\km \vee \kn)=2$ by Propositions~\ref{prop:q2-compatible matrices} and~\ref{prop:no iso G}.
If $\ell>|\m|$, then $q(\km \vee \kn)=3$ by Proposition \ref{prop:comp vs vertices}.

 If none of the above relations holds, then in particular, we have $|\n|<2k$ and $|\m| <k+\ell$.
  If $q(\km \vee \kn)=2$, then by Corollary \ref{cor:complete} there exist $r\ge3$ and compatible multiplicity matrices $V\in \bN_0^{r \times k}$ and $W\in \bN_0^{r \times \ell}$ for $\km$ and $\kn$, respectively. Since $\isa{\n}>0$, we can assume without loss of generality that $n_\ell=1$ and that $\widetilde W\e_\ell=\e_1\in \bN_0^{r-2}$. The condition $\widetilde V\trans \widetilde W >0$ now implies that $\e_1\trans\widetilde V \geq \1_k\trans$.
  Lemma~\ref{lem:rowbound} together with $\isa{\m}=0$ gives
   $\e_1\trans \widetilde W \1_\ell=\e_1\trans \widetilde V \1_k \leq |\m|-k<\ell$.
  If $\e_1\trans\widetilde W\ge \1_\ell\trans$, then we obtain an immediate contradiction. So the first row of $\widetilde W$ contains at least one zero entry. Hence, each of the $k$ columns of $\widetilde V$ necessarily has at least one nonzero entry in a row other than row~$1$, in addition to the nonzero entry in row~$1$, to satisfy $\widetilde V\trans \widetilde W>0$.
    %
So $2k\le \1_{r-2}\trans \widetilde V \1_k=\1_{r-2}\trans \widetilde W\1_\ell\le \1_rW\1_\ell=|\n|$, a contradiction. This proves $q(\km\vee\kn)=3$.
\end{proof}

We remark that by the constructions in Subsections~\ref{subsec:no iso} and~\ref{subsec:iso}, whenever $q(\km\vee\kn)=2$, there exist compatible multiplicity matrices for $\km$ and $\kn$ with at most four rows.

\subsection{Complete result}

We summarise the different cases that we have considered in a theorem that completely resolves $q(\km\vee \kn)$, and follows immediately from Propositions~\ref{prop:complete no iso}, \ref{prop:q2-iso}, \ref{prop:G yes H no} and \ref{prop:G no H yes}.

\begin{theorem}\label{thm:general}
Let $k\le\ell$, $\m\in\bN^k$ and $\n\in \bN^{\ell}$. We have $q(\km\vee \kn)=2$ if and only if one of the following is true:
\begin{enumerate}
\item\label{g1} $\isa{\m}=0$, $\isa{\n}= 0$ and  $\ell \le |\m|$,
 \item\label{g2} $\isa{\m}>0$ and $k+\ell\le |\m|+\isa{\m}$, or
\item\label{g3} $\isa{\m}=0$, $\isa{\n}>0$ and either $k+\ell\leq |\m|$, or $2k \leq \ell \leq |\m|$, or $\ell \leq 2k \leq |\n|$.
\end{enumerate}
Otherwise, $q(\km\vee \kn)=3$.
\end{theorem}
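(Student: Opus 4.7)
My plan is to deduce Theorem~\ref{thm:general} by a complete case analysis on the two binary indicators $\isa{\m}=0$ versus $\isa{\m}>0$ and $\isa{\n}=0$ versus $\isa{\n}>0$, matching each of the four resulting cases to one of the previously established propositions. The hypothesis $k\le\ell$ will be used throughout without further comment.

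First I would dispose of the two cases in which $\isa{\m}=0$. If additionally $\isa{\n}=0$, Proposition~\ref{prop:complete no iso} characterises $q(\km\vee\kn)=2$ as $\ell\le\min\{|\m|,|\n|\}$; since $\isa{\n}=0$ forces every component of $\kn$ to have at least two vertices and hence $|\n|\ge 2\ell\ge\ell$, the right-hand bound is automatic and the characterisation collapses to $\ell\le|\m|$, which is precisely~(g1). If instead $\isa{\m}=0$ and $\isa{\n}>0$, Proposition~\ref{prop:G no H yes} directly yields the three alternatives constituting~(g3).

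Next I would address the two cases in which $\isa{\m}>0$, both of which should produce condition~(g2). When $\isa{\m}>0$ and $\isa{\n}=0$, Proposition~\ref{prop:G yes H no} gives the desired characterisation verbatim. When $\isa{\m}>0$ and $\isa{\n}>0$, I would apply Proposition~\ref{prop:q2-iso}, which says that $q(\km\vee\kn)=2$ iff $k+\ell\le\min\{|\m|+\isa{\m},|\n|+\isa{\n}\}$. The key little observation I plan to record is that the second bound here is automatic under our standing assumptions: since $\isa{\n}>0$ accounts for $\isa{\n}$ isolated components and the remaining $\ell-\isa{\n}$ components of $\kn$ each have at least two vertices, we have $|\n|\ge \isa{\n}+2(\ell-\isa{\n})=2\ell-\isa{\n}$, hence $|\n|+\isa{\n}\ge 2\ell\ge k+\ell$. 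Thus the minimum equals $|\m|+\isa{\m}$, and the condition simplifies to~(g2).

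There is no real technical difficulty to overcome since every case reduces to a previously proven proposition; the only step needing even a moment's thought is the elementary inequality $|\n|+\isa{\n}\ge 2\ell$, which is what allows the two-sided bound of Proposition~\ref{prop:q2-iso} to collapse to the single inequality appearing in~(g2). Combining the four cases covers all possibilities for $(\isa{\m},\isa{\n})$, so the stated equivalence holds, and the ``otherwise'' clause $q(\km\vee\kn)=3$ follows from Corollary~\ref{cor:complete}, which has already ruled out every value besides $2$ and~$3$.
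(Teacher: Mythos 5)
Your proposal is correct and follows essentially the same route as the paper, which derives Theorem~\ref{thm:general} directly from Propositions~\ref{prop:complete no iso}, \ref{prop:q2-iso}, \ref{prop:G yes H no} and \ref{prop:G no H yes} via the same four-way case split on $\isa{\m}$ and $\isa{\n}$. The small simplifications you record (that $|\n|\ge 2\ell$ when $\isa{\n}=0$, and $|\n|+\isa{\n}\ge 2\ell\ge k+\ell$ in general) are exactly what is needed to collapse the two-sided bounds to the conditions stated in the theorem.
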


Theorem~\ref{thm:main} is a straightforward consequence of Theorem \ref{thm:general}
and Corollary~\ref{cor:K-to-connected-q=3}.
Note that Theorem \ref{thm:general} allows us to observe properties of $q(\km \vee \kn)$. For example,
for any two fixed numbers of connected components $k$ and $\ell$, we can always choose $\m \in \bN^k$ and $\n \in \bN^\ell$, with $|\m|$ and $|\n|$ sufficiently large to achieve $q(\km \vee \kn)=2$.

\subsection{Multiplicities}

Let $X$ be an $n \times n$ orthogonal symmetric matrix, and let $i_+(X)$ and $i_-(X)$ denote the multiplicities of $1$ and $-1$ as eigenvalues of $X$, respectively. 
The study of $i_+(X)$ can be motivated by the fact that the map $X\mapsto P(X)=\frac12(I+X)$ is a bijection between the $n\times n$ orthogonal symmetric matrices and the orthogonal projections onto subspaces of $\bR^n$, and $\rank P(X)=i_+(X)$. Moreover, for any graph $G$, we have $X\in S(G)$ if and only if $P(X)\in S(G)$.

Let $G$ and $H$ be graphs such that $q(G \vee H)=2$. Let $X\in S(G\vee H)$ be an orthogonal matrix with the corresponding compatible multiplicity matrices $V,W$ guaranteed by Theorem~\ref{thm:q=2}. Let  $\mu=\1_{r-2}\trans \widetilde{V}\1_k=\1_{r-2}\trans\widetilde{W}\1_\ell$. 
By the proof of Theorem~\ref{thm:q=2}, we have 
$i_+(X)\geq \mu$ and $n-i_+(X)=i_-(X)\geq \mu$, implying $\mu\leq i_+(X)\leq n-\mu$.


To consider the special case $G=\km$ and $H=\kn$, let $\mu(\m,\n)$ denote the minimum value of $\1_{r-2}\trans \widetilde{V}\1_k=\1_{r-2}\trans \widetilde{W}\1_\ell$ over all compatible multiplicity matrices $V$ and $W$ for $\km$ and $\kn$. By the previous paragraph, we have $\mu(\m,\n)\le i_+(X)\le |\m|+|\n|-\mu(\m,\n)$ for any orthogonal $X\in S(\km\vee \kn)$. Moreover, if $V_0 \in \bN_0^{r\times k}$ and $W_0 \in \bN_0^{r\times \ell}$ are compatible multiplicity matrices for $\km$ and $\kn$, then so are any matrices $V \in \bN_0^{r\times k}$ and $W \in \bN_0^{r\times \ell}$ satisfying $\widetilde{V}=\widetilde{V_0}$, $\widetilde{W}=\widetilde{W_0}$,
$(\e_1+\e_{r})\trans V=(\e_1+\e_{r})\trans V_0$ and $(\e_{1}+\e_{r})\trans W=(\e_{1}+\e_{r})\trans W_0$. By the construction of Theorem~\ref{thm:compatible sane}, this implies that for \emph{every} integer $i$ with $\mu(\m,\n)\le i\le  |\m|+|\n|-\mu(\m,\n)$, we have $i=i_+(X)$ for some orthogonal matrix $X\in S(\km\vee\kn)$.

Let  $k \leq \ell$, $\m\in\bN^k$ and $\n\in \bN^{\ell}$, then:
\begin{equation}\label{eq:mu bound}
\ell \leq \mu(\m,\n) \leq \max\{\ell,2k\}.
\end{equation}
The first inequality is clear, and the second one follows by examining the constructions of multiplicity matrices in the proof of Proposition \ref{prop:q2-compatible matrices}, and in the proofs in Subsections~\ref{subsec:no iso} and~\ref{subsec:iso}.

\begin{lemma}\label{lem:k+l}
  Let $k,\ell \in \bN$, $k \leq \ell$, $\m\in \bN^k$, $\n \in \bN^\ell$. If  $k+\ell >|\m|+\isa{\m}$, then 
  $\mu(\m,\n) \geq 2k$.
\end{lemma}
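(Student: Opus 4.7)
The approach is to prove the contrapositive: assuming a pair of compatible multiplicity matrices $V\in\bN_0^{r\times k}$ for $\km$ and $W\in\bN_0^{r\times\ell}$ for $\kn$ exists with $\mu:=\1_{r-2}\trans\widetilde V\1_k<2k$, I would derive $k+\ell\le|\m|+\isa{\m}$. (If no compatible pair exists then $\mu(\m,\n)$ is vacuously $\ge 2k$.) The driving idea is that $\mu<2k$ forces many columns of $\widetilde V$---and, by symmetry, of $\widetilde W$---to be singleton-supported, after which the compatibility condition $\widetilde V\trans\widetilde W>0$ locks all of these supports into a common row index $s^*$, so that the row sum $t_{s^*}$ becomes pinned between easily computable bounds.

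First I would set up the column-sum bookkeeping by letting $L_V:=\{i\in[k]:\text{column $i$ of }\widetilde V\text{ sums to }1\}$ and defining $L_W$ analogously. Since compatibility makes every column of $\widetilde V$ and of $\widetilde W$ nonzero, each column sum is either $1$ or at least $2$, whence $\mu\ge 2k-|L_V|$ and $\mu\ge 2\ell-|L_W|$. Because $\mu<2k\le 2\ell$, both index sets $L_V$ and $L_W$ are nonempty.

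Next I would exploit compatibility structurally. For $i\in L_V$ the column $\widetilde V_{:,i}$ is a basis vector $\e_{s_i}$ with $s_i\in[r-2]$, and similarly $\widetilde W_{:,j}=\e_{s_j^W}$ for $j\in L_W$; then $(\widetilde V\trans\widetilde W)_{ij}>0$ forces $s_i=s_j^W$, so a single index $s^*$ satisfies $\widetilde V_{:,i}=\e_{s^*}$ for all $i\in L_V$ and $\widetilde W_{:,j}=\e_{s^*}$ for all $j\in L_W$. Reapplying the same intersection condition between an arbitrary column of $\widetilde V$ and any column indexed by $L_W$ yields $\widetilde V_{s^*,i}\ge 1$ for every $i\in[k]$, and symmetrically $\widetilde W_{s^*,j}\ge 1$ for every $j\in[\ell]$. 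Using $\widetilde V\1_k=\widetilde W\1_\ell$, the common row sum satisfies $t_{s^*}\ge\max(k,\ell)=\ell$.

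The remaining step is to bound $t_{s^*}$ from above. Since column $V_{:,i}$ sums to $m_i$, has at least two nonzero entries when $m_i\ge 2$ (by Proposition~\ref{prop:mult-complete}), and equals a basis vector when $m_i=1$, the middle entry $\widetilde V_{s^*,i}$ is at most $m_i-1$ in the former case and at most $1$ in the latter; in either case $\widetilde V_{s^*,i}\le\max(1,m_i-1)$. Summing over $i$ gives $t_{s^*}\le\isa{\m}+|\m|-k$, which combined with $t_{s^*}\ge\ell$ yields $k+\ell\le|\m|+\isa{\m}$, contradicting the hypothesis. I expect the main obstacle to be the structural step that pins down the common $\{s^*\}$ and then recycles compatibility to force $\widetilde V_{s^*,i}\ge 1$ in every column index $i\in[k]$; once this is in place, the matching upper and lower bounds on $t_{s^*}$ close the argument cleanly.
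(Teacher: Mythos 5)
Your proof is correct and follows essentially the same route as the paper's: a column-sum pigeonhole forces some column of $\widetilde V$ to be a standard basis vector $\e_{s}$, compatibility then makes row $s$ of $\widetilde W$ nowhere-zero, and the row sum is sandwiched between $\ell$ and $|\m|+\isa{\m}-k$ (the upper bound being exactly Lemma~\ref{lem:rowbound}). The additional structure you develop --- that $L_W$ is nonempty, that all singleton columns share a common index $s^*$, and that row $s^*$ of $\widetilde V$ is also nowhere-zero --- is correct but not needed; the paper closes the argument using only one singleton column of $\widetilde V$.
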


\begin{proof}
  %
  Suppose $\mu(\m,\n)<2k$ and there exist $V,W$ as above with 
  $\1_{r-2}\trans \widetilde V\1_k <2k$. Some column of $\widetilde V$ then has sum less than~$2$, so that column has exactly one non-zero entry, say in row $s$. Since $\widetilde V\trans\widetilde W>0$, this implies that row $s$ of $\widetilde W$ is nowhere-zero. By Lemma \ref{lem:rowbound} and the compatibility of $V$ and $W$, we have $|\m|+\isa{\m}-k \geq \e_s\trans \widetilde V \1_k=\e_s\trans \widetilde W \1_\ell\geq \ell$, a contradiction. So $\mu(\m,\n)\ge 2k$.
\end{proof}

In the next theorem we show that $\mu(\m,\n)$ always reaches one of the two upper bounds given in \eqref{eq:mu bound}.

\begin{theorem}\label{thm:mu}
Let $k\le\ell$, $\m\in\bN^k$ and $\n\in \bN^{\ell}$. If $q(\km \vee \kn)=2$, then
  $$\mu(\m,\n)=\begin{cases}
    2k & \text{if }  |\m|+\isa{\m}-k<\ell<2k,\\
    \ell & \text{otherwise.}
 \end{cases}$$
Consequently, there exists a matrix $A\in S(\km \vee \kn)$ with $q(A)=2$ whose eigenvalues have multiplicities $\{t,|\m|+|\n|-t\}$ if and only if $\mu(\m,\n) \leq t \leq |\m|+|\n|-\mu(\m,\n)$.
\end{theorem}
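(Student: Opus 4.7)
The proof splits into two parts: computing $\mu(\m,\n)$ exactly, then deducing the characterisation of achievable multiplicity pairs. The plan is to obtain the value of $\mu(\m,\n)$ via a short case analysis, pairing the lower bounds already recorded in~(\ref{eq:mu bound}) and Lemma~\ref{lem:k+l} with the explicit multiplicity-matrix constructions assembled in the previous subsections. The multiplicity statement at the end is then essentially a translation of the remark immediately preceding the theorem.

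For the formula for $\mu(\m,\n)$, I would distinguish three subcases. If $\ell\ge 2k$, then $\max\{\ell,2k\}=\ell$, so the two bounds in~(\ref{eq:mu bound}) collapse to give $\mu(\m,\n)=\ell$. If $\ell<2k$ and $k+\ell>|\m|+\isa{\m}$, then Lemma~\ref{lem:k+l} yields $\mu(\m,\n)\ge 2k$, and~(\ref{eq:mu bound}) supplies the matching upper bound $\mu(\m,\n)\le\max\{\ell,2k\}=2k$. The remaining case is $\ell<2k$ together with $k+\ell\le|\m|+\isa{\m}$; here the task is to exhibit compatible multiplicity matrices with middle row-sum $\ell$. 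For this I would invoke Proposition~\ref{prop:q2-compatible matrices}\ref{a1}: since $k\le\ell$, and an isolated component of $\kn$ contributes $1$ to $|\n|$ while each non-isolated component contributes at least $2$, we have $|\n|+\isa{\n}\ge(2\ell-\isa{\n})+\isa{\n}=2\ell\ge k+\ell$, so the hypothesis $k+\ell\le\min\{|\m|+\isa{\m},|\n|+\isa{\n}\}$ is satisfied. Inspection of the explicit $3\times k$ and $3\times\ell$ matrices produced in that proof shows that their single middle rows each sum to $\ell$, so $\mu(\m,\n)\le\ell$, and~(\ref{eq:mu bound}) then forces equality.

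For the multiplicity statement, recall from the remark following Theorem~\ref{thm:q=2} that every $A\in S(\km\vee\kn)$ with $q(A)=2$ has the form $A=\alpha X+\beta I$ for some orthogonal symmetric $X\in S(\km\vee\kn)$, so the multiset of eigenvalue multiplicities of $A$ coincides with $\{i_+(X),\,|\m|+|\n|-i_+(X)\}$. The discussion preceding the theorem already records both the bounds $\mu(\m,\n)\le i_+(X)\le |\m|+|\n|-\mu(\m,\n)$ for every such $X$, and the converse fact that every integer in this range is realised as $i_+(X)$ for some orthogonal symmetric $X\in S(\km\vee\kn)$; the latter uses the freedom to redistribute mass between the top and bottom rows of compatible multiplicity matrices, together with the constructive nature of Theorem~\ref{thm:compatible sane}. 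I expect the main obstacle to be the verification that the $\mu$-minimising pair produced by Proposition~\ref{prop:q2-compatible matrices} in the last subcase really does have middle row-sum equal to $\ell$, and that the estimate $|\n|+\isa{\n}\ge 2\ell$ is always available under the hypothesis $k\le\ell$; once these are in place, all the heavy lifting has already been done elsewhere in the paper.
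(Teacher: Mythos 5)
Your proposal is correct and follows essentially the same route as the paper: the lower bounds come from \eqref{eq:mu bound} and Lemma~\ref{lem:k+l}, the upper bound in the remaining case comes from inspecting the middle rows of the matrices built in the proof of Proposition~\ref{prop:q2-compatible matrices} (whose hypothesis holds since $|\n|+\isa{\n}\ge 2\ell\ge k+\ell$), and the multiplicity claim is read off from the discussion preceding Lemma~\ref{lem:k+l}. Your write-up is merely a little more explicit than the paper's in verifying these two points.
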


\begin{proof}
  If $|\m|+\isa{\m}-k<\ell<2k$, then we have $2k\le \mu(\m,\n)\le\max\{\ell,2k\}=2k$ by Lemma~\ref{lem:k+l} and \eqref{eq:mu bound}, so $\mu(\m,\n)=2k$.
  On the other hand, if $\ell\ge 2k$, then $\mu(\m,\n)=\ell$ by~\eqref{eq:mu bound}, and if $|\m|+\isa{\m}-k\ge \ell$, then the matrix $W$ constructed in the the proof of Proposition~\ref{prop:q2-compatible matrices} shows that $\mu(\m,\n)\le \ell$, so $\mu(\m,\n)=\ell$ by \eqref{eq:mu bound}.
  The final claim regarding multiplicities follows immediately from the discussion before Lemma~\ref{lem:k+l}.
 \end{proof}

Note that both cases in Theorem~\ref{thm:mu} occur, for example when $k=2$ and $\ell=3$ we have $\mu((K_3 \cup K_2) \vee(K_2 \cup 2K_1))=\ell$ and $\mu(2K_2 \vee(K_2 \cup 2K_1))=2k$.

\subsection{Examples}

We conclude this work with two examples, that illustrate the strength of the general result.

%

\begin{example}\label{ex:Km connected}
\label{cor:joinK_m}
For $m \in \bN$ and $\n \in \bN^{\ell}$, we have
$$q(K_m \vee \kn)=\begin{cases}
 2& \text{if $\ell \leq m$},\\
 3& \text{if $\ell>m$}
 \end{cases}$$
 so for $\ell\le m$, we have $\mu((m),\kn)=\ell$ by Theorem~\ref{thm:mu}.
 Considering conditions \ref{g1}, \ref{g2} and~\ref{g3} in Theorem \ref{thm:general}, we get the necessary and sufficient conditions for $q=2$ as follows:
\ref{g1} covers the case when $m\geq 2$ and $\isa{\n}=0$, and demands $\ell \leq m$,
\ref{g2} applies when $m=1$ and in this case we get $\ell=1$,  finally, \ref{g3} gives three conditions for the case $m \geq 2$ and $\isa{\n}>0$: $1+\ell \leq m$, $2 \leq\ell\leq m$ or $\ell \leq 2 \leq |\n|$, that together reduce to  $ \ell \leq m$.

%
\end{example}

Throughout this section we were noting the effect that the number of connected components and isolated points have on $q$. The example below exposes this behaviour on a specific example.
%
%
%

\begin{example}\label{ex:discrete}
Let $a,b,s\in \bN$. Then:
 \[ q(a K_s \vee b K_1)=
    \begin{cases}
      2& \text{if $s=2 \text{ and } b \in \{a,2a\}, \text{ or } s\neq 2 \text{ and } a \leq b \leq sa$},\\
      3&\text{otherwise}
    \end{cases}\]
and  in the cases that $q(aK_s\vee bK_1)=2$, we have
  $\mu((s^a), (1^b))= b$ by Theorem~\ref{thm:mu}.
  
 To see how the formula for $q(aK_s\vee bK_1)$ follows from conditions \ref{g1}, \ref{g2} and \ref{g3} in Theorem~\ref{thm:general}, we split into various cases. If $s=1$, then condition \ref{g2} gives $q=2$ if and only if $a=b$, as required. If $a\ge b$, then condition \ref{g2} gives $q=2$ if and only if $a=b$, as required.
%
%
In the remaining case, when $b>a$ and $s>1$, the necessary and sufficient conditions for $q=2$ are listed in item (c). In the notation of this example, $q=2$ precisely when at least one of the following conditions  holds:
\begin{align*}
a+b \leq as,  \text{ or } 2 a \leq b \leq as, \text{ or } b=2 a.
\end{align*}
If $s=2$, it is apparent that $b=2a$ is the only solution. For $s\ge3$, we get that $q=2$ if any only if $b \in \{a+1,a+2, \ldots, s a\}$.

\end{example}

\section*{Acknowledgments}
Polona Oblak acknowledges the financial support from the Slovenian Research Agency  (research core funding No. P1-0222).

 \bibliographystyle{amsplain}
\bibliography{qqCref}

\end{document}